\def\pasdegrille{\let\grille = \pasgrille}
\def\ecriture#1#2{\setbox1=\hbox{#1}
\dimen1= \wd1
\dimen2=\ht1
\dimen3=\dp1
\grille #2 \box1 }
\def\aat#1#2#3{
\divide \dimen1 by 48
\dimen3=\dimen1
\multiply \dimen1 by #1
\advance \dimen1 by -\dimen3
\divide \dimen1 by 101
\multiply \dimen1 by 100
\divide \dimen2 by \count11
\multiply \dimen2 by #2
\setbox0=\hbox{#3}\ht0=0pt\dp0=0pt
  \rlap{\kern\dimen1 \vbox to0pt{\kern-\dimen2\box0\vss}}\dimen1= \wd1
\dimen2=\ht1}
\def\pasgrille{
\count12= \dimen1
\divide \count12 by 50
\divide \dimen2 by \count12
\count11 =\dimen2
\
\divide \dimen1 by 48
\setlength{\unitlength}{\dimen1}
\smash{\rlap{\ }}
\dimen1= \wd1
\dimen2=\ht1
}
\def\grille{
\count12= \dimen1
\divide \count12 by 50
\divide \dimen2 by \count12
\count11 =\dimen2
\
\divide \dimen1 by 48
\setlength{\unitlength}{\dimen1}
\smash{\rlap{\graphpaper[1](0,0)(50, \count11)}}
\dimen1= \wd1
\dimen2=\ht1
}
\newtheorem{theorem}{Theorem}[section]
\newtheorem{lemma}{Lemma}[section]
\newtheorem{proposition}{Proposition}[section]
\newtheorem{corollary}{Corollary}[section]
\newtheorem{remark}{Remark}[section]
\numberwithin{equation}{section}
\numberwithin{proposition}{section}
\def\11{{\rm 1~\hspace{-1.4ex}l} }
\newcommand{\beq}{\begin{equation}}
\newcommand{\eeq}{\end{equation}}
\newcommand{\ben}{\begin{eqnarray}}
\newcommand{\een}{\end{eqnarray}}
\newcommand{\beno}{\begin{eqnarray*}}
\newcommand{\eeno}{\end{eqnarray*}}
\def\R{\mathbb R}
\def\Z{\mathbb Z}
\def\N{\mathbb N}
\def\ba{\begin{aligned}}
\def\ea{\end{aligned}}
\def\be{\begin{equation}}
\def\ee{\end{equation}}
\def\ben{\begin{align*}}
\def\enn{\end{align*}}
\def\R{\mathbb{R}}
\def\Z{\mathbb{Z}}
\begin{document}
\selectlanguage{english}
\title[Strichartz estimates for wave equation with inverse square
potential] {Strichartz estimates for wave equation with inverse
square potential}


\author{Changxing Miao}
\address{Institute of Applied Physics and Computational Mathematics, P. O. Box 8009, Beijing, China, 100088}
\email{miao\_changxing@iapcm.ac.cn}

\author{Junyong Zhang}
\address{Department of Mathematics, Beijing Institute of Technology, Beijing,
China, 100081} \email{zhang\_junyong@bit.edu.cn}

\author{Jiqiang Zheng}
\address{The Graduate School of China Academy of Engineering Physics, P. O. Box 2101, Beijing, China, 100088}
\email{zhengjiqiang@gmail.com}

\begin{abstract}
In this paper, we study the Strichartz-type estimates of the
solution for the linear wave equation with inverse square potential.
Assuming the initial data possesses additional angular regularity,
especially the radial initial data, the range of admissible pairs is
improved. As an application, we show the global well-posedness of
the semi-linear  wave equation with inverse-square potential
$\partial_t^2 u-\Delta u+\frac{a}{|x|^2}u=\pm|u|^{p-1}u$ for power
$p$ being in some regime when the initial data are radial. This
result extends the well-posedness result in Planchon, Stalker, and
Tahvildar-Zadeh.
\end{abstract}
\maketitle
\selectlanguage{english}
\tableofcontents

\noindent {\bf Mathematics Subject Classification
(2000):}\quad 35Q40, 35Q55, 47J35. \\
\noindent {\bf Keywords:}\quad   Inverse square potential,
Strichartz estimate, Spherical harmonics.

\section{Introduction and Statement of Main Result}
The aim of this paper is to study the $L^q_t(L^r_x)$-type estimates
of the solution for the linear wave equation perturbed by an inverse
square potential. More precisely, we shall consider the following
wave equation with the inverse square potential
\begin{equation}\label{1.1}
\begin{cases}
\partial_t^2 u-\Delta u+\frac{a}{|x|^2}u=0,\qquad
(t,x)\in\R\times\R^n, ~a\in\R,\\ u(t,x)|_{t=0}=u_0(x),\quad
\partial_tu(t,x)|_{t=0}=u_1(x).
\end{cases}
\end{equation}
The scale-covariance elliptic operator $-\Delta+\frac{a}{|x|^2}$
appearing in \eqref{1.1} plays a key role  in many problems of
physics and geometry. The heat and Schr\"odinger flows for the
elliptic operator $-\Delta+\frac{a}{|x|^2}$ have been studied in the
theory of combustion \cite{LZ}, and in quantum mechanics
\cite{KSWW}.  The equation \eqref{1.1} arises in the study of the
wave propagation on conic manifolds \cite{CT}. We refer the readers
to \cite{BPSS,BPSS1,PSS,PSS1} and references therein.\vspace{0.2cm}

It is well known that Strichartz-type estimates are crucial in
handling local and global well-posedness problems of nonlinear
dispersive equations. Along this way, Planchon, Stalker, and
Tahvildar-Zadeh \cite{PSS} first showed a generalized Strichartz
estimates for the equation \eqref{1.1} with radial initial data.
Thereafter, Burq,  Planchon, Stalker, and Tahvildar-Zadeh
\cite{BPSS} removed the radially symmetric assumption in \cite{PSS}
and then obtained some well-posedness results for the semi-linear
wave equation with inverse-square potential. The range of the
admissible exponents $(q,r)$ for the Strichartz estimates of
\eqref{1.1} obtained in \cite{BPSS,PSS} is restricted under
$\frac2q\leq (n-1)(\frac12-\frac1r)$, which is the same as that of
the linear wave equation without potential.  Sterbenz and Rodnianski
\cite{Sterbenz} improved the range of the ``classical" admissible
exponents $(q,r)$ for the linear wave equation with no potential by
compensating a small loss of angular regularity.\vspace{0.2cm}

 In this paper, we
are devoted to study the Strichartz estimates of the solution of the
equation \eqref{1.1}. By employing the asymptotic behavior of the
Bessel function and some fine estimates of Hankel transform, we
improve the range of the admissible pairs $(q,r)$ in \cite{BPSS,PSS}
by compensating a small loss of angular regularity. The machinery we
employ here is mainly based on the spherical harmonics expansion and
some properties of Hankel transform. As an application of the
Strichartz estimates, we obtain well-posedness of \eqref{1.1}
perturbed by nonlinearity $|u|^{p-1}u$ with power
$p_h<p<p_{\text{conf}}$ (defined below) in the radial case, which
extends the well-posedness result in Planchon et al. \cite{PSS}.
\vspace{0.2cm}

Before stating our main theorems, we need some notations. We say the
pair  $(q,r)\in\Lambda$, if $q,r\geq2$, and satisfy
$$\frac1q\geq\frac{n-1}2(\frac12-\frac1r)\quad \text{and}\quad
\frac1q<(n-1)(\frac12-\frac1r).$$ Set the infinitesimal generators
of the rotations on Euclidean space:
\begin{equation*}
\Omega_{j,k}:=x_j\partial_k-x_k\partial_j,
\end{equation*}
and define for $s\in\R$,
\begin{equation*}
\Delta_\theta:=\sum_{j<k}\Omega_{j,k}^2,\quad
|\Omega|^s=(-\Delta_{\theta})^{\frac{s}2}.
\end{equation*}

\begin{theorem}\label{thm} Let $u$ be a  solution of the equation \eqref{1.1} with
$a>\frac1{(n-1)^2}-\frac{(n-2)^2}4$. For any $\epsilon>0$ and
$0<s<1+\min\big\{\frac{n-2}{2}, \sqrt{(\frac{n-2}2)^2+a}\big\}$,

$\bullet$ if $n\geq4$, then
\begin{equation}\label{1.2}
\|u(t,x)\|_{L^q_tL^r_x}\leq
C_\epsilon\Big(\|\langle\Omega\rangle^{\bar{s}} u_0\|_{\dot
H^s}+\|\langle\Omega\rangle^{\bar{s}} u_1\|_{\dot H^{s-1}}\Big),
\end{equation}
where $(q,r)\in \Lambda$, and
$$\bar{s}=(1+\epsilon)\Big(\frac2q-(n-1)\big(\frac12-\frac1r\big)\Big)\quad
\text{and}\quad s=n\Big(\frac12-\frac1r\Big)-\frac1q;$$

$\bullet$ if $n=3$, then
\begin{equation}\label{1.3}
\|u(t,x)\|_{L^q_tL^r_x}\leq
C_\epsilon\Big(\|\langle\Omega\rangle^{\bar{s}} u_0\|_{\dot
H^s}+\|\langle\Omega\rangle^{\bar{s}} u_1\|_{\dot H^{s-1}}\Big),
\end{equation}
where $q\neq 2, (q,r)\in \Lambda$, and
$$\bar{s}=(2+\epsilon)\Big(\frac1q-\big(\frac12-\frac1r\big)\Big)\quad
\text{and}\quad s=3\Big(\frac12-\frac1r\Big)-\frac1q.$$ In addition,
the following estimate holds for $r>4$ and
$s=3(\frac12-\frac1r)-\frac12$,
\begin{equation}\label{1.4}
\|u(t,x)\|_{L^2_tL^r_x}\leq
C_\epsilon\Big(\|\langle\Omega\rangle^{\bar{s}(r)} u_0\|_{\dot
H^s}+\|\langle\Omega\rangle^{\bar{s}(r)} u_1\|_{\dot H^{s-1}}\Big),
\end{equation}
where $\bar{s}(r)=1-\frac2r$ with $r\neq\infty$.
\end{theorem}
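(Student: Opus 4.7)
The plan is to combine a spherical harmonics decomposition with Hankel analysis on each angular sector, in the spirit of Sterbenz--Rodnianski for the free wave equation but now adapted to the Bessel-type radial operator coming from $-\Delta + a/|x|^2$. After Fourier expansion in the angular variable, $u(t,x)=\sum_{k\geq 0}\sum_{\ell} u_{k,\ell}(t,r)Y_{k,\ell}(\omega)$, the equation \eqref{1.1} decouples on each subspace $\mathcal H_k$ of degree-$k$ spherical harmonics into a one-dimensional wave equation whose spatial operator is a Bessel operator $L_{\nu_k}= -\partial_r^2-\frac{n-1}{r}\partial_r+\frac{\mu_k}{r^2}$ with $\nu_k^2=\bigl(k+\tfrac{n-2}{2}\bigr)^2+a$. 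The hypothesis $a>\frac1{(n-1)^2}-\frac{(n-2)^2}{4}$ guarantees $\nu_k\geq \nu_0>\frac{1}{n-1}-\frac12$, so that the Hankel transform of order $\nu_k$ diagonalizes $L_{\nu_k}$ and the solution on each $\mathcal H_k$ is represented through an oscillatory integral involving Bessel kernels $J_{\nu_k}$.

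Next I would set up the microlocalization: a Littlewood--Paley decomposition in the Hankel variable $\rho$ together with the angular decomposition reduces matters to proving frequency-localized Strichartz estimates on $\mathcal H_k$ with constants that grow only mildly in $k$. The core analytic input is the precise asymptotics of $J_{\nu_k}(r)$ for $r\lesssim \nu_k$, $r\sim\nu_k$, and $r\gg\nu_k$, which via stationary phase gives dispersive estimates of the form
\be
\|e^{it\sqrt{L_{\nu_k}}}P_j f\|_{L^\infty_x}\lesssim 2^{jn}(1+2^j|t|)^{-\frac{n-1}{2}}\,\phi(k)\,\|P_jf\|_{L^1_x},
\ee
where $\phi(k)$ captures a polynomial loss in $k$ coming from the large-order Bessel asymptotics. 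Interpolating with the $L^2\to L^2$ bound and applying the standard Keel--Tao machinery yields, for every $(q,r)\in\Lambda$, a Strichartz bound on $\mathcal H_k$ with constant $\lesssim \phi(k)^{\theta(q,r)}$, where $\theta(q,r)$ measures how far $(q,r)$ lies from the classical wave admissible line $\frac2q=(n-1)(\frac12-\frac1r)$. Summing over $k$ using Minkowski's inequality (valid once $r\geq 2$) and Cauchy--Schwarz against $\langle k\rangle^{-1-\epsilon'}$ produces the loss $\bar s=(1+\epsilon)\bigl(\frac2q-(n-1)(\frac12-\frac1r)\bigr)$ of angular regularity, which is exactly the weight $\langle\Omega\rangle^{\bar s}$ appearing on the right-hand side of \eqref{1.2}.

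The main obstacle is obtaining the Bessel kernel estimates with sharp dependence on the order $\nu_k$ and justifying the interpolation between the endpoint angular loss and the classical admissible line; the transition regime $r\sim\nu_k$ requires Airy-type bounds and is where the exponent $1+\epsilon$ (rather than $1$) enters. Once this is in hand, the exponent $s=n(\frac12-\frac1r)-\frac1q$ is fixed by scaling, and the $\dot H^s\times\dot H^{s-1}$ norms are recovered by the equivalence of the Sobolev norm defined through $-\Delta+a/|x|^2$ and the usual one on each $\mathcal H_k$, which holds under our assumption on $a$.

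For the three-dimensional statements \eqref{1.3} and \eqref{1.4}, the same scheme applies but with the loss re-indexed by $\bar s=(2+\epsilon)(\frac1q-(\frac12-\frac1r))$: the exponent $2+\epsilon$ reflects that in $n=3$ the half-wave decay is only $|t|^{-1}$ and one interpolation degree is lost. The endpoint $q=2$ must be excluded from \eqref{1.3} by the usual Keel--Tao obstruction, but can be recovered for $r>4$ as \eqref{1.4} by a separate argument: a direct $TT^*$ computation on each $\mathcal H_k$ together with the $L^2_t L^r_x$ local smoothing estimates available for Bessel-type operators, where the extra gain $\bar s(r)=1-\frac2r$ comes from summing $\phi(k)^{1-2/r}$ in $k$. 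This is the technically heaviest piece since the $q=2$ endpoint is sensitive to the exact kernel bound on the diagonal regime $r\sim\nu_k$.
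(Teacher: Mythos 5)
Your setup --- spherical harmonics decomposition, Hankel transform of order $\nu(k)=\sqrt{(k+\frac{n-2}{2})^2+a}$ diagonalizing the radial operator, Littlewood--Paley in the Hankel variable, summation over angular blocks --- matches the paper's. The gap is in the core analytic mechanism. You propose an $L^1\to L^\infty$ dispersive bound with decay $(1+2^j|t|)^{-\frac{n-1}{2}}$ on each harmonic subspace (with a polynomial loss $\phi(k)$) followed by Keel--Tao. But Keel--Tao (or the $TT^*$ plus Hardy--Littlewood--Sobolev argument) with decay rate $\sigma=\frac{n-1}{2}$ yields only the classical condition $\frac{2}{q}\leq(n-1)(\frac12-\frac1r)$, whereas every pair in $\Lambda$ satisfies $\frac1q\geq\frac{n-1}{2}(\frac12-\frac1r)$, i.e. lies on the non-classical side of that line. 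No choice of the $k$-dependent constant $\phi(k)$ can repair this: the obstruction sits in the $t$-integrability of the $TT^*$ kernel, not in the size of the constant. The radial/angular gain is invisible in the pointwise kernel bound (the kernel still peaks at size $|t|^{-\frac{n-1}{2}}$ near $r\sim1$, $r'\sim|t|$); it only appears in averaged form.

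What the paper uses instead of a dispersive estimate is a local-smoothing-type bound: Plancherel in $t$ reduces everything to the uniform estimate $\int_R^{2R}|J_\nu(r)|^2\,\mathrm{d}r\leq C$ (independent of $\nu$ and $R$) together with $|J'_\nu(r)|\lesssim r^{-1/2}$ (Lemmas \ref{Bessel} and \ref{Bessel2}), which give Propositions \ref{Hankel1}--\ref{Hankel2} and hence the unit-frequency estimate $\|u\|_{L^2_tL^{\gamma}_{r^{n-1}\mathrm{d}r}L^2_\theta}\lesssim\|u_0\|_{L^2}$ with $\gamma=\frac{2(n-1)}{n-2}+$ (Proposition \ref{pro}); this single $q=2$ estimate already lives beyond the classical line. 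The theorem then follows by (i) the Bernstein inequality \eqref{Bernstein} on the sphere to pass from $L^2_\theta$ to $L^{\gamma_0}_\theta$ at the cost of a power of the angular frequency $N$, (ii) interpolation with the loss-free classical Strichartz estimates of Burq--Planchon--Stalker--Tahvildar-Zadeh (Lemmas \ref{stri1}--\ref{stri}) --- it is this interpolation, not a Cauchy--Schwarz sum of $\phi(k)^{\theta}$, that produces the exact exponent $\bar s=(1+\epsilon)(\frac2q-(n-1)(\frac12-\frac1r))$ --- and (iii) the square-function Lemmas \ref{square-expression}--\ref{square-expression2} plus scaling and the almost-orthogonality Lemma \ref{orthogonality} to assemble the dyadic pieces into $\dot H^s$. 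The $\epsilon$ arises because $\gamma$ must be taken strictly above $\frac{2(n-1)}{n-2}$, not from Airy-type bounds in the transition regime $r\sim\nu_k$. If you replace your dispersive/Keel--Tao step by this $L^2_t$-based estimate, the rest of your outline (angular summation, scaling to fix $s$) can be made to go through.
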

\begin{remark}

i$)$. We remark that some of admissible pairs $(q,r)$ in Theorem 1.1
are out of the region $ACDO$ or $ACO$ $($in the following figures$)$
obtained in \cite {BPSS,PSS}.

ii$)$. Our restriction $a>a_n:=\frac1{(n-1)^2}-\frac{(n-2)^2}4$ is
to extend the the range of $(q,r)$ as widely as possible. We remark
that $a_3=0$ and $a_n<0$ for $n\geq4$. Therefore, we recover the
result of Theorem 1.5 in Sterbenz \cite{Sterbenz}, which considers
$a=0$ and $n\geq4$.

iii$)$. In the extended region $\Lambda$ $($see the below
figures$)$, the loss of angular regularity is
$\bar{s}=(1+\epsilon)(\frac2q-(n-1)(\frac12-\frac1r))$. When $n=3$,
 the loss of angular
regularity in the line $BC$ is $\bar{s}(r)>\bar{s}$, since the
Strichartz estimate fails at the endpoint $(q,r,n)=(2,\infty,3)$. It
seems that the methods we use here are not available to obtain such
estimate at endpoint since Lemma \ref{square-expression} and Lemma
\ref{square-expression2} fail at $r=\infty$. And one might need the
wave packet method of Wolff \cite{Wolff} and the argument in Tao
\cite{Tao4} to obtain the Strichartz estimate at the endpoint
$(q,r,n)=(2,\infty,3)$ with some loss of angular regularity.

\end{remark}
%

\begin{figure}[ht]
\begin{center}
$$\ecriture{\includegraphics[width=6cm]{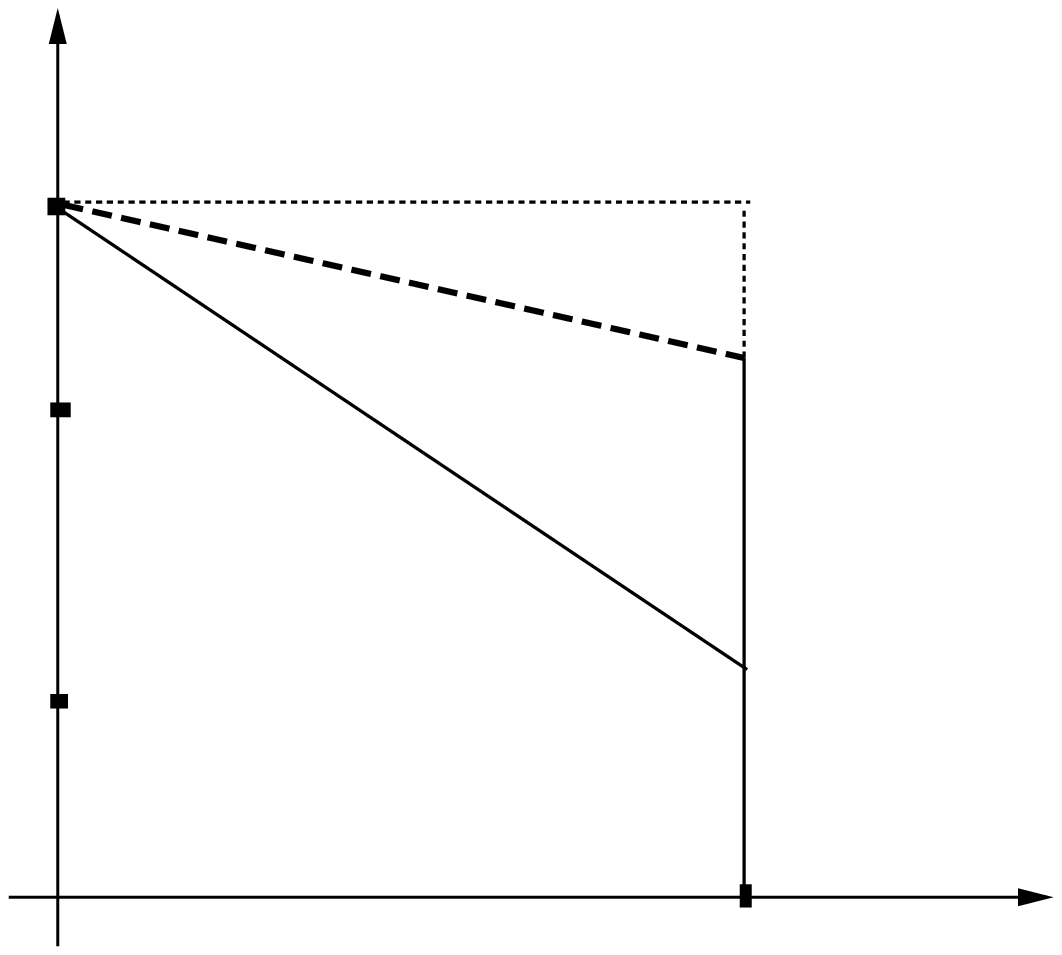}}
{\aat{1}{1}{\small
O}\aat{-4}{12}{$\tfrac{n-3}{2(n-1)}$}\aat{-4}{25}{$\tfrac{n-2}{2(n-1)}$}
\aat{-2}{34}{$\tfrac12$}\aat{-2}{42}{$\tfrac1r$} \aat{6}{37}{\small
A} \aat{16}{-1}{\small
$n>3$}\aat{34}{-1}{$\tfrac12$}\aat{48}{-1}{$\tfrac1q$}\aat{37}{6}{\small
D}\aat{35}{13}{\small C}\aat{35}{27}{\small B}\aat{30}{20}{\small
$\Lambda$}}\qquad \ecriture{\includegraphics[width=6cm]{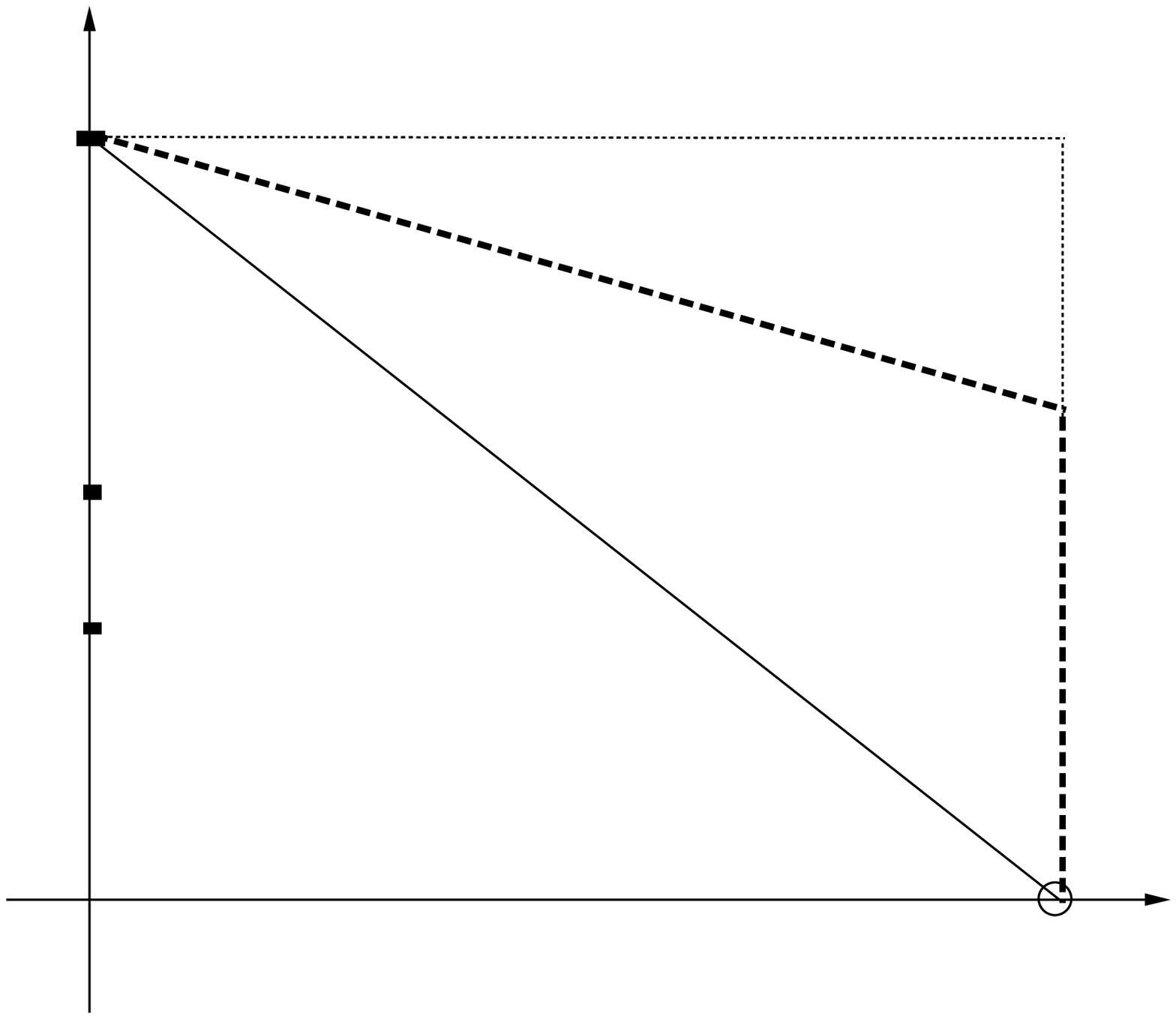}}
{\aat{1}{2}{\small
O}\aat{0}{16}{$\tfrac16$}\aat{0}{22}{$\tfrac14$}\aat{0}{36}{$\tfrac12$}\aat{0}{42}{$\tfrac1r$}
\aat{7}{38}{\small A} \aat{22}{3}{\small
$n=3$}\aat{44}{2}{$\tfrac12$}\aat{48}{2}{$\tfrac1q$}\aat{46}{6}{\small
C}\aat{46}{25}{\small B}\aat{30}{20}{\small $\Lambda$}}$$
\end{center}
\end{figure}

\vspace{0.2cm}
 As a consequence of Theorem \ref{thm} and Corollary
3.9 in \cite{PSS}, we have the following Strichartz estimates for
radial initial data:
\begin{corollary}\label{cor} Let $n\geq3$ and $s<\frac n2$. Suppose $(u_0, u_1)$ are radial functions, then for $q,r\geq2, \frac 1q<(n-1)(\frac12-\frac1r)$
and $s=n(\frac12-\frac1r)-\frac1q$, the solution $u$ of the equation
\eqref{1.1} with $a>\frac1{(n-1)^2}-\frac{(n-2)^2}4$ satisfies
\begin{equation}\label{1.5}
\|u(t,x)\|_{L^q_tL^r_x}\leq C\Big(\|u_0\|_{\dot H^s}+\|u_1\|_{\dot
H^{s-1}}\Big).
\end{equation}
\end{corollary}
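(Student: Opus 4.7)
The plan is to combine Theorem~\ref{thm} with Corollary~3.9 of Planchon--Stalker--Tahvildar-Zadeh \cite{PSS} via the single observation that the angular operators act trivially on radial data. Every rotation generator $\Omega_{j,k}=x_j\partial_k-x_k\partial_j$ annihilates any radial function $f$, so $\Delta_\theta f=0$ and $\langle\Omega\rangle^\sigma f=f$ for every $\sigma\geq 0$. Hence, whenever Theorem~\ref{thm} is applied to radial initial data, each factor $\langle\Omega\rangle^{\bar{s}}$ or $\langle\Omega\rangle^{\bar{s}(r)}$ on the right-hand sides of \eqref{1.2}--\eqref{1.4} reduces to the identity, and those estimates collapse to \eqref{1.5}.

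It then remains to cover the full admissibility region stated in the corollary, namely all pairs $(q,r)$ with $q,r\geq 2$, $\frac{1}{q}<(n-1)(\frac{1}{2}-\frac{1}{r})$, under the wave scaling $s=n(\frac{1}{2}-\frac{1}{r})-\frac{1}{q}$. I would split this region along the classical wave-admissibility line $\frac{1}{q}=\frac{n-1}{2}(\frac{1}{2}-\frac{1}{r})$. For $(q,r)$ on or below this line, the pair is classically wave-admissible and Corollary~3.9 of \cite{PSS} directly yields \eqref{1.5}. For $(q,r)$ strictly above the classical line but strictly below the outer line $\frac{1}{q}=(n-1)(\frac{1}{2}-\frac{1}{r})$, the pair lies in $\Lambda$; applying \eqref{1.2} when $n\geq 4$, or \eqref{1.3} when $n=3$ and $q>2$, together with the radial observation, produces \eqref{1.5}.

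The only leftover case is $n=3$ with $q=2$, which is excluded from \eqref{1.3}. The defining constraint $\frac{1}{q}<(n-1)(\frac{1}{2}-\frac{1}{r})=1-\frac{2}{r}$ forces $r>4$, so \eqref{1.4} applies; once more the $\langle\Omega\rangle^{\bar{s}(r)}$ factor disappears on radial data and we obtain \eqref{1.5}. I do not anticipate any real obstacle here: the argument is essentially the radial triviality of $\Omega$ combined with a covering of the admissibility region by the estimates already at our disposal, with \eqref{1.4} providing the one piece needed to fill in the $n=3$, $q=2$ slice.
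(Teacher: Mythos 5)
Your argument is correct and is exactly the route the paper takes: the corollary is stated there as an immediate consequence of Theorem~\ref{thm} and Corollary~3.9 of \cite{PSS}, with the angular weights $\langle\Omega\rangle^{\bar s}$, $\langle\Omega\rangle^{\bar s(r)}$ acting as the identity on radial data. Your explicit covering of the admissibility region (classical range via \cite{PSS}, the region $\Lambda$ via \eqref{1.2}--\eqref{1.3}, and the $n=3$, $q=2$, $r>4$ slice via \eqref{1.4}) simply spells out what the paper leaves implicit.
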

As an application, we obtain some well-posedness result of the
following semi-linear wave equation,
\begin{equation}\label{1.6}
\begin{cases}
\partial_t^2 u-\Delta u+\frac{a}{|x|^2}u=\pm|u|^{p-1}u,\qquad
(t,x)\in\R\times\R^n, ~a\in\R,\\
u(t,x)|_{t=0}=u_0(x),\quad
\partial_tu(t,x)|_{t=0}=u_1(x).
\end{cases}
\end{equation}

In the case of the semi-linear wave equation without potential (i.e.
$a=0$), there are many exciting results on the global existence and
blow-up. We refer the readers to \cite{LS,Sogge2} and references
therein. While for the equation \eqref{1.6} with $p\geq
p_{\text{conf}}:=1+\frac4{n-1}$ and $n\geq3$, Planchon et al.
\cite{PSS} established the global existence when the radial initial
data is small in $\dot H^{s_c}\times \dot H^{s_c-1}$-norm with
$s_c:=\frac{n}2-\frac2{p-1}$. Thereafter, Burq et al. \cite{BPSS}
removed the radially symmetric assumption on the initial data. As a
consequence of Theorem \ref{thm}, we prove the global existence of
the solution to the equation \eqref{1.6} with
$p_{\text{h}}:=1+\frac{4n}{(n+1)(n-1)}<p<p_{\text{conf}}$ for small
radial initial data $(u_0,u_1)\in \dot H^{s_c}\times \dot H^{s_c-1}
$.

\begin{theorem}\label{thm1} Let $n\geq3$ and $p_{\text{h}}<p<p_{\text{conf}}$.
Let $q_0=(p-1)(n+1)/2$, $r_0=(n+1)(p-1)/(2p)$, and
\begin{equation}\label{1.7}
a>\max\Big\{\frac1{(n-1)^2}-\frac{(n-2)^2}4, \frac n{q_0}\Big(\frac
n{q_0}-n+2\Big),\big(\frac n{r_0}-n\big)\big(\frac
n{r_0}-2\big)\Big\}. \end{equation} Assume $(u_0,u_1)$ are radial
functions and there is a small constant $\epsilon(p)$ such that
\begin{equation}\label{1.8}
\|u_0\|_{\dot H^{s_c}}+\|u_1\|_{\dot H^{s_c-1}}<\epsilon(p),
\end{equation}
then there exists a unique global solution $u$ to \eqref{1.6}
satisfying
\begin{equation}\label{1.9} u\in C_t(\R;\dot H^{s_c})\cap
L^{q_0}_{t,x}(\R\times\R^n).\end{equation}
\end{theorem}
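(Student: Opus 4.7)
The plan is a standard Banach fixed-point argument on the Duhamel formulation
$$\Phi(u)=\cos(t\sqrt{\mathcal L_a})u_0+\tfrac{\sin(t\sqrt{\mathcal L_a})}{\sqrt{\mathcal L_a}}u_1\pm\int_0^t\tfrac{\sin((t-s)\sqrt{\mathcal L_a})}{\sqrt{\mathcal L_a}}(|u|^{p-1}u)(s)\,ds,$$
with $\mathcal L_a=-\Delta+a|x|^{-2}$, carried out in the radial subspace of the ball $X=\{u\in L^{q_0}_{t,x}(\R\times\R^n):\|u\|_{L^{q_0}_{t,x}}\le 2C\epsilon(p)\}$. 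The whole argument rests on invoking the sharper radial Strichartz estimate of Corollary \ref{cor} at two carefully chosen admissible pairs.

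\textbf{The two Strichartz inputs.} A short algebraic check shows that the diagonal pair $(q_0,q_0)$ lies in $\Lambda$ with scaling $s_c=n(\tfrac12-\tfrac1{q_0})-\tfrac1{q_0}$: the identity holds because $q_0=(n+1)(p-1)/2$, while the strict admissibility $1/q_0<(n-1)(1/2-1/q_0)$ collapses to $q_0>2n/(n-1)$, which is precisely $p>p_h$. Similarly the dual partner $(r_0',r_0')$ with $r_0=q_0/p$ belongs to $\Lambda$ exactly because $p<p_{\text{conf}}$ forces $r_0<2n/(n+1)$, and a direct computation shows its Sobolev index is $\tilde s=1-s_c$. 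The threshold on $a$ in \eqref{1.7} is designed so that, at the indices $s_c,\,s_c-1$ and their dual counterparts, the fractional norms built from $\mathcal L_a$ and from $-\Delta$ are equivalent via a generalised Hardy inequality, so the hypotheses of Theorem \ref{thm1} can be phrased in terms of the usual $\dot H^{s_c}$ norm. Corollary \ref{cor} then yields the linear bound
$$\|\cos(t\sqrt{\mathcal L_a})u_0+\tfrac{\sin(t\sqrt{\mathcal L_a})}{\sqrt{\mathcal L_a}}u_1\|_{L^{q_0}_{t,x}}\lesssim\|u_0\|_{\dot H^{s_c}}+\|u_1\|_{\dot H^{s_c-1}},$$
and the paired estimate at $(r_0',r_0')$, combined with a $TT^{*}$/duality argument and the Christ--Kiselev lemma (applicable since $r_0<q_0$ strictly), produces the retarded Duhamel estimate
$$\bigl\|\int_0^t\tfrac{\sin((t-s)\sqrt{\mathcal L_a})}{\sqrt{\mathcal L_a}}F(s)\,ds\bigr\|_{L^{q_0}_{t,x}}\lesssim\|F\|_{L^{r_0}_{t,x}}.$$

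\textbf{Contraction, globalization, and main obstacle.} Combining these two bounds with the identity $\||u|^{p-1}u\|_{L^{r_0}_{t,x}}=\|u\|_{L^{q_0}_{t,x}}^p$ gives $\|\Phi(u)\|_{L^{q_0}_{t,x}}\le C(\|u_0\|_{\dot H^{s_c}}+\|u_1\|_{\dot H^{s_c-1}})+C\|u\|_{L^{q_0}_{t,x}}^p$, together with a matching Lipschitz estimate; choosing $\epsilon(p)$ so that $C(2C\epsilon)^{p-1}<1/2$ makes $\Phi$ a contraction on $X$, and its unique fixed point is automatically global in $t$. Since $\mathcal L_a$ and the power nonlinearity commute with rotations, radial initial data produce radial iterates, so Corollary \ref{cor} is legitimate at every step; the continuity $u\in C_t(\R;\dot H^{s_c})$ is read off the Duhamel formula by pairing the $L^{r_0}_{t,x}$-smallness of $|u|^{p-1}u$ with the $L^\infty_t\dot H^{s_c}$ endpoint Strichartz bound. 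The delicate point I expect to be the main obstacle is the inhomogeneous estimate of the second paragraph: one must verify that radiality is genuinely preserved through the $TT^*$/duality and Christ--Kiselev manipulations so that Corollary \ref{cor} applies on both sides, check that $p<p_{\text{conf}}$ provides enough strict room for $(r_0',r_0')$ to sit inside $\Lambda$ away from the Keel--Tao endpoint, and confirm that the precise threshold on $a$ in \eqref{1.7} is exactly what is required to let $(-\Delta)$-based Sobolev norms (rather than $\mathcal L_a$-adapted ones) appear on the data side of the final inequality.
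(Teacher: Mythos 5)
Your fixed-point scheme, function space, and treatment of the homogeneous part coincide with the paper's, but your derivation of the key inhomogeneous estimate $\|\int_0^t\frac{\sin((t-s)\sqrt{P_a})}{\sqrt{P_a}}F\,ds\|_{L^{q_0}_{t,x}}\lesssim\|F\|_{L^{r_0}_{t,x}}$ is genuinely different. The paper (Proposition 5.1) does not touch the extended radial admissibility at all for this step: it conjugates by the intertwining operators $\mathcal{K}^0_{\lambda,\nu}=\mathcal{H}_\lambda\mathcal{H}_\nu$, which turn $\partial_t^2+A_{\nu}$ into the free wave operator, and then invokes the Harmse--Oberlin non-admissible inhomogeneous estimates $\|u\|_{L^{q}_{t,x}}\lesssim\|\Box u\|_{L^{r}_{t,x}}$ for $\frac1r-\frac1q=\frac2{n+1}$, $\frac{2n}{n-1}<q<\frac{2(n+1)}{n-1}$ (this window is exactly the source of $p_h<p<p_{\mathrm{conf}}$). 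You instead run a $TT^*$/duality/Christ--Kiselev argument off two homogeneous radial Strichartz estimates at $(q_0,q_0)$ and $(r_0',r_0')$, both of which do lie in the widened radial range of Corollary \ref{cor} (your numerology $\tilde s=1-s_c$ and the checks $q_0>\frac{2n}{n-1}$, $r_0'>\frac{2n}{n-1}$ are correct). This route is viable for radial data --- the duality step survives because the $\dot H^{-\tilde s}$ norm of a radial function is realized by pairing against radial test functions, and $r_0<2<q_0$ gives the strict inequality Christ--Kiselev needs --- at the cost of two external ingredients the paper never uses: the Christ--Kiselev lemma and the equivalence $\|P_a^{s/2}f\|_{L^2}\sim\||\nabla|^sf\|_{L^2}$ for $s\in\{s_c,1-s_c\}\subset(0,1)$ (true by Hardy's inequality and interpolation, but you should say so rather than wave at it). What each approach buys: yours stays entirely within the radial Strichartz theory of the paper and avoids the non-admissible free-wave estimates; the paper's avoids any admissibility discussion for the dual pair and any time-ordering lemma.

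One point you get wrong as a matter of interpretation, though it does not invalidate your proof. The second and third constraints on $a$ in \eqref{1.7} are \emph{not} there to make $P_a$-based and $(-\Delta)$-based Sobolev norms equivalent (for $L^2$-based norms with $s\in(0,1)$ that equivalence already holds for every $a>-\frac{(n-2)^2}{4}$). They are precisely the conditions of Lemma \ref{continuous} guaranteeing that $\mathcal{K}^0_{\nu,\lambda}$ is bounded on $L^{q_0}(\R^n)$ and $\mathcal{K}^0_{\lambda,\nu}$ on $L^{r_0}(\R^n)$, which is the engine of the paper's Proposition 5.1: e.g.\ $\frac{\lambda-\nu}{n}<\frac1{q_0}$ with $\lambda=\frac{n-2}2$, $\nu=\sqrt{(\frac{n-2}2)^2+a}$ unwinds to $a>\frac n{q_0}(\frac n{q_0}-n+2)$. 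In your scheme these two constraints would simply be unused hypotheses, which is harmless for proving the theorem as stated but signals that you have reconstructed a different proof, not the paper's.
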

\begin{remark}
i$)$. The above result extends the well-posedness result in
\cite{PSS} from $p\geq p_{\text{conf}}$ to $p_{\text{h}}<p<
p_{\text{conf}}$.

ii$)$. We remark that the $L^1$-bound of the operator
$\mathcal{K}_{\lambda,\nu}^0$ defined below is the source of our
constraint to $p>p_h$. Inspired by the arguments in
Lindblad-Sogge\cite{LS1,Sogge2} for the usual semi-linear wave
equation, if we want to extend the above result to $p>p_{\text{c}}$,
one needs to explore new inhomogeneous Strichartz estimates since
the operator $\mathcal{K}_{\lambda,\nu}^0$ is not known as a bounded
operator on $L^1$. Here $p_c$ is the positive root of
$(n-1)p_c^2-(n+1)p_c-2=0$, and $p_c$ is called the Strauss's index.
\end{remark}

This paper is organized as follows: In the section 2, we revisit the
property of the Bessel functions, harmonic projection operator, and
the Hankel transform associated with $-\Delta+\frac{a}{|x|^2}$.
Section 3 is devoted to establishing some estimates of the Hankel
transform. In Section 4, we use the previous estimates to prove
Theorem \ref{thm}. We show Theorem \ref{thm1}  in Section 5. In the
appendix, we sketch the proof of Lemma \ref{square-expression} by
using a weak-type $(1,1)$ estimate for the multiplier operators with
respect to the Hankel transform.\vspace{0.2cm}

Finally, we conclude this section by giving some notations which
will be used throughout this paper. We use $A\lesssim B$ to denote
the statement that $A\leq CB$ for some large constant $C$ which may
vary from line to line and depend on various parameters, and
similarly use $A\ll B$ to denote the statement $A\leq C^{-1} B$. We
employ $A\sim B$ to denote the statement that $A\lesssim B\lesssim
A$. If the constant $C$ depends on a special parameter other than
the above, we shall denote it explicitly by subscripts. We briefly
write $A+\epsilon$ as $A+$ for $0<\epsilon\ll1$. Throughout this
paper, pairs of conjugate indices are written as $p, p'$, where
$\frac{1}p+\frac1{p'}=1$ with $1\leq p\leq\infty$.\vspace{0.2cm}

\section{Preliminary}

In this section, we provide some standard facts about the Hankel
transform and the Bessel functions. We use the oscillatory integral
argument to show the asymptotic behavior of the derivative of the
Bessel function. The Littlewood-Paley theorems associated to the
Hankel transform are collected in this section. Finally we prove a
Stirchartz estimate for unit frequency by making use of some results
in \cite{BPSS}.\vspace{0.2cm}

\subsection{Spherical harmonic expansions and the Bessel functions}
We begin with the spherical harmonics expansion formula. For more
details, we refer to Stein-Weiss \cite{SW}. Let
\begin{equation}\label{2.1}
\xi=\rho \omega \quad\text{and}\quad x=r\theta\quad\text{with}\quad
\omega,\theta\in\mathbb{S}^{n-1}.
\end{equation}

For any $g\in L^2(\R^n)$, we have the expansion formula
\begin{equation*}
g(x)=\sum_{k=0}^{\infty}\sum_{\ell=1}^{d(k)}a_{k,\ell}(r)Y_{k,\ell}(\theta)
\end{equation*}
where
\begin{equation*}
\{Y_{k,1},\ldots, Y_{k,d(k)}\}
\end{equation*}
is the orthogonal basis of the spherical harmonic space of degree
$k$ on $\mathbb{S}^{n-1}$, called $\mathcal{H}^{k}$, with the
dimension
\begin{equation*}
d(0)=1\quad\text{and}\quad
d(k)=\frac{2k+n-2}{k}C^{k-1}_{n+k-3}\simeq \langle k\rangle^{n-2}.
\end{equation*}
We remark that for $n=2$, the dimension of $\mathcal{H}^{k}$ is a
constant independent of $k$. We have the orthogonal decomposition
\begin{equation*}
L^2(\mathbb{S}^{n-1})=\bigoplus_{k=0}^\infty \mathcal{H}^{k}.
\end{equation*}
This gives by orthogonality
\begin{equation}\label{2.2}
\|g(x)\|_{L^2_\theta}=\|a_{k,\ell}(r)\|_{\ell^2_{k,\ell}}.
\end{equation}
By Theorem 3.10 in \cite{SW}, we have the Hankel transforms formula
\begin{equation}\label{2.3}
\hat{g}(\rho\omega)=\sum_{k=0}^{\infty}\sum_{\ell=1}^{d(k)}2\pi
i^{k}Y_{k,\ell}(\omega)\rho^{-\frac{n-2}2}\int_0^\infty
J_{k+\frac{n-2}2}(2\pi r\rho)a_{k,\ell}(r)r^{\frac n2}\mathrm{d}r,
\end{equation}
here the Bessel function $J_k(r)$ of order $k$ is defined by
\begin{equation*}
J_k(r)=\frac{(r/2)^k}{\Gamma(k+\frac12)\Gamma(1/2)}\int_{-1}^{1}e^{isr}(1-s^2)^{(2k-1)/2}\mathrm{d
}s\quad\text{with}~ k>-\frac12~\text{and}~ r>0.
\end{equation*}

A simple computation gives the estimates
\begin{equation}\label{2.4}
|J_k(r)|\leq
\frac{Cr^k}{2^k\Gamma(k+\frac12)\Gamma(1/2)}\big(1+\frac1{k+1/2}\big),
\end{equation}
and
\begin{equation}\label{2.5} |J'_k(r)|\leq
\frac{C
(kr^{k-1}+r^k)}{2^k\Gamma(k+\frac12)\Gamma(1/2)}\big(1+\frac1{k+1/2}\big),
\end{equation}
where $C$ is a constant and these estimates will be used when
$r\lesssim1$. Another well known asymptotic expansion about the
Bessel function is
\begin{equation*}
J_k(r)=r^{-1/2}\sqrt{\frac2{\pi}}\cos(r-\frac{k\pi}2-\frac{\pi}4)+O_{k}(r^{-3/2}),\quad
\text{as}~ r\rightarrow\infty,
\end{equation*}
but with a constant depending on $k$ (see \cite{SW}). As pointed out
in \cite{Stein1}, if one seeks a uniform bound for large $r$ and
$k$, then the best one can do is $|J_k(r)|\leq C r^{-\frac13}$. To
investigate the behavior of asymptotic on $k$ and $r$, we are
devoted to Schl\"afli's integral representation \cite{Watson} of the
Bessel function: for $r\in\R^+$ and $k>-\frac12$,
\begin{equation}\label{2.6}
\begin{split}
J_k(r)&=\frac1{2\pi}\int_{-\pi}^\pi
e^{ir\sin\theta-ik\theta}\mathrm{d}\theta-\frac{\sin(k\pi)}{\pi}\int_0^\infty
e^{-(r\sinh s+ks)}\mathrm{d}s\\&:=\tilde{J}_k(r)-E_k(r).
\end{split}
\end{equation}
We remark that $E_k(r)=0$ for $k\in\Z^+$. One easily estimates for
$r>0$
\begin{equation}\label{2.7}
|E_k(r)|=\Big|\frac{\sin(k\pi)}{\pi}\int_0^\infty e^{-(r\sinh
s+ks)}\mathrm{d}s\Big|\leq C (r+k)^{-1}.
\end{equation}

Next, we recall the properties of Bessel function $J_k(r)$ in
\cite{Stempak,Stein1}, the readers can also refer to \cite{MZZ1} for
the detailed proof.
\begin{lemma}[Asymptotics of the Bessel function] \label{Bessel} Assume $k\gg1$. Let $J_k(r)$ be
the Bessel function of order $k$ defined as above. Then there exist
a large constant $C$ and small constant $c$ independent of $k$ and
$r$ such that:

$\bullet$ when $r\leq \frac k2$
\begin{equation}\label{2.8}
\begin{split}
|J_k(r)|\leq C e^{-c(k+r)};
\end{split}
\end{equation}

$\bullet$ when $\frac k 2\leq r\leq 2k$
\begin{equation}\label{2.9}
\begin{split}
|J_k(r)|\leq C k^{-\frac13}(k^{-\frac13}|r-k|+1)^{-\frac14};
\end{split}
\end{equation}

$\bullet$ when $r\geq 2k$
\begin{equation}\label{2.10}
\begin{split}
 J_k(r)=r^{-\frac12}\sum_{\pm}a_\pm(r) e^{\pm ir}+E(r),
\end{split}
\end{equation}
where $|a_\pm(r)|\leq C$ and $|E(r)|\leq Cr^{-1}$.
\end{lemma}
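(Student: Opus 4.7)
The plan is to start from Schl\"afli's integral representation \eqref{2.6}, write $J_k(r)=\tilde J_k(r)-E_k(r)$, and handle the remainder $E_k$ via the uniform bound \eqref{2.7}. The problem then reduces to analyzing the oscillatory integral
\[
\tilde J_k(r)=\frac{1}{2\pi}\int_{-\pi}^{\pi}e^{i\phi(\theta)}\,\mathrm{d}\theta,\qquad \phi(\theta):=r\sin\theta-k\theta,
\]
whose critical points satisfy $\cos\theta=k/r$. This equation has no real solutions when $r<k$, develops a degenerate solution $\theta=0$ precisely at $r=k$ (where $\phi'(0)=\phi''(0)=0$ but $\phi'''(0)=-r\neq 0$), and has two simple nondegenerate critical points $\theta_\pm=\pm\arccos(k/r)$ when $r>k$. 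The three regimes in the statement correspond exactly to these three situations.

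For the first regime $r\leq k/2$, I would invoke Debye's steepest-descent method: shift the contour $[-\pi,\pi]$ down to the horizontal line $\Im\theta=-\tau_\ast$, where $\tau_\ast>0$ is the saddle value determined by $\cosh\tau_\ast=k/r$. At this saddle $\Re(i\phi)=-(k\tau_\ast-\sqrt{k^2-r^2})\leq -c(k+r)$ uniformly in $r\leq k/2$, which bounds the integrand on the shifted contour by $Ce^{-c(k+r)}$. The two vertical-segment contributions at $\theta=\pm\pi$ combine into precisely $-2\pi E_k(r)$ after invoking the identity $i(e^{ik\pi}-e^{-ik\pi})=-2\sin(k\pi)$ together with the definition \eqref{2.6} of $E_k$, so they exactly cancel the $-E_k(r)$ term in $J_k(r)=\tilde J_k(r)-E_k(r)$ and leave $J_k(r)$ estimated by the horizontal shifted contour, yielding \eqref{2.8}.

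For the third regime $r\geq 2k$, the critical points $\theta_\pm$ lie in a compact subset of $(-\pi/2,\pi/2)$ bounded away from $0$ and from $\pm\pi$, and $|\phi''(\theta_\pm)|=r|\sin\theta_\pm|=\sqrt{r^2-k^2}\gtrsim r$ is nondegenerate. A smooth partition of unity isolates the two critical points from the nonstationary complement; integration by parts on the complement produces a remainder of size $r^{-1}$, while the classical nondegenerate stationary phase formula at $\theta_\pm$ yields the two oscillatory main terms $r^{-1/2}a_\pm(r)e^{\pm ir}$ (the exact phase values $\pm[\sqrt{r^2-k^2}-k\arccos(k/r)]$ are decomposed into $\pm r$ plus $k$-dependent pieces absorbed into the amplitudes $a_\pm$), recovering \eqref{2.10}.

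The genuine difficulty is the intermediate regime $k/2\leq r\leq 2k$, which passes through the degenerate case $r=k$. I would parametrize this regime by $\delta:=k^{-1/3}|r-k|$ and split into two subcases. When $\delta\lesssim 1$, the critical point is close to $0$: Taylor expand $\phi(\theta)=(r-k)\theta-\tfrac{r}{6}\theta^3+O(r\theta^5)$, rescale $\theta=k^{-1/3}\eta$, and compare with the uniformly bounded Airy integral to obtain $|\tilde J_k(r)|\lesssim k^{-1/3}$. When $\delta\gg 1$, the critical point $\theta_0\asymp\sqrt{|r-k|/k}$ is well separated from $0$ and $|\phi''(\theta_0)|=\sqrt{r^2-k^2}\asymp\sqrt{k|r-k|}$ is nondegenerate, so classical stationary phase delivers $|\tilde J_k(r)|\lesssim|\phi''(\theta_0)|^{-1/2}\asymp k^{-1/4}|r-k|^{-1/4}=k^{-1/3}\delta^{-1/4}$. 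These two bounds glue across $\delta\sim 1$ into the unified estimate \eqref{2.9}. The main obstacle I anticipate is making the matching across this transition genuinely uniform in $r$: I would control it by van der Corput's lemma applied with the third-derivative bound $|\phi'''|\asymp k$ whenever $|\phi''|$ becomes small, using truncations adapted to the scale $k^{-1/3}$.
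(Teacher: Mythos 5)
Your strategy is the standard one and is essentially the approach the paper relies on: the paper does not prove Lemma \ref{Bessel} itself (it cites \cite{Stempak,Stein1} and defers the details to \cite{MZZ1}), but its proof of the companion Lemma \ref{Bessel2} runs on exactly your machinery --- Schl\"afli's representation \eqref{2.6}, the bound \eqref{2.7} on the non-integer correction $E_k$, and stationary phase with neighbourhoods of the critical point sized $k^{-1/3}$ or $(r^2-k^2)^{-1/4}$ according to whether $|r-k|$ is below or above $k^{1/3}$. Your treatment of the regimes $r\geq 2k$ and $k/2\leq r\leq 2k$ is correct in outline.

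Two points need tightening before the argument closes. First, in the regime $r\leq k/2$, lowering $[-\pi,\pi]$ to the line $\Im\theta=-\tau_*$ creates vertical segments running only from $\pm\pi-i\tau_*$ to $\pm\pi$; these cancel only the truncated piece $\tfrac{\sin(k\pi)}{\pi}\int_0^{\tau_*}e^{-(r\sinh s+ks)}\,\mathrm{d}s$ of $E_k(r)$, not all of it as you claim. This matters because an uncancelled $E_k$ is a priori only $O((r+k)^{-1})$, which would ruin the exponential bound \eqref{2.8}; you must observe that what actually survives is the tail $\int_{\tau_*}^{\infty}e^{-(r\sinh s+ks)}\,\mathrm{d}s=O(k^{-1}e^{-k\tau_*})$, which is itself exponentially small since $\tau_*=\cosh^{-1}(k/r)\geq\cosh^{-1}2$. (Alternatively, this entire regime follows in two lines from the series bound \eqref{2.4} and Stirling: $|J_k(r)|\lesssim (er/(2k))^{k}\lesssim (e/4)^{k}\leq e^{-c(k+r)}$ for $r\leq k/2$, with no contour work at all.) Second, in the intermediate regime your case $\delta\gg1$ tacitly assumes $r>k$: for $k/2\leq r<k-k^{1/3}$ there is no real critical point and $\phi''(\theta_0)$ is not defined, so the bound must instead come from the non-stationary estimate $|\phi'(\theta)|=|r\cos\theta-k|\geq k-r$ (van der Corput's first-derivative test applied on $[0,\pi]$ and $[-\pi,0]$ separately, where $\phi'$ is monotone), which yields $|\tilde J_k(r)|\lesssim (k-r)^{-1}\leq k^{-1/4}(k-r)^{-1/4}$ precisely in the range $k-r\geq k^{1/3}$, matching \eqref{2.9}. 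With these two repairs the proposal is a complete proof, by the same route as the paper's sources.
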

For our purpose, we additionally need the asymptotic behavior of the
derivative of the Bessel function $J'_{k}(r)$. It is a
straightforward elaboration of the argument of proving Lemma
\ref{Bessel} in \cite{MZZ1}, but we give the proof for completeness.
\begin{lemma}\label{Bessel2} Assume $r, k\gg1$. Then there exists
a constant $C$ independent of $k$ and $r$ such that
$$|J'_k(r)|\leq C r^{-\frac12}.$$
\end{lemma}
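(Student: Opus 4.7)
\bigskip
\noindent\textbf{Proof proposal.}
The plan is to differentiate the Schl\"afli representation \eqref{2.6} under the integral sign and then mimic, step by step, the stationary phase / Airy analysis that underlies Lemma~\ref{Bessel}, keeping track of the extra amplitude $\sin\theta$ (respectively $\sinh s$) that is generated by $\partial_r$. Writing $J_k(r)=\tilde J_k(r)-E_k(r)$ as in \eqref{2.6}, we get
\begin{equation*}
J_k'(r)=\frac{1}{2\pi}\int_{-\pi}^{\pi} i\sin\theta\, e^{i(r\sin\theta-k\theta)}\,\mathrm d\theta
-\frac{\sin(k\pi)}{\pi}\int_0^\infty(-\sinh s)\,e^{-(r\sinh s+ks)}\,\mathrm d s
=:\tilde J_k'(r)-E_k'(r).
\end{equation*}
The error term $E_k'(r)$ is easy: the change of variables $t=\sinh s$ gives $|E_k'(r)|\lesssim\int_0^\infty t\,e^{-rt}\,\mathrm dt\lesssim r^{-2}\lesssim r^{-1/2}$ for $r\gg1$, so the whole matter reduces to the oscillatory integral $\tilde J_k'(r)$.

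I would then split $\tilde J_k'(r)$ into the three regimes already used for Lemma~\ref{Bessel}, according to the position of the stationary points of the phase $\phi(\theta)=r\sin\theta-k\theta$, whose derivative is $\phi'(\theta)=r\cos\theta-k$.
\textbf{(i)} When $r\le k/2$ there are no critical points and $|\phi'(\theta)|\gtrsim k$ uniformly; repeated integration by parts using the identity $e^{i\phi}=\frac{1}{i\phi'}\partial_\theta e^{i\phi}$ (as in \cite{MZZ1}) produces arbitrary polynomial decay in $k$, which is much better than $r^{-1/2}$ since $r\sim k$ need not hold but $k\gg1$. \textbf{(ii)} When $r\ge 2k$ the stationary point $\theta_0$ with $\cos\theta_0=k/r$ satisfies $|\sin\theta_0|\ge\sqrt{3}/2$, so $|\phi''(\theta_0)|=r|\sin\theta_0|\gtrsim r$; the standard non-degenerate stationary phase applied to the amplitude $i\sin\theta$ yields a contribution of size $|\sin\theta_0|/\sqrt{|\phi''(\theta_0)|}\lesssim r^{-1/2}$, and the non-stationary remainder is $O(r^{-1})$ by integration by parts. \textbf{(iii)} The delicate regime is $k/2\le r\le 2k$, where the two stationary points collide as $r\to k^+$ and disappear as $r<k$. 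Here I would localize near the (possibly degenerate) critical points, rescale in Airy fashion, and use a uniform stationary phase estimate exactly as in the proof of Lemma~\ref{Bessel} in \cite{MZZ1}. The crucial new point is that the amplitude now carries the extra factor $\sin\theta$, which vanishes precisely at the transition $r=k$ (where $\theta_0=0$); this vanishing compensates for the loss $k^{-1/3}$ typical of the Airy regime and gives the desired bound $r^{-1/2}$ instead.

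The main obstacle, as in \cite{MZZ1}, is this coalescing-critical-points case (iii). My plan to handle it is the following. Split the interval into $|\theta|\le\delta$ (Airy neighbourhood) and $|\theta|>\delta$ (non-degenerate part), where $\delta\sim k^{-1/3}$ in the worst subregime $|r-k|\lesssim k^{1/3}$ and $\delta\sim\sqrt{|r-k|/r}$ otherwise. On $|\theta|>\delta$ one has $|\phi'|\gtrsim r\theta^2+|r-k|$ and two integrations by parts, together with $|\sin\theta|\lesssim|\theta|$, give a bound of order $r^{-1/2}$. On $|\theta|\le\delta$ one uses the fact that $|\sin\theta|\lesssim\delta$ and that the oscillatory integral of unit amplitude over this window is $O(\delta)$ (trivially) and $O(k^{-1/3})$ (by Van der Corput / Airy); multiplying by the amplitude and optimizing gives exactly the uniform $r^{-1/2}$ bound. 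Combining (i)--(iii) with the estimate on $E_k'(r)$ yields the lemma.
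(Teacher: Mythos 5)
Your overall strategy --- differentiate Schl\"afli's representation \eqref{2.6}, control $E_k'(r)$ directly, and run a stationary phase analysis on $\tilde J_k'(r)$ that exploits the vanishing of the new amplitude $\sin\theta$ --- is exactly the paper's strategy in the transition regime $\frac k2\le r\le 2k$. (For $r\le\frac k2$ and $r\ge 2k$ the paper avoids any new oscillatory work by the recurrence $J_k'(r)=\frac12\big(J_{k-1}(r)-J_{k+1}(r)\big)$ together with \eqref{2.8} and \eqref{2.10}; your direct treatment of those two regimes is also fine.)

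The genuine gap is in your case (iii), in the subregime $k^{2/3}\ll r-k\lesssim k$. You center the cutoff at $\theta=0$ with $\delta\sim\sqrt{(r-k)/r}$, but the stationary points sit at $\pm\theta_0$ with $\theta_0=\cos^{-1}(k/r)\sim\sqrt{(r-k)/r}$, i.e.\ at the edge of (or inside) your window, and on that window neither of your two bounds closes: the trivial bound times the amplitude is $\delta\cdot\delta\sim(r-k)/r$, which exceeds $r^{-1/2}$ once $r-k\gg r^{1/2}$, while the Airy bound times the amplitude is $k^{-1/3}\delta\sim k^{-1/3}\sqrt{(r-k)/r}$, which exceeds $r^{-1/2}$ once $r-k\gg k^{2/3}$. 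The $O(k^{-1/3})$ Van der Corput bound is the worst-case uniform estimate; in this subregime the stationary points are non-degenerate with $|\phi''(\pm\theta_0)|=\sqrt{r^2-k^2}$, and one must use that sharper information rather than the Airy-type bound. The paper sidesteps the issue by centering the window $\Omega_\delta$ at $\pm\theta_0$, taking $\delta=c(r^2-k^2)^{-1/4}$, and balancing the trivial bound $|\sin(\theta_0\pm\delta)|\,\delta$ against the integration-by-parts bound $|\sin(\theta_0\pm\delta)|\,|r\cos(\theta_0\pm\delta)-k|^{-1}$, which yields $k^{-3/4}|r-k|^{1/4}+k^{-1/2}|r-k|^{-1/2}\lesssim r^{-1/2}$. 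Your argument is repairable along the same lines (or by further splitting your window into $|\theta|\le\theta_0/2$, where $|\phi'|\gtrsim r-k$, and $|\theta|\sim\theta_0$, where the second-derivative test with $|\phi''|\gtrsim\sqrt{r^2-k^2}$ applies), but as written the claimed optimization does not give the uniform $r^{-1/2}$ bound.
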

\begin{proof}When $r\leq
\frac k2$ or $r\geq 2k$, we apply the recurrence formula
\cite{Watson}
$$J'_k(r)=\frac12\big(J_{k-1}(r)-J_{k+1}(r)\big),$$ \eqref{2.8} and
\eqref{2.10} to obtaining $|J'_k(r)|\leq C r^{-\frac12}$.

When $\frac k 2\leq r\leq 2k$, we have by \eqref{2.6}
\begin{equation*}
\begin{split}
J'_k(r)=\tilde{J}'_k(r)-E'_k(r).
\end{split}
\end{equation*}
A simple computation gives that for $r>0$
\begin{equation*}
|E'_k(r)|=\Big|\frac{\sin(k\pi)}{\pi}\int_0^\infty e^{-(r\sinh
s+ks)}\sinh s~\mathrm{d}s\Big|\leq C (r+k)^{-1}.
\end{equation*}
Thus we only need to estimate $\tilde{J}'_k(r)$. We divide two cases
$r>k$ and $r\leq k$ to estimate it by the stationary phase argument.
Let
$$\phi_{r,k}(\theta)=r\sin\theta-k\theta.$$

{\bf Case 1: $k<r\leq2k$.} Let $\theta_0=\cos^{-1}(\frac k r)$, then
$$\phi'_{r,k}(\theta_0)=r\cos\theta_0-k=0.$$

Now we split $\tilde{J}_k(r)$ into two pieces:
\begin{equation*}
\begin{split}
\tilde{J}'_k(r)&=\frac i{2\pi}\int_{\Omega_{\delta}}
e^{ir\sin\theta-ik\theta}\sin\theta~\mathrm{d}\theta+\frac
i{2\pi}\int_{B_{\delta}}
e^{ir\sin\theta-ik\theta}\sin\theta~\mathrm{d}\theta,
\end{split}
\end{equation*}
where
\begin{equation*}
\begin{split}
\Omega_{\delta}=\{\theta:|\theta\pm\theta_0|\leq \delta\},\quad
B_{\delta}=[-\pi,\pi]\setminus \Omega_{\delta}\quad \text{with}\quad
\delta>0.
\end{split}
\end{equation*}

We have by taking absolute values
\begin{equation*}
\begin{split}
\Big|\frac1{2\pi}\int_{\Omega_{\delta}}
e^{ir\sin\theta-ik\theta}\sin\theta~\mathrm{d}\theta\Big|\leq
C|\sin(\theta_0\pm\delta)|\delta.
\end{split}
\end{equation*}

Integrating by parts, we have
\begin{equation*}
\begin{split}
\int_{B_{\delta}}
e^{ir\sin\theta-ik\theta}\sin\theta~\mathrm{d}\theta=\frac{e^{i(r\sin\theta-k\theta)}\sin\theta}{i(r\cos\theta-k)}\Big|_{\partial
B_{\delta}}-\int_{B_{\delta}}\frac{
e^{ir\sin\theta-ik\theta}(r-k\cos\theta)}{i(r\cos\theta-k)^2}\mathrm{d}\theta,
\end{split}
\end{equation*}
where $\partial B_{\delta}=\{\pm\pi,\pm\theta_0\pm\delta\}$. It is
easy to see that
\begin{equation*}
\begin{split}
\Big|\frac{e^{i(r\sin\theta-k\theta)}\sin\theta}{i(r\cos\theta-k)}\Big|_{\partial
B_{\delta}}\Big|\leq
c\sin(\theta_0\pm\delta)|r\cos(\theta_0\pm\delta)-k|^{-1}.
\end{split}
\end{equation*}

Since $r-k\cos\theta>0$, we obtain
\begin{equation*}
\begin{split}
\Big|\int_{B_{\delta}}\frac{
e^{ir\sin\theta-ik\theta}(r-k\cos\theta)}{i(r\cos\theta-k)^2}\mathrm{d}\theta\Big|&\leq
\int_{B_{\delta}}\frac{
|r-k\cos\theta|}{(r\cos\theta-k)^2}\mathrm{d}\theta=
\frac{\sin\theta}{(r\cos\theta-k)}\Big|_{\partial B_{\delta}}\\&\leq
c|\sin(\theta_0\pm\delta)|\cdot|r\cos(\theta_0\pm\delta)-k|^{-1}.
\end{split}
\end{equation*}
Therefore,
$$|\tilde{J}'_k(r)|\leq
C|\sin(\theta_0\pm\delta)|\delta+c|\sin(\theta_0\pm\delta)|\cdot|r\cos(\theta_0\pm\delta)-k|^{-1}.$$
We shall choose proper $\delta$ such that
$$|\sin(\theta_0\pm\delta)|\delta\sim
c|\sin(\theta_0\pm\delta)|\cdot|r\cos(\theta_0\pm\delta)-k|^{-1}.$$
Noting that
$\cos(\theta_0\pm\delta)=\cos\theta_0\cos\delta\mp\sin\theta_0\sin\delta$
and the definition of $\theta_0$, we get
\begin{equation*}
\begin{split}
r\cos(\theta_0\pm\delta)-k=k\cos\delta\pm\sqrt{r^2-k^2}\sin\delta-k.
\end{split}
\end{equation*}
Since $1-\cos\delta=2\sin^2\frac{\delta}2$, one has
$$|r\cos(\theta_0\pm\delta)-k|\sim
|k\delta^2\pm\delta\sqrt{r^2-k^2}|\quad\text{with small}~\delta.$$
On the other hand, we have by
$\sin(\theta_0\pm\delta)=\sin\theta_0\cos\delta\pm\cos\theta_0\sin\delta$,
\begin{equation*}
\begin{split}
\sin(\theta_0\pm\delta)=\pm\frac{\sqrt{r^2-k^2}}r(1-\frac{\delta^2}2)\pm\frac
k r\delta.
\end{split}
\end{equation*}

 When $|r-k|\leq k^{\frac13}$, choosing $\delta=Ck^{-\frac13}$ with
large $C\geq2$, we have
\begin{equation*}
\begin{split}
|\sin(\theta_0\pm\delta)|\cdot|r\cos(\theta_0\pm\delta)-k|^{-1}\lesssim
k^{-\frac23}(C^2-Ck^{-\frac23}\sqrt{r^2-k^2})^{-1}\lesssim
k^{-\frac23}\lesssim r^{-\frac23}.
\end{split}
\end{equation*}

When $|r-k|\geq k^{\frac13}$, taking $\delta=c(r^2-k^2)^{-\frac14}$
with small $c>0$, we obtain
\begin{equation*}
\begin{split}
&|\sin(\theta_0\pm\delta)|\cdot|r\cos(\theta_0\pm\delta)-k|^{-1}\\\lesssim&
\big[|r-k|^{\frac12}r^{-\frac12}+r^{-1}+(r^2-k^2)^{-\frac14}\big]
(r^2-k^2)^{-\frac14}\big(c-c^2k(r^2-k^2)^{-\frac34}\big)^{-1}\\\lesssim&
k^{-\frac34}|r-k|^{\frac14}+k^{-\frac12}|r-k|^{-\frac12}\lesssim
r^{-\frac12},
\end{split}
\end{equation*}
where we use the fact that
$(r^2-k^2)^{-\frac14}\leq(2k)^{-\frac14}|r-k|^{-\frac14}$ for $k<r$.

{\bf Case 2: $\frac k2 \leq r\leq k $.} When $k-k^{\frac13}<r<k$,
choosing $\theta_0=0$ and $\delta=Ck^{-\frac13}$ with large
$C\geq2$, it follows from the above argument that
\begin{equation*}
\begin{split}
|\tilde{J}'_k(r)|&\lesssim|\sin(\theta_0\pm\delta)|\cdot|r\cos(\theta_0\pm\delta)-k|^{-1}\\&\lesssim
\delta({r\delta^2}/2-|r-k|)^{-1}\lesssim k^{-\frac23}\lesssim
r^{-\frac23}.
\end{split}
\end{equation*}

When $r<k-k^{\frac13}$, there is no critical point. Hence we obtain
$$|\tilde{J}'_k(r)|\lesssim|r-k|^{-2}\lesssim r^{-\frac23}.$$

Finally, we collect all the estimates to get
$|\tilde{J}'_k(r)|\lesssim r^{-\frac12}.$ \vspace{0.2cm}
\end{proof}

Next, we record the two basic results about the modified square
function expressions.
\begin{lemma}[A modified Littlewood-Paley theorem \cite{Stein1}]\label{square-expression} Let $\beta\in C_0^\infty(\R^+)$ be supported in
$[\frac12,2]$, $\beta_j(\rho)=\beta(2^{-j}\rho)$ and
$\sum\limits_{j}\beta_j=1$. Then for any $\nu(k)>0$ and
$1<p<\infty$, we have
\begin{equation}\label{2.11}
\begin{split}
&\bigg\|\sum_{j\in
\Z}\int_0^\infty(r\rho)^{-\frac{n-2}2}J_{\nu(k)}(r\rho)\cos(t\rho)b^0_{k,\ell}(\rho)\rho^{n-1}\beta_j(\rho)\mathrm{d}\rho\bigg\|_{L^p_{r^{n-1}}}\\&\sim
\bigg\|\Big(\sum_{j\in
\Z}\Big|\int_0^\infty(r\rho)^{-\frac{n-2}2}J_{\nu(k)}(r\rho)\cos(t\rho)
b^0_{k,\ell}(\rho)\rho^{n-1}\beta_j(\rho)\mathrm{d}\rho\Big|^2\Big)^{\frac12}\bigg\|_{L^p_{r^{n-1}}}.
\end{split}
\end{equation}
\end{lemma}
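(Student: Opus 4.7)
The plan is to recognize, for each $j \in \Z$, the operator
\[ P_j g(r) := \int_0^\infty (r\rho)^{-(n-2)/2} J_{\nu(k)}(r\rho)\, \beta_j(\rho)\, g(\rho)\, \rho^{n-1}\,d\rho \]
as a dyadic Hankel multiplier with symbol $\beta_j$, and to deduce the square-function equivalence from a vector-valued $L^p$ boundedness statement for these multipliers. By Khintchine's inequality applied to a sequence of Rademacher functions $\{\epsilon_j(\omega)\}$, the two directions of the claimed equivalence will follow from the uniform bound
\[ \bigg\| \sum_j \epsilon_j P_j g \bigg\|_{L^p_{r^{n-1}}} \lesssim \|g\|_{L^p_{r^{n-1}}}, \qquad 1 < p < \infty, \]
together with its dual, for every sign sequence $\epsilon_j \in \{\pm 1\}$. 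In the present application $g(\rho) = \cos(t\rho)\, b^0_{k,\ell}(\rho)$, but the bound is proved for arbitrary $g$; the passage from randomized sums back to the square function is a further application of Khintchine.

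Next I would express $T_\epsilon := \sum_j \epsilon_j P_j$ as an integral operator on $((0,\infty), r^{n-1}\,dr)$ with kernel
\[ K_\epsilon(r,s) = \sum_j \epsilon_j \int_0^\infty (r\rho)^{-(n-2)/2} J_\nu(r\rho)\, (s\rho)^{-(n-2)/2} J_\nu(s\rho)\, \beta_j(\rho)\, \rho^{n-1}\,d\rho, \]
where $\nu=\nu(k)$, and establish Calder\'on--Zygmund size and regularity estimates
\[ |K_\epsilon(r,s)| \lesssim \frac{1}{|r-s|^n}, \qquad |\partial_r K_\epsilon(r,s)| + |\partial_s K_\epsilon(r,s)| \lesssim \frac{1}{|r-s|^{n+1}}, \]
with constants independent of $\epsilon$ and $\nu$, relative to the doubling space $((0,\infty), r^{n-1}dr, |r-s|)$. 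The $L^2$ boundedness of $T_\epsilon$ is immediate from Plancherel for the Hankel transform together with $|\beta_j|\le 1$, so once the kernel bounds are in place, a standard Calder\'on--Zygmund decomposition delivers the weak-type $(1,1)$ estimate; Marcinkiewicz interpolation then gives $L^p$ for $1 < p \le 2$ and duality handles $2 \le p < \infty$.

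The main obstacle will be proving the kernel estimates uniformly in the order $\nu$. The dyadic localization $\mathrm{supp}\,\beta_j\subset[2^{j-1},2^{j+1}]$ together with the scaling $(r,\rho)\mapsto(2^j r, 2^{-j}\rho)$ reduces each summand to a fixed symbol $\beta$, but the three Bessel regimes described in Lemma~\ref{Bessel} each contribute differently. The exponential range $\rho\ll\nu$ is harmless by \eqref{2.8}; the oscillatory range $\rho\gg\nu$ yields rapid decay via integration by parts in $\rho$ using \eqref{2.10} and Lemma~\ref{Bessel2}; the transition region $\rho\sim\nu$, where only the amplitude bound $\nu^{-1/3}$ from \eqref{2.9} is at our disposal, will require the most care and must be handled by direct integration against $\beta$. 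The derivative bounds on $K_\epsilon$ are obtained in the same manner, with the extra factor of $\rho$ from differentiating $J_\nu(r\rho)$ controlled by Lemma~\ref{Bessel2}. With these $\nu$-uniform Calder\'on--Zygmund kernel bounds in hand, the remainder of the argument is routine.
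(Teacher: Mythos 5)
Your overall strategy is a legitimate alternative to the paper's. The paper does not randomize: it treats the whole family $\{L_jf\}_{j}$ as a single $\ell^2$-valued operator, writes $L_jf=k_j\#f$ using the Hankel translation $\tau_x$ and generalized convolution, and verifies the vector-valued H\"ormander condition $\sum_j\int_{|x-y_0|>2|y-y_0|}|\tau_yk_j(x)-\tau_{y_0}k_j(x)|\,d\omega(x)\le C$ on the homogeneous space $((0,\infty),r^{n-1}dr)$, invoking Coifman--Weiss for the weak $(1,1)$ bound and deferring the kernel estimate to \cite{BM,GS}; the reverse inequality is then obtained by duality/polarization exactly as you intend. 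Your Khintchine reduction to uniform bounds for $\sum_j\epsilon_jP_j$ trades the vector-valued singular integral for a scalar one, which is fine, and the duality step for the lower bound is the same in both treatments. (Minor point: the right-hand side of your uniform bound should be $\|\mathcal{H}_\nu g\|_{L^p_{r^{n-1}}}=\|\sum_jP_jg\|_{L^p_{r^{n-1}}}$, not the $L^p$ norm of the frequency-side function $g$.)

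There is, however, a concrete error in the Calder\'on--Zygmund estimates you propose to prove: the bounds $|K_\epsilon(r,s)|\lesssim|r-s|^{-n}$ and $|\partial K_\epsilon|\lesssim|r-s|^{-n-1}$ are the Euclidean $\R^n$ conditions and are \emph{not} the correct ones for the doubling space $((0,\infty),r^{n-1}dr,|r-s|)$, where $\mu(B(r,d))\sim d\,(d+r)^{n-1}$, not $d^n$. With your bounds the H\"ormander integral fails: taking $y_0=R\gg1$ and $|y-y_0|=\delta\ll R$,
\begin{equation*}
\int_{|x-R|>2\delta}\delta\,|x-R|^{-n-1}x^{n-1}\,dx\gtrsim\delta\,R^{n-1}\int_{2\delta}^{R}u^{-n-1}\,du\sim(R/\delta)^{n-1},
\end{equation*}
which is unbounded, so the weak $(1,1)$ estimate does not follow and the interpolation/duality chain does not close. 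What you actually need (and what is true for these kernels, uniformly in $\nu$ --- this is precisely the content of the Fourier--Bessel multiplier estimates in \cite{GS,BM} that the paper cites) is the size condition $|K_\epsilon(r,s)|\lesssim\mu(B(r,|r-s|))^{-1}\sim|r-s|^{-1}\max(r,|r-s|)^{-(n-1)}$ together with the matching regularity condition carrying an extra factor $|s-s_0|/|r-s_0|$; note that for $|r-s|\ll r$ this is much smaller than $|r-s|^{-n}$, so the extra decay in $r$ coming from the factors $(r\rho)^{-(n-2)/2}$ and the $r^{-1/2}$ amplitude of $J_\nu(r\rho)$ must be retained, not discarded, in your three-regime Bessel analysis.
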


For the sake of the completeness, we will prove Lemma
\ref{square-expression} in the appendix by using a weak-type $(1,1)$
estimate for the multiplier operators with respect to the Hankel
transform.

\begin{lemma}[Littlewood-Paley-Stein theorem for the sphere,
\cite{Stein2,Stri,Sterbenz}]\label{square-expression2} Let $\beta\in
C_0^\infty(\R^+)$ be supported in $[\frac12,4]$ and $\beta(\rho)=1$
when $\rho\in[1,2]$. Assume $\beta_j(\rho)=\beta(2^{-j}\rho).$ Then
for any $1<p<\infty$ and any test function $f(\theta)$ defined on
$\mathbb{S}^{n-1}$, we have
\begin{equation}\label{2.12}
\|f(\theta)\|_{L^p_{\theta}(\mathbb{S}^{n-1})}\sim
\Big\|\Big(\big|a_{0,1}Y_{0,1}(\theta)\big|^2+\sum_{j=0}^\infty\big|\sum_{k}\sum_{\ell=1}^{{d}(k)}\beta_j(k)a_{k,\ell}Y_{k,\ell}(\theta)\big|^2\Big)^{\frac12}\Big\|_{L^p_{\theta}(\mathbb{S}^{n-1})},
\end{equation}
where
$f=\sum\limits_{k=0}^{\infty}\sum\limits_{\ell=1}^{{d}(k)}a_{k,\ell}Y_{k,\ell}(\theta)$.
\end{lemma}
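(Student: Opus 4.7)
\textbf{Proof plan for Lemma \ref{square-expression2}.} The strategy is to interpret the dyadic blocks
\[
P_j f(\theta) := \sum_{k}\sum_{\ell=1}^{d(k)}\beta_j(k)\,a_{k,\ell}Y_{k,\ell}(\theta),\qquad j\ge 0,
\]
as spectral multipliers of $\sqrt{-\Delta_\theta}$. Indeed $Y_{k,\ell}$ is an eigenfunction of $-\Delta_\theta$ with eigenvalue $k(k+n-2)\sim k^{2}$, so after a harmless smooth modification of $\beta_j$ (replacing $\beta_j(k)$ by $\tilde\beta_j(\sqrt{k(k+n-2)})$, which agrees with $\beta_j(k)$ up to fast-decaying error) we may write $P_j = \tilde\beta_j(\sqrt{-\Delta_\theta})$, while the term $a_{0,1}Y_{0,1}$ is exactly the projection onto $\ker(-\Delta_\theta)$. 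Thus the square-function identity reduces to a Littlewood--Paley theorem for the Laplace--Beltrami operator on $\mathbb{S}^{n-1}$.

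\textbf{Step 1: Randomized multiplier bound.} For any choice of signs $\varepsilon_j\in\{-1,+1\}$, the function
\[
m_\varepsilon(\lambda)\;=\;\sum_{j\ge 0}\varepsilon_j\,\tilde\beta_j(\lambda)
\]
satisfies Mihlin--H\"ormander conditions on $\lambda\ge 1$, uniformly in $\{\varepsilon_j\}$. Invoking the spectral multiplier theorem on the sphere (available either through Strichartz's multiplier theorem for compact symmetric spaces \cite{Stri} or through Stein's Littlewood--Paley $g$-function theory for the heat semigroup $e^{t\Delta_\theta}$ \cite{Stein2}), I obtain, uniformly in $\{\varepsilon_j\}$,
\[
\Big\|\sum_{j\ge 0}\varepsilon_j P_j f\Big\|_{L^p_\theta(\mathbb{S}^{n-1})}\;\lesssim\;\|f\|_{L^p_\theta(\mathbb{S}^{n-1})},\qquad 1<p<\infty.
\]

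\textbf{Step 2: Khintchine.} Introducing Rademacher variables $\{\varepsilon_j(\omega)\}$, raising the inequality of Step 1 to the power $p$, integrating in $\omega$ and applying Khintchine's inequality pointwise in $\theta$ yields the forward estimate
\[
\Big\|\bigl(|a_{0,1}Y_{0,1}|^{2}+\sum_{j\ge 0}|P_j f|^{2}\bigr)^{1/2}\Big\|_{L^p_\theta}\;\lesssim\;\|f\|_{L^p_\theta}.
\]

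\textbf{Step 3: Reverse direction by duality.} Using the Calder\'on reproducing formula $f=a_{0,1}Y_{0,1}+\sum_{j\ge 0}P_j f$ together with the near-orthogonality $P_j = P_j \tilde P_j$ for a slight fattening $\tilde P_j$ of the projector, pairing $f$ against $g\in L^{p'}$ with $\|g\|_{p'}\le 1$, applying Cauchy--Schwarz in $j$, and then the forward square-function estimate of Step 2 at the dual exponent $p'$ yields
\[
\|f\|_{L^p_\theta}\;\lesssim\;\Big\|\bigl(|a_{0,1}Y_{0,1}|^{2}+\sum_{j\ge 0}|P_j f|^{2}\bigr)^{1/2}\Big\|_{L^p_\theta}.
\]

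\textbf{Main obstacle.} The substantive point is the uniform-in-$\{\varepsilon_j\}$ spectral multiplier bound of Step 1: on a compact manifold there is no global dilation structure, and one must produce a Mihlin--H\"ormander type theorem for $\sqrt{-\Delta_\theta}$ valid on the full $L^p$ range $1<p<\infty$. For the round sphere this is classical and may be quoted from \cite{Stri,Stein2}; all other ingredients (Khintchine averaging and duality) are routine once Step 1 is in hand, which is why the lemma can essentially be attributed to \cite{Stein2,Stri,Sterbenz}.
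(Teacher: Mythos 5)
The paper does not actually prove this lemma; it is quoted from \cite{Stein2,Stri,Sterbenz}, and your outline is precisely the standard argument behind those citations (uniform Mihlin--H\"ormander multiplier bounds for $\sqrt{-\Delta_\theta}$ via Strichartz's spherical-harmonic multiplier theorem or Stein's semigroup theory, then Khintchine and duality), so it is essentially the same approach and is correct. The one point to tidy in a full write-up is Step 3: since $\beta=1$ on $[1,2]$ but is only supported in $[\frac12,4]$, the $\beta_j(k)$ need not sum to $1$, so the reproducing formula must be run with an exact partition $\phi_j$ subordinate to the $\beta_j$ (using $\phi_j=\phi_j\beta_j$), which is exactly the ``fattening'' you allude to.
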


We conclude this subsection by showing the ``Bernstein" inequality
on sphere
\begin{equation}\label{Bernstein}
\big\|\sum_{k=2^{j}}^{2^{j+1}}\sum_{\ell=1}^{{d}(k)} a_{k,\ell}
Y_{k,\ell}(\theta)\big\|_{L^q(\mathbb{S}^{n-1})}\leq C_{q,n}
2^{j(n-1)(\frac12-\frac1q)}\Big(\sum_{k=2^{j}}^{2^{j+1}}\sum_{\ell=1}^{{d}(k)}
|a_{k,\ell}|^2\Big)^{\frac12}
\end{equation}
for $q\geq2, j=0,1,2\cdots$.

In fact, since $\sum\limits_{\ell=1}^{{d}(k)}
|Y_{k,\ell}(\theta)|^2=d(k)|\mathbb{S}^{n-1}|^{-1}, \forall
\theta\in \mathbb{S}^{n-1}$ (see Stein-Weiss \cite{SW}), one has
\begin{equation*}
\begin{split}
\Big\|\sum_{k=2^{j}}^{2^{j+1}}\sum_{\ell=1}^{{d}(k)} a_{k,\ell}
Y_{k,\ell}(\theta)\Big\|^2_{L^\infty(\mathbb{S}^{n-1})}&\leq C
\sum_{k=2^{j}}^{2^{j+1}}\sum_{\ell=1}^{{d}(k)}
|a_{k,\ell}|^2\Big\|\Big(\sum_{k=2^{j}}^{2^{j+1}}\sum_{\ell=1}^{{d}(k)}
|Y_{k,\ell}(\theta)|^2\Big)^{\frac12}\Big\|^2_{L^\infty(\mathbb{S}^{n-1})}\\&\leq
C \sum_{k=2^{j}}^{2^{j+1}}\sum_{\ell=1}^{{d}(k)}
|a_{k,\ell}|^2\sum_{k=2^{j}}^{2^{j+1}}k^{n-2}\\&\leq C2^{j(n-1)}
\sum_{k=2^{j}}^{2^{j+1}}\sum_{\ell=1}^{{d}(k)} |a_{k,\ell}|^2.
\end{split}
\end{equation*}
Interpolating this with
\begin{equation*}
\Big\|\sum_{k=2^{j}}^{2^{j+1}}\sum_{\ell=1}^{{d}(k)} a_{k,\ell}
Y_{k,\ell}(\theta)\Big\|^2_{L^2(\mathbb{S}^{n-1})}\leq C
\sum_{k=2^{j}}^{2^{j+1}}\sum_{\ell=1}^{{d}(k)} |a_{k,\ell}|^2
\end{equation*}
yields \eqref{Bernstein}.

\subsection{Spectrum of $-\Delta+\frac{a}{|x|^2}$ and Hankel transform}
Let us first consider the eigenvalue problem associated with the
operator $-\Delta+\frac{a}{|x|^2}$:
\begin{equation*}
\begin{cases}
-\Delta u+\frac{a}{|x|^2}u=\rho^2 u\quad x\in B=\{x:|x|\leq1\},\\
u(x)=0,\qquad x\in \mathbb{S}^{n-1}.
\end{cases}
\end{equation*}

If $u(x)=f(r)Y_k(\theta)$, we have
\begin{equation*}
f''(r)+\frac{n-1}r f'(r)+[\rho^2-\frac{k(k+n-2)+a}{r^2}]f(r)=0.
\end{equation*}

Let $\lambda=\rho r$ and $f(r)=\lambda^{-\frac{n-2}2}g(\lambda)$, we
obtain
\begin{equation}\label{bessfunc}
g''(\lambda)+\frac{1}\lambda
g'(\lambda)+[1-\frac{(k+\frac{n-2}2)^2+a}{\lambda^2}]g(\lambda)=0.
\end{equation}

Define
\begin{equation}\label{2.14}
\mu(k)=\frac{n-2}2+k,\quad\text{and}\quad\nu(k)=\sqrt{\mu^2(k)+a}\quad\text{with}\quad
a>-(n-2)^2/4.
\end{equation}
The Bessel function $J_{\nu(k)}(\lambda)$ solves the Bessel equation
\eqref{bessfunc}. And the eigenfunctions corresponding to the
spectrum $\rho^2$ can be  expressed by
\begin{equation}\label{2.16}
\phi_{\rho}(x)=(\rho r)^{-\frac{n-2}2}J_{\nu(k)}(\rho
r)Y_k(\theta)\quad\text{with}\quad x=r\theta,
\end{equation}
where
\begin{equation}\label{2.15}
\Big(-\Delta+\frac{a}{|x|^2}\Big)\phi_{\rho}=\rho^2\phi_{\rho}.
\end{equation}

We define the following elliptic operator
\begin{equation}\label{2.17}
\begin{split}
A_{\nu(k)}:&=-\partial_r^2-\frac{n-1}r\partial_r+\frac{k(k+n-2)+a}{r^2}\\&=-\partial_r^2-\frac{n-1}r\partial_r+\frac{\nu^2(k)-\big(\frac{n-2}2\big)^2}{r^2},
\end{split}
\end{equation}
then $A_{\nu(k)}\phi_{\rho}=\rho^2\phi_{\rho}$.  Define the Hankel
transform of order $\nu$:
\begin{equation}\label{2.18}
(\mathcal{H}_{\nu}f)(\xi)=\int_0^\infty(r\rho)^{-\frac{n-2}2}J_{\nu}(r\rho)f(r\omega)r^{n-1}\mathrm{d}r,
\end{equation}
where $\rho=|\xi|$, $\omega=\xi/|\xi|$ and $J_{\nu}$ is the Bessel
function of order $\nu$. Specially, if the function $f$ is radial,
then
\begin{equation}\label{2.19}
(\mathcal{H}_{\nu}f)(\rho)=\int_0^\infty(r\rho)^{-\frac{n-2}2}J_{\nu}(r\rho)f(r)r^{n-1}\mathrm{d}r.
\end{equation}

If $
f(x)=\sum\limits_{k=0}^{\infty}\sum\limits_{\ell=1}^{d(k)}a_{k,\ell}(r)Y_{k,\ell}(\theta)
$, then we obtain by \eqref{2.3}
\begin{equation}\label{2.20}
\begin{split}
\hat f(\xi)=\sum_{k=0}^{\infty}\sum_{\ell=1}^{d(k)}2\pi
i^{k}Y_{k,\ell}(\omega)\big(\mathcal{H}_{\mu(k)}a_{k,\ell}\big)(\rho).
\end{split}
\end{equation}

We will also make use of the following properties of the Hankel
transform, which appears in  \cite{BPSS,PSS}.
\begin{lemma}\label{Hankel3}
Let $\mathcal{H}_{\nu}$ and $A_{\nu}$ be defined as above. Then

$(\rm{i})$ $\mathcal{H}_{\nu}=\mathcal{H}_{\nu}^{-1}$,

$(\rm{ii})$ $\mathcal{H}_{\nu}$ is self-adjoint, i.e.
$\mathcal{H}_{\nu}=\mathcal{H}_{\nu}^*$,

$(\rm{iii})$ $\mathcal{H}_{\nu}$ is an $L^2$ isometry, i.e.
$\|\mathcal{H}_{\nu}\phi\|_{L^2_\xi}=\|\phi\|_{L^2_x}$,

$(\rm{iv})$ $\mathcal{H}_{\nu}(
A_{\nu}\phi)(\xi)=|\xi|^2(\mathcal{H}_{\nu} \phi)(\xi)$, for
$\phi\in L^2$.
\end{lemma}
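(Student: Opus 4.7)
The plan is to treat parts (i)--(iii) as essentially a repackaging of the classical Hankel inversion theorem, and to prove the intertwining property (iv) by transferring $A_{\nu}$ onto the kernel of $\mathcal{H}_\nu$ using the Bessel equation that defines $J_\nu$.

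For (ii), the kernel $K(r,\rho) := (r\rho)^{-(n-2)/2} J_\nu(r\rho)$ is manifestly symmetric in $(r,\rho)$; writing the pairing $\langle \mathcal{H}_\nu f, g\rangle$ as a triple integral against the measure $r^{n-1}\rho^{n-1}\,dr\,d\rho\,d\omega$ and swapping the two radial variables by Fubini gives $\mathcal{H}_\nu^\ast = \mathcal{H}_\nu$. For (i), after the substitution $u(r) := r^{(n-2)/2} f(r)$ and factoring a matching weight in $\rho$, the identity $\mathcal{H}_\nu \mathcal{H}_\nu f = f$ collapses to the classical Hankel inversion formula for the un-normalized transform $\tilde{\mathcal{H}}_\nu h(\rho) := \int_0^\infty J_\nu(r\rho) h(r)\,r\,dr$; the latter is available provided $\nu > -1/2$, a condition guaranteed here by the hypothesis $a > -(n-2)^2/4$ in \eqref{2.14}, and the statement for general $L^2$ functions on $\R^n$ follows by summing over angular modes via the spherical harmonic decomposition \eqref{2.20}. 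Parts (i) and (ii) immediately yield (iii), since
\begin{equation*}
\|\mathcal{H}_\nu \phi\|_{L^2}^2 = \langle \mathcal{H}_\nu \phi, \mathcal{H}_\nu \phi\rangle = \langle \phi, \mathcal{H}_\nu^\ast \mathcal{H}_\nu \phi\rangle = \langle \phi, \mathcal{H}_\nu \mathcal{H}_\nu \phi\rangle = \|\phi\|_{L^2}^2.
\end{equation*}

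For (iv), the key observation is that the kernel of $\mathcal{H}_\nu$ is built from eigenfunctions of $A_\nu$. Indeed, the very substitution $\lambda = r\rho$, $f(r) = \lambda^{-(n-2)/2} g(\lambda)$ that produced the Bessel equation \eqref{bessfunc} shows
\begin{equation*}
A_\nu \bigl[(r\rho)^{-(n-2)/2} J_\nu(r\rho)\bigr] = \rho^2 \bigl[(r\rho)^{-(n-2)/2} J_\nu(r\rho)\bigr],
\end{equation*}
which is exactly the computation behind \eqref{2.15}. I would then integrate by parts twice in the radial variable, using that the radial piece $-\partial_r^2 - \frac{n-1}{r}\partial_r$ is symmetric with respect to the measure $r^{n-1}\,dr$ while multiplication by $\bigl(\nu^2 - ((n-2)/2)^2\bigr)/r^{2}$ is already self-adjoint, in order to move $A_\nu$ from $\phi$ onto the kernel and extract the scalar $\rho^{2}$. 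One first establishes (iv) for smooth $\phi$ compactly supported away from the origin, and then extends to $\phi \in L^2$ by density, interpreting $A_\nu$ via its self-adjoint extension.

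The only delicate point is the integration by parts in (iv): one must check that the boundary contributions at $r=0$ and $r=\infty$ vanish on the chosen dense class. Near the origin this is controlled by the small-argument bounds \eqref{2.4}--\eqref{2.5} together with the fact that $\nu(k) > -1/2$ for every angular mode under the standing hypothesis on $a$, while at infinity the asymptotic \eqref{2.10} provides the required decay when paired with Schwartz-class data. None of these estimates is deep, but they are where all the analytic work of the lemma is concentrated.
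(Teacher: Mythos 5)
The paper does not actually prove this lemma: it is quoted verbatim from Burq--Planchon--Stalker--Tahvildar-Zadeh and Planchon--Stalker--Tahvildar-Zadeh, with only a citation offered. Your argument is therefore not ``the paper's route'' but a self-contained reconstruction of the standard proof, and it is essentially correct. The symmetry of the kernel $(r\rho)^{-(n-2)/2}J_\nu(r\rho)$ plus Fubini gives (ii); the conjugation $f\mapsto r^{(n-2)/2}f$, which is an isometry from $L^2(r^{n-1}\,dr)$ to $L^2(r\,dr)$, reduces (i) to the classical Hankel inversion theorem (valid since every $\nu(k)=\sqrt{\mu(k)^2+a}>0>-\tfrac12$ under the standing hypothesis on $a$); and (iii) follows formally from (i)--(ii) as you write, or directly from the classical Hankel--Plancherel identity after the same conjugation. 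For (iv), your observation that the kernel is an eigenfunction of $A_\nu$ (this is exactly \eqref{2.16}--\eqref{2.15}) together with two integrations by parts against the measure $r^{n-1}\,dr$ is the right mechanism. Two small remarks on rigor: first, there is a mild circularity in extending (i) and (ii) to all of $L^2$ before boundedness is known, so you should state explicitly that (i)--(iii) are first proved on a dense class (e.g.\ $C_c^\infty(0,\infty)$) and then extended by the isometry property; second, since you take $\phi$ smooth and compactly supported away from the origin in (iv), the boundary terms vanish trivially and the appeal to the asymptotics \eqref{2.4}--\eqref{2.5} and \eqref{2.10} is unnecessary --- the genuinely delicate point you gloss over is that the statement ``for $\phi\in L^2$'' requires identifying the closure of $A_\nu$ on that core with the intended self-adjoint realization (Friedrichs extension), which can fail to be automatic for small $\nu$; the cited references handle this by working with the functional calculus of $A_\nu$ from the outset.
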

Let $\mathcal{K}_{\mu,\nu}^0=\mathcal{H}_{\mu}\mathcal{H}_{\nu}$,
then as well as in \cite{PSS} one has
\begin{equation}\label{2.21}
A_{\mu}\mathcal{K}_{\mu,\nu}^0=\mathcal{K}_{\mu,\nu}^0A_{\nu}.
\end{equation}
For our purpose, we need another crucial properties of
$\mathcal{K}_{\mu(k),\nu(k)}^0$ with $k=0$:
\begin{lemma}[The boundness of $\mathcal{K}_{\lambda,\nu}^0$, \cite{BPSS,PSS}]\label{continuous}
Let $\nu, \alpha, \beta\in \R$, $\nu>-1,
\lambda=\mu(0)=\frac{n-2}2$, $-n<\alpha<2(\nu+1)$ and
$-2(\nu+1)<\beta<n$. Then the conjugation operator
$\mathcal{K}_{\lambda,\nu}^0$ is continuous on $\dot
H^{\beta}_{p,\text{rad}}(\R^n)$ provided that
\begin{equation*}
\max\Big\{0,\frac{\lambda-\nu}{n},\frac\beta
n\Big\}<\frac1p<\min\Big\{\frac{\lambda+\nu+2}{n},\frac{\lambda+\nu+2+\beta}{n},1\Big\}
\end{equation*}
while the inverse operator $\mathcal{K}_{\nu,\lambda}^0$ is
continuous on $\dot H^{\alpha}_{q,\text{rad}}(\R^n)$ provided that
\begin{equation*}
\max\Big\{0,\frac{\lambda-\nu}{n},\frac{\lambda-\nu+\alpha}
n\Big\}<\frac1q<\min\Big\{\frac{\lambda+\nu+2}{n},1+\frac{\alpha}{n},1\Big\}.
\end{equation*}
\end{lemma}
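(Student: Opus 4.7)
The plan is to realize $\mathcal{K}^0_{\lambda,\nu}=\mathcal{H}_\lambda\mathcal{H}_\nu$ as a scale invariant integral operator acting on radial functions on $\R^n$, with an explicit kernel that can be extracted from the classical Weber--Schafheitlin formula, and then reduce the $\dot H^\beta_p$-boundedness to $L^p$-boundedness of a convolution operator on the multiplicative group $(\R^+,\,dr/r)$.

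First I would combine the two definitions in \eqref{2.19} and use Fubini to write, for a radial $f$,
\begin{equation*}
(\mathcal{K}^0_{\lambda,\nu}f)(r)=\int_0^\infty K(r,s)f(s)\,s^{n-1}ds,\qquad
K(r,s)=\int_0^\infty (r\rho)^{-\frac{n-2}{2}}(s\rho)^{-\frac{n-2}{2}}J_\lambda(r\rho)J_\nu(s\rho)\rho^{n-1}d\rho.
\end{equation*}
Because the exponent of $\rho$ in $K(r,s)$ is exactly $\lambda+\nu+1$ after absorbing the prefactors (with $\lambda=(n-2)/2$), the Weber--Schafheitlin formula gives $K$ explicitly in terms of Gauss hypergeometric functions on $\{s<r\}$ and $\{s>r\}$. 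In particular $K$ is homogeneous of degree $-n$: $K(\tau r,\tau s)=\tau^{-n}K(r,s)$, so the operator commutes with dilations. This is the key structural fact; the restriction $\nu>-1$ and $\lambda+\nu>-1$ from the positivity/convergence of the Weber--Schafheitlin integral is automatic under the hypotheses on $a$.

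Second, the homogeneity means that after the substitution $t=r/s$ the operator becomes convolution on $(\R^+,dr/r)$. The $L^p(\R^n)$-boundedness for radial functions (which is $L^p(r^{n-1}dr)$) is then equivalent to an $L^p(r^{n-1}dr)$-bound for a 1D convolution, which by a Schur-type test (or equivalently Stein--Weiss with power weights $r^{-n/p}$) follows from the $L^1$-integrability of $K(1,t)\,t^{n-1-n/p}$. Working out the behaviour of the hypergeometric profile near $t=0$, $t=1$, and $t=\infty$ from the asymptotics of $J_\lambda$ and $J_\nu$ (using the small-argument expansion $J_\nu(\rho)\sim \rho^\nu$ and the large-argument $\rho^{-1/2}$ decay recalled in Lemma \ref{Bessel}) yields precisely the range $(\lambda-\nu)/n<1/p<(\lambda+\nu+2)/n$, together with $0<1/p<1$. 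The analogous computation for $\mathcal{K}^0_{\nu,\lambda}$ swaps the roles and produces the other displayed range.

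Finally, to pass from $L^p$ to $\dot H^\beta_p$ I would invoke the intertwining relation \eqref{2.21}: writing $\dot H^\beta_p=A_\lambda^{-\beta/2}L^p$ on radial functions, one has $A_\lambda^{\beta/2}\mathcal{K}^0_{\lambda,\nu}A_\nu^{-\beta/2}=\mathcal{K}^0_{\lambda,\nu}$, so an $L^p$ bound for $\mathcal{K}^0_{\lambda,\nu}$ upgrades to a $\dot H^\beta_p$ bound, provided the fractional powers $A_\lambda^{-\beta/2}$, $A_\nu^{-\beta/2}$ act continuously between the corresponding $L^p$ spaces (Hardy type inequalities for Bessel operators), which introduces the additional requirement $\beta/n<1/p<(\lambda+\nu+2+\beta)/n$ and the two-sided bound $-2(\nu+1)<\beta<n$, matching the statement. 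The main obstacle in executing this plan is the careful bookkeeping at the endpoints: the kernel $K(r,s)$ is singular at $r=s$ and at $0$, and the sharp $1/p$ range arises from a delicate cancellation in the hypergeometric profile near $t=1$ that must be extracted, for instance, by a contour-shift or an integration-by-parts on the Weber--Schafheitlin integral.
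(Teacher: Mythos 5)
The paper does not actually prove this lemma: it is quoted verbatim from \cite{BPSS,PSS}, so the only fair comparison is with the proof in those references. Your outline — write $\mathcal{K}^0_{\lambda,\nu}=\mathcal{H}_\lambda\mathcal{H}_\nu$ as an integral operator whose kernel is the Weber--Schafheitlin integral $\int_0^\infty J_\lambda(r\rho)J_\nu(s\rho)\rho\,\mathrm{d}\rho$ times $(rs)^{-\frac{n-2}{2}}$, observe homogeneity of degree $-n$ so that the operator is a convolution on the multiplicative group, and upgrade from $L^p$ to $\dot H^\beta_p$ via the intertwining $A_\lambda\mathcal{K}^0_{\lambda,\nu}=\mathcal{K}^0_{\lambda,\nu}A_\nu$ — is exactly the strategy of Planchon--Stalker--Tahvildar-Zadeh, and the Sobolev step is fine in outline.

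There is, however, a genuine gap at the central step. The Weber--Schafheitlin formula here has exponent $\rho=-1$, so the hypergeometric parameters satisfy $c-a-b=\nu+1-(\tfrac{\lambda+\nu}{2}+1)-(\tfrac{\nu-\lambda}{2}+1)=-1$, and the kernel therefore blows up like $|r-s|^{-1}$ on the diagonal (in the degenerate case $\lambda=\nu$ this singularity collapses to $\delta(r-s)$, consistent with $\mathcal{H}_\nu^2=\mathrm{Id}$). Consequently $K(1,t)\,t^{n-1-n/p}$ is \emph{not} in $L^1$ near $t=1$, and the Schur/absolute-integrability test you invoke cannot give the conclusion; it only governs the behaviour at $t\to0$ and $t\to\infty$ (which is where $1/p<(\lambda+\nu+2)/n$ and $1/p>(\lambda-\nu)/n$ come from). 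You acknowledge this as ``the main obstacle,'' but the obstacle is the proof: one must either exhibit the principal-value (Calder\'on--Zygmund) structure of the kernel across the diagonal, or, as in \cite{PSS}, bypass the kernel entirely by computing the Mellin symbol of the dilation-invariant operator — an explicit ratio of Gamma functions, whose poles produce the stated endpoints and whose Stirling asymptotics give the Mikhlin-type bounds needed for $L^p$, $1<p<\infty$. Without that ingredient the claimed range of $p$ does not follow from the argument as written.
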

We also need the Strichartz estimates for \eqref{1.1} in
\cite{BPSS}:
\begin{lemma}[Strichartz estimates]\label{stri1}
For $n\geq2$, let $2\leq r<\infty$ and $q,r,\gamma,\sigma$ satisfy
\begin{equation}\label{2.22}
\frac1q\leq\min\Big\{\frac12,\frac{n-1}2\Big(\frac12-\frac1r\Big)\Big\},\quad
\sigma=\gamma+\frac1q-n\Big(\frac12-\frac1r\Big).
\end{equation} There exists a positive constant $C$ depending on $n, a, q,r,\gamma$ such that the solution $u$ of \eqref{1.1} satisfies
\begin{equation}\label{2.23}
\big\|(-\Delta)^{\frac\sigma2}u\big\|_{L^q_t(\R;L^r(\R^n))}\leq
C\big(\|u_0\|_{\dot H^{\gamma}}+\|u_1\|_{\dot H^{\gamma-1}}\big)
\end{equation}
provided that when $n=2,3$
\begin{equation*}
-\min\Big\{\frac{n-1}2,\nu(1)-\frac12,1+\nu(0)\Big\}<\gamma<\min\Big\{\frac{n+1}2,\nu(1)+\frac12,1+\nu(0)-\frac1q\Big\}
\end{equation*}
and when $n\geq4$
\begin{equation*}
-\min\Big\{\frac{n}2-\frac{n+3}{2(n-1)},\nu(1)-\frac{n+3}{2(n-1)},1+\nu(0)\Big\}<\gamma<\min\Big\{\frac{n+1}2,\nu(1)+\frac12,1+\nu(0)-\frac1q\Big\}.
\end{equation*}
\end{lemma}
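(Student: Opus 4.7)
The plan is to reduce the problem, mode by mode in the spherical harmonic decomposition, to the Strichartz estimates for the free wave equation $\partial_t^2 v - \Delta v = 0$ and then invoke the conjugation/boundedness machinery already stated in Lemma \ref{continuous}. First I would expand
$$u_j(x) = \sum_{k=0}^\infty \sum_{\ell=1}^{d(k)} a^j_{k,\ell}(r)\, Y_{k,\ell}(\theta), \qquad j = 0,1,$$
and observe that on the $(k,\ell)$-th harmonic subspace the solution of \eqref{1.1} is
$$u_{k,\ell}(t,r) = \cos\!\bigl(t\sqrt{A_{\nu(k)}}\bigr) a^0_{k,\ell}(r) + \frac{\sin\!\bigl(t\sqrt{A_{\nu(k)}}\bigr)}{\sqrt{A_{\nu(k)}}}\, a^1_{k,\ell}(r),$$
with $A_{\nu(k)}$ the radial operator in \eqref{2.17}. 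By Lemma \ref{Hankel3}(iv), the functional calculus of $A_{\nu(k)}$ is diagonalized by $\mathcal{H}_{\nu(k)}$, while the corresponding radial piece of the free Laplacian is $A_{\mu(k)}$ with $\mu(k) = \frac{n-2}{2}+k$.

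The central identity is the intertwining relation
$$\cos\!\bigl(t\sqrt{A_{\nu(k)}}\bigr) = \mathcal{K}^0_{\nu(k),\mu(k)}\, \cos\!\bigl(t\sqrt{A_{\mu(k)}}\bigr)\, \mathcal{K}^0_{\mu(k),\nu(k)},$$
obtained from $\mathcal{H}_{\nu}\mathcal{H}_{\nu} = \mathrm{Id}$ and the definition $\mathcal{K}^0_{\mu,\nu} = \mathcal{H}_\mu \mathcal{H}_\nu$, and the analogous identity for the sine propagator. Composing with $(-\Delta)^{\sigma/2}$ (which also commutes with the spherical projection and, after another conjugation, with $A_{\mu(k)}^{\sigma/2}$ on a fixed mode) reduces the target inequality to
$$\bigl\|(-\Delta)^{\sigma/2} v\bigr\|_{L^q_t L^r_x} \lesssim \|v_0\|_{\dot H^\gamma} + \|v_1\|_{\dot H^{\gamma-1}}$$
for the free wave equation, applied to $v_j = \mathcal{K}^0_{\mu(k),\nu(k)} u_j$ on each mode. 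This free-wave estimate is classical under the admissibility hypothesis \eqref{2.22}.

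Next I would use Lemma \ref{continuous} to dispose of the conjugation factors: the boundedness of $\mathcal{K}^0_{\mu(k),\nu(k)}$ on $\dot H^\gamma_{p,\mathrm{rad}}$ and $\dot H^{\gamma-1}_{p,\mathrm{rad}}$ for the relevant $p$ (determined by the free-wave Strichartz $(q,r)$) is exactly what produces the restrictions on $\gamma$. The $\nu(0)$-constraint comes from the radial mode $k=0$; the $\nu(1)$-constraint from the first non-radial mode $k=1$, after which $\mathcal{K}^0_{\mu(k),\nu(k)}$ is bounded in a wider range as $k$ grows (so these modes pose no further obstruction). The asymmetry between $n = 2,3$ and $n \geq 4$ in the lower bounds on $\gamma$ tracks the endpoint/Keel--Tao admissibility on the free side and the need to stay in the continuous range of the Hankel multiplier.

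The main obstacle will be the passage from the mode-by-mode estimates to the full $L^q_t L^r_x$ estimate when $q \neq r$, since Minkowski's inequality only works in one direction. I would handle this by: (a) using the almost-orthogonality of the spherical decomposition on the initial-data side, where the $\dot H^\gamma$ norm squared splits into a sum over modes; and (b) on the solution side, combining the Littlewood--Paley--Stein square function on the sphere (Lemma \ref{square-expression2}, which needs $r < \infty$) with Minkowski between $\ell^2_{k,\ell}$ and $L^r_x$, $L^q_t$, exactly as in \cite{BPSS}. Uniformity in $k$ of the constants produced by Lemma \ref{continuous} on the relevant Lebesgue scale, together with the scaling identity $\sigma = \gamma + 1/q - n(1/2-1/r)$ to fix the dimensional count, closes the argument.
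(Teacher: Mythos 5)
The paper does not prove this lemma at all: it is quoted verbatim from Burq--Planchon--Stalker--Tahvildar-Zadeh \cite{BPSS}, so there is no internal proof to compare against. Judged on its own terms, your proposal is essentially the strategy of Planchon--Stalker--Tahvildar-Zadeh \cite{PSS}, and it contains the precise gap that made \cite{BPSS} necessary. Your reduction hinges on applying the conjugation operators $\mathcal{K}^0_{\mu(k),\nu(k)}$ on every spherical mode $k$ and then recombining. But Lemma \ref{continuous} gives $L^p$-boundedness of $\mathcal{K}^0_{\lambda,\nu}$ only for $\lambda=\mu(0)=\tfrac{n-2}{2}$, i.e.\ only on the radial subspace; the paper even flags this ("with $k=0$"). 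For $k\geq 1$ no $\dot H^{\beta}_{p,\mathrm{rad}}$ bound with $p\neq 2$ is provided, and --- more importantly --- even granting mode-wise boundedness, your recombination step needs the operator norms to be uniform (or at worst summable against the square function) in $k$. Your assertion that the higher modes "pose no further obstruction" because $\mathcal{K}^0_{\mu(k),\nu(k)}$ is bounded on a wider range as $k$ grows is exactly the unproven point: the range of exponents widens, but the size of the constants is not controlled, and this is the reason the mode-by-mode conjugation argument only yields the radial result (Corollary \ref{cor}) and not the full non-radial Lemma \ref{stri1}.

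The actual proof in \cite{BPSS} avoids conjugation on non-radial modes entirely: it localizes to unit frequency with respect to $P_a$ (by scaling), splits the solution into the regions $|x|\lesssim 1$ and $|x|\gtrsim 1$, treats $a|x|^{-2}$ as a short-range perturbation of the free wave equation away from the origin via Duhamel plus free Strichartz, and controls the region near the origin with weighted $L^2$ (local smoothing/Morawetz-type) estimates proved via the Hankel transform. The constraints on $\gamma$ involving $\nu(0)$ and $\nu(1)$ arise from those weighted estimates on the lowest two modes, not from $L^p$-bounds of the conjugation operators. If you want to salvage your outline, you would either have to prove uniform-in-$k$ $L^p$ bounds for $\mathcal{K}^0_{\mu(k),\nu(k)}$ (not known, and not what Lemma \ref{continuous} says) or restrict to radial data, in which case you recover only Corollary \ref{cor}.
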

Next, define the projectors $M_{jj'}=P_j\tilde{P}_{j'}$ and
$N_{jj'}=\tilde{P}_jP_{j'}$, where $P_j$ is the usual dyadic
frequency localization at $|\xi|\sim 2^{j}$ and $\tilde{P}_j$ is the
localization with respect to $\big(-\Delta+\frac
a{|x|^2}\big)^{\frac12}$. More precisely, let $f$ be in  the $k$-th
harmonic subspace, then
\begin{equation*}
P_j f=\mathcal{H}_{\mu(k)}\beta_j
\mathcal{H}_{\mu(k)}f\quad\text{and}\quad \tilde{P}_j
f=\mathcal{H}_{\nu(k)}\beta_j \mathcal{H}_{\nu(k)}f,
\end{equation*}
where $\beta_j(\xi)=\beta(2^{-j}|\xi|)$ with $\beta\in
C_0^\infty(\R^+)$ supported in $[\frac14,2]$.  Then, we have the
almost orthogonality estimate which is proved in \cite{BPSS}.
\begin{lemma}[Almost orthogonality estimate, \cite{BPSS}]\label{orthogonality}
There exists a positive constant $C$ independent of $j,j',$ and $k$
such that the following inequalities hold for all positive
$\epsilon_1<1+\min\{\frac{n-2}2, (\frac{(n-2)^2}4+a)^{\frac12}\}$
$$\|M_{j j'}f\|_{L^2(\R^n)},~~ \|N_{j j'}f\|_{L^2(\R^n)}\leq C
2^{-\epsilon_1|j-j'|}\|f\|_{L^2(\R^n)}, $$ where $f$ is in  the
$k$-th harmonic subspace.
\end{lemma}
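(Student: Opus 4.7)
The plan is to diagonalize via spherical harmonics, reduce to a one-dimensional spectral estimate uniform in the angular frequency, and then apply Schur's test to the explicit kernel. Writing $f = \sum_{k,\ell} a_{k,\ell}(r) Y_{k,\ell}(\theta)$, each projector $P_j$ and $\tilde P_{j'}$ preserves the $k$-th harmonic subspace and acts there as a Hankel multiplier of order $\mu(k)$, respectively $\nu(k)$. Combining Plancherel on $\mathbb{S}^{n-1}$ with the $L^2$-isometry of $\mathcal{H}_{\nu(k)}$ from Lemma \ref{Hankel3}(iii) reduces the bound on $M_{jj'}$ to the uniform-in-$k$ estimate
\begin{equation*}
\big\|\beta_j \,\mathcal{K}^0_{\mu(k),\nu(k)}\, \beta_{j'}\big\|_{L^2(r^{n-1}dr)\to L^2(r^{n-1}dr)} \lesssim 2^{-\epsilon_1 |j-j'|},
\end{equation*}
with $\mathcal{K}^0_{\mu,\nu} = \mathcal{H}_\mu \mathcal{H}_\nu$; the $N_{jj'}$ case follows by interchanging $\mu$ and $\nu$.

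The second step is to analyze the spectral kernel
\begin{equation*}
T(\rho,s) = (\rho s)^{-(n-2)/2}\int_0^\infty J_{\mu}(r\rho)\, J_{\nu}(rs)\, r\, dr
\end{equation*}
via the Weber--Schafheitlin formula (analytically continued, since the integral is only conditionally convergent at this weight). For $\rho > s > 0$ this evaluates, up to constants uniform in $k$, to
\begin{equation*}
T(\rho,s) \simeq c_{\mu,\nu}\, \rho^{-(n+2)/2-\nu}\, s^{\nu-(n-2)/2}\, F\Big(\tfrac{\mu+\nu+2}{2},\tfrac{\nu-\mu}{2};\nu+1;s^2/\rho^2\Big),
\end{equation*}
with the symmetric formula when $s > \rho$. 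Because the dyadic separation $|j-j'|\geq 1$ forces $s/\rho$ away from $1$, the hypergeometric factor is $O(1)$, yielding
\begin{equation*}
|T(\rho,s)| \lesssim \max(\rho,s)^{-(n+2)/2-\gamma}\, \min(\rho,s)^{\gamma-(n-2)/2},
\end{equation*}
with $\gamma = \nu(k)$ if $\rho>s$ and $\gamma=\mu(k)$ if $\rho<s$.

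Inserting the cutoffs $\beta_j(\rho), \beta_{j'}(s)$ localizing to $\rho\sim 2^j$, $s\sim 2^{j'}$, both the row and column sums for Schur's test against the measure $r^{n-1}dr$ collapse to geometric factors whose product is $2^{-2|j-j'|(\gamma+1)}$, so $\|\beta_j T\beta_{j'}\|_{L^2\to L^2} \lesssim 2^{-|j-j'|(\gamma+1)}$. Since $\gamma \geq \min(\mu(k),\nu(k)) \geq \min\{(n-2)/2,\sqrt{(n-2)^2/4+a}\}$ uniformly in $k$, this gives the estimate for any $\epsilon_1 < 1+\min\{(n-2)/2,\sqrt{(n-2)^2/4+a}\}$. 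The principal technical obstacle is justifying the Weber--Schafheitlin evaluation at the conditionally convergent endpoint and ensuring the constants $c_{\mu,\nu}$---which contain $1/\Gamma((\mu(k)-\nu(k))/2)$, a factor degenerating as $k\to\infty$---remain bounded. A cleaner route that bypasses this difficulty is to recast $\mathcal{K}^0_{\mu,\nu}$ as a Mellin multiplier with symbol $\Phi(\zeta) = \Gamma((\mu+1+\zeta)/2)\Gamma((\nu+1-\zeta)/2)/[\Gamma((\mu+1-\zeta)/2)\Gamma((\nu+1+\zeta)/2)]$, which is analytic in $|\mathrm{Re}\,\zeta| < 1+\min(\mu,\nu)$ (since $\Gamma$ is zero-free, the denominator never vanishes); a contour shift then produces exponential decay of the convolution kernel in logarithmic coordinates, and the desired estimate follows by Young's inequality against the dyadic cutoffs.
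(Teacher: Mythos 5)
The paper does not actually prove this lemma --- it is quoted verbatim from \cite{BPSS} (``which is proved in \cite{BPSS}''), so there is no in-paper argument to compare against. Your proposal is, in substance, the standard proof from that reference: conjugating $M_{jj'}=P_j\tilde P_{j'}$ by the $L^2$-isometries $\mathcal{H}_{\mu(k)},\mathcal{H}_{\nu(k)}$ to reduce to $\|\beta_j\mathcal{K}^0_{\mu(k),\nu(k)}\beta_{j'}\|_{L^2\to L^2}$ is exactly right, your kernel exponents are correct, and the Schur test does yield decay $2^{-|j-j'|(\gamma+1)}$ with $\gamma=\nu(k)$ or $\mu(k)$ according to the sign of $j-j'$; since both are increasing in $k$, the worst case is $k=0$ and you recover precisely the threshold $1+\min\{\tfrac{n-2}2,\sqrt{(n-2)^2/4+a}\}$ of the lemma. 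Two caveats you should make explicit. First, for $|j-j'|\lesssim 1$ the supports of $\beta_j$ and $\beta_{j'}$ may overlap, the hypergeometric factor has a logarithmic singularity at $z=1$ (here $c-a-b=0$), and the Weber--Schafheitlin evaluation degenerates; but in that range the trivial bound $\|M_{jj'}\|\le C$ (isometries composed with bounded multipliers) suffices, so only $|j-j'|\gg1$ needs the kernel. Second, in your Mellin alternative the symbol $\Phi(\sigma+i\tau)$ tends to a nonzero constant as $|\tau|\to\infty$ (by Stirling the Gamma ratios behave like $(|\mu+1+i\tau|/|\nu+1+i\tau|)^{\sigma}\to1$), so it is not integrable on any shifted line and the contour shift does not directly produce an $L^1$ kernel; you must first split off the multiple of the identity, which is harmless here because $\beta_j\cdot\mathrm{Id}\cdot\beta_{j'}=0$ for separated $j,j'$, and apply the decay argument only to the remainder. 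With those points addressed the argument is sound and uniform in $k$ (the constants $1/\Gamma(\tfrac{\mu-\nu}{2})$ in fact tend to $0$ as $k\to\infty$, so uniformity only improves).
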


As a consequence of Lemma \ref{stri1} and Lemma \ref{orthogonality},
we have
\begin{lemma}[Strichartz estimates for unit frequency]\label{stri}
Let $n\geq3$, $k\in \N$. Let $u$ solve
\begin{equation*}
\begin{cases}
(\partial_t^2-\Delta+\frac a{|x|^2})u=0, \\
u|_{t=0}=u_0(x),~u_t|_{t=0}=0,
\end{cases}
\end{equation*} where $u_0\in L^2(\R^n)$ and
$$u_0=\sum\limits_{k=0}^\infty\sum\limits_{\ell=1}^{{d}(k)}a_{k,\ell}(r)Y_{k,\ell}(\theta).$$
Assume that for all $k,\ell\in\N$,
$\text{supp}~\big[\mathcal{H}_{\nu(k)}a_{k,\ell}\big]\subset [1,2]$.
Then the following estimate holds for
$a>\frac1{(n-1)^2}-\frac{(n-2)^2}4$
\begin{equation}\label{2.24}
\|u(t,x)\|_{L^q(\R;L^r(\R^n))}\leq C\|u_0\|_{L^2(\R^n)},
\end{equation}
where $q\geq2,$ $\frac1q=\frac{n-1}2(\frac12-\frac1r)$ and $
(q,r,n)\neq(2,\infty,3)$.
\end{lemma}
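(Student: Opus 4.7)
The plan is to reduce the claim, via a Littlewood--Paley decomposition with respect to the free Laplacian combined with the almost orthogonality of Lemma \ref{orthogonality}, to the Strichartz estimate of Lemma \ref{stri1} applied at one suitably chosen Sobolev level. Because $\text{supp}(\mathcal{H}_{\nu(k)} a_{k,\ell}) \subset [1,2]$, the datum satisfies $u_0 = \tilde{P}_0 u_0$, so one may write
$$u_0 = \sum_{j\in\Z} P_j u_0 = \sum_{j\in\Z} M_{j,0} u_0 =: \sum_{j\in\Z} u_0^j.$$
Setting $u^j(t) = \cos\bigl(t\sqrt{-\Delta + a/|x|^2}\bigr) u_0^j$ and using linearity gives $u = \sum_j u^j$. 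Lemma \ref{orthogonality}, applied in each harmonic subspace and summed via orthogonality of $\{Y_{k,\ell}\}$, provides the crucial bound
$$\|u_0^j\|_{L^2(\R^n)} \leq C\, 2^{-\epsilon_1 |j|} \|u_0\|_{L^2(\R^n)}$$
for any $\epsilon_1 < 1 + \min\{\tfrac{n-2}{2}, \nu(0)\}$.

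With $\sigma = 0$ and $\gamma = \tfrac{n+1}{2}\bigl(\tfrac12 - \tfrac1r\bigr)$, the admissibility $\tfrac1q = \tfrac{n-1}{2}(\tfrac12 - \tfrac1r)$ is precisely the identity $\sigma = \gamma + \tfrac1q - n(\tfrac12 - \tfrac1r)$ required by Lemma \ref{stri1}. Since $(q,r,n)\neq(2,\infty,3)$, the requirement $q\geq 2$ forces $r<\infty$ in every remaining case, so Lemma \ref{stri1} applies to each $u^j$ (with vanishing first derivative). Combined with Bernstein's inequality for the $P_j$-localized datum $u_0^j$ one obtains
$$\|u^j\|_{L^q_t L^r_x} \leq C \|u_0^j\|_{\dot H^{\gamma}} \leq C\, 2^{j\gamma} \|u_0^j\|_{L^2} \leq C\, 2^{j\gamma - \epsilon_1 |j|} \|u_0\|_{L^2}.$$
Summing the geometric series over $j\in\Z$ yields the desired estimate, provided $|\gamma| < \epsilon_1$; this holds because $\gamma \geq 0$ and $\gamma_{\max} = \tfrac{n+1}{2(n-1)} < \tfrac{n}{n-1} \leq 1 + \min\{\tfrac{n-2}{2},\nu(0)\}$ (the worst case $\nu(0) = (n-1)^{-1}$ still leaves a strict margin since $n+1 < 2n$).

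The main obstacle is verifying that the chosen $\gamma$ lies in the admissible window of Lemma \ref{stri1}. The tightest upper constraint is $\gamma < 1 + \nu(0) - \tfrac1q$, which binds at the sharp corner $q=2$, occurring only when $n\geq 4$ and $r = \tfrac{2(n-1)}{n-3}$. There $\gamma = \tfrac12 + \tfrac1{n-1}$ and the constraint reduces to $\nu(0) > \tfrac1{n-1}$, equivalently $a > \tfrac1{(n-1)^2} - \tfrac{(n-2)^2}{4}$, which is exactly the hypothesis of the lemma; for $n=3$ the corresponding analysis reduces to $\nu(0) > \tfrac12$, which is $a > 0$. The remaining constraints $\gamma < \tfrac{n+1}{2}$ and $\gamma < \nu(1) + \tfrac12$, together with the lower bounds (trivial from $\gamma \geq 0$), hold with room for every admissible $r$. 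Once this window check is secured, the dyadic summation is routine and the proof is complete.
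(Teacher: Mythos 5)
Your proof is correct and follows essentially the same route as the paper's: apply Lemma \ref{stri1} with $\sigma=0$ and $\gamma=n(\tfrac12-\tfrac1r)-\tfrac1q=\tfrac{n+1}{q(n-1)}$, then use the almost orthogonality of Lemma \ref{orthogonality} (exploiting that $u_0$ is at unit frequency for the perturbed operator) to convert the $\dot H^{\gamma}$ norm into the $L^2$ norm via a convergent dyadic sum; whether one decomposes first and applies Strichartz to each $P_j u_0$ or applies Strichartz once and decomposes $\|u_0\|_{\dot H^{\gamma}}$ is immaterial. Your explicit verification of the window $\gamma<1+\nu(0)-\tfrac1q$, which is precisely where the hypothesis $a>\tfrac{1}{(n-1)^2}-\tfrac{(n-2)^2}{4}$ is used, is a welcome detail that the paper leaves implicit.
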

\begin{proof}
By making use of Lemma \ref{stri1} with
$\sigma=0,\gamma=n(\frac12-\frac1r)-\frac1q=\frac{n+1}{q(n-1)}$, we
obtain that $\|u\|_{L^q_t(\R;L^r(\R^n))}\leq C\|u_0\|_{\dot
H^{\gamma}}$. Since $0<\gamma\leq1$,  we have by Lemma
\ref{orthogonality} with $\epsilon_1=1+$,
\begin{equation*}
\begin{split}\|u\|_{L^q_t(\R;L^r(\R^n))}&\leq C\Big(\sum_{j\in\Z}
2^{2j\gamma}\|P_j u_0\|^2_{ L^{2}}\Big)^{\frac12}=
C\Big(\sum_{j\in\Z} 2^{2j\gamma}\Big\|
\sum_{k=0}^\infty\sum_{\ell=1}^{{d}(k)}P_j\big(a_{k,\ell}(r)Y_{k,\ell}(\theta)\big)\Big\|^2_{
L^{2}}\Big)^{\frac12}\\&=C\Big(\sum_{j\in\Z} 2^{2j\gamma}\Big\|
\sum_{k=0}^\infty\sum_{\ell=1}^{{d}(k)}P_j\tilde{P}_1\big(a_{k,\ell}(r)Y_{k,\ell}(\theta)\big)\Big\|^2_{
L^{2}}\Big)^{\frac12} \\&\leq C\Big(\sum_{j\in\Z}
2^{2j\gamma-2\epsilon_1|j-1|}\|u_0\|^2_{ L^{2}}\Big)^{\frac12}\leq
C\|u_0\|_{ L^{2}}.
\end{split}
\end{equation*}
This completes the proof of Lemma \ref{stri}.
\end{proof}

\section{Estimates of Hankel transforms}
In this section, we prove some estimates for the Hankel transforms
of order $\nu(k)$. These estimates will be utilized to prove the
main results in the next section.
\begin{proposition}\label{Hankel1}
Let  $k\in \N, 1\leq\ell\leq d(k)$ and let $\varphi$ be a smooth
function supported in the interval $I:=[\frac12,2]$. Then
\begin{equation}\label{3.1}
\begin{split}
\Big\|\int_0^\infty e^{- it\rho} &{J}_{\nu(k)}( r\rho)
b^0_{k,\ell}(\rho) \varphi(\rho)\mathrm{d}\rho
\Big\|_{L^2_t(\R;L^2_{r}([R,2R]))}\leq
C\min\big\{R^{\frac12},1\big\} \|b^0_{k,\ell}(\rho)\|_{L^2_\rho(I)},
\end{split}
\end{equation}
where $R\in 2^{\Z}$ and $C$ is a constant independent of $R, k,$ and
$\ell$.
\end{proposition}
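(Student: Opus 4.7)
\medskip
\noindent\textbf{Proof plan for Proposition \ref{Hankel1}.} The natural approach is to apply Plancherel's theorem in the time variable so as to undo the Fourier transform in $\rho$, reducing the estimate to a pointwise inequality for an integral of $|J_{\nu(k)}|^2$. Writing
$$ F(t,r) = \int_0^\infty e^{-it\rho} J_{\nu(k)}(r\rho) b^0_{k,\ell}(\rho)\varphi(\rho)\,d\rho, $$
I regard $F(\cdot, r)$, for each fixed $r$, as the Fourier transform in $\rho$ of the compactly supported function $\mathbf{1}_{\rho>0}\,J_{\nu(k)}(r\rho) b^0_{k,\ell}(\rho)\varphi(\rho)$. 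By Plancherel in $t$ and then Fubini,
$$\|F\|_{L^2_t L^2_r([R, 2R])}^2 \;\lesssim\; \int_{1/2}^{2} |b^0_{k,\ell}(\rho)|^2 |\varphi(\rho)|^2 \left(\int_R^{2R} |J_{\nu(k)}(r\rho)|^2\,dr\right) d\rho.$$
It therefore suffices to show that $\int_R^{2R} |J_{\nu(k)}(r\rho)|^2\,dr \le C\min\{R,1\}$ uniformly for $\rho\in[1/2,2]$, with $C$ independent of $R,k,\ell$. Changing variables $s=r\rho$ (so $ds = \rho\,dr$ with $\rho\sim 1$), this is equivalent to
$$\int_{R\rho}^{2R\rho} |J_{\nu(k)}(s)|^2 \, ds \;\leq\; C\min\{R,1\},$$
where the interval of integration is contained in $[R/2,4R]$.

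The heart of the argument is therefore the Bessel integral estimate, which I handle by splitting according to the relative sizes of $R$ and $\nu(k)$. For $R\geq 1$, I appeal directly to Lemma \ref{Bessel}: on the region $s\geq 2\nu(k)$ the bound $|J_{\nu(k)}(s)|^2\lesssim s^{-1}$ integrates to $O(1)$; on the transition region $\nu(k)/2 \le s \le 2\nu(k)$, the Airy-type estimate $|J_{\nu(k)}(s)|^2\lesssim \nu(k)^{-2/3}(1+\nu(k)^{-1/3}|s-\nu(k)|)^{-1/2}$ integrates (via $u=\nu(k)^{-1/3}(s-\nu(k))$) to $O(1)$ as well; and on $s\le \nu(k)/2$ the exponential decay \eqref{2.8} makes the contribution negligible. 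For $R\le 1$, I use the small-argument bound \eqref{2.4} to obtain $|J_{\nu(k)}(s)|^2\lesssim s^{2\nu(k)}$, giving an integral of size $\lesssim R^{2\nu(k)+1}\leq R$ since $\nu(k)\geq \nu(0) > 0$ under the standing assumption $a > (n-1)^{-2}-(n-2)^2/4$. (For $\nu(k)$ large and $R$ much smaller than $\nu(k)$, one can alternatively just use \eqref{2.8} again for an even stronger bound.)

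The main technical point is to verify that the three Bessel regimes glue together uniformly in $k$ across the transition $s\sim\nu(k)$; here the Airy-type bound is the borderline case that saturates exactly an $O(1)$ contribution on an interval of length $\sim \nu(k)$, which is why the right-hand side is $\min\{R,1\}$ rather than something smaller. Since all bounds are uniform in $k$ and the decomposition into regimes is exhaustive, combining them with the Plancherel reduction above yields \eqref{3.1}.
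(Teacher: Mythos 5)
Your proposal is correct and follows essentially the same route as the paper: Plancherel in $t$ to reduce to a uniform bound on $\int_R^{2R}|J_{\nu(k)}(r\rho)|^2\,dr$, the small-argument bound \eqref{2.4} for $R\lesssim 1$ (using $\nu(k)>0$), and the three-regime decomposition of Lemma \ref{Bessel} (exponential decay, Airy-type transition bound, oscillatory asymptotics) for $R\gg 1$. No gaps.
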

\begin{proof}
Using the Plancherel theorem in $t$, we have
\begin{equation}\label{3.2}
\begin{split}
\text{L.H.S of }~\eqref{3.1}\lesssim \Big\| \big\|J_{\nu(k)}( r\rho)
b^0_{k,\ell}(\rho) \varphi(\rho)\big\|_{L^{2}_\rho}
\Big\|_{L^2_{r}([R,2R])}.
\end{split}
\end{equation}

We first consider the case $R\lesssim1$. Since $\nu(k)>0$, one has
by \eqref{2.4}
\begin{equation}\label{3.3}
\begin{split}
\text{L.H.S of}~\eqref{3.1}&\lesssim \big\|
b^0_{k,\ell}(\rho)\big\|_{L^{2}_\rho(I)}\Big(\int_R^{2R} \Big|\frac{
r^{\nu(k)}}{2^{\nu(k)}\Gamma(\nu(k)+\frac12)\Gamma(\frac12)}\Big|^{2}
\mathrm{d}r\Big)^{\frac12}\\&\lesssim R^{\frac
12}\big\|b^0_{k,\ell}(\rho)\big\|_{L^2_\rho(I)}.
\end{split}
\end{equation}

Next we consider the case $R\gg 1$. It follows from \eqref{3.2} that
\eqref{3.1} can be reduced to show
\begin{equation}\label{3.4}
\int_R^{2R}|J_{k}(r)|^2\mathrm{d}r\leq C,\quad R\gg 1,
\end{equation}
where the constant $C$ is independent of $k$ and $R$. To prove
\eqref{3.4}, we write
\begin{equation*}
\begin{split}
\int_R^{2R}|J_{k}(r)|^2\mathrm{d}r=\int_{I_1}|J_{k}(r)|^2\mathrm{d}r
+\int_{I_2}|J_{k}(r)|^2\mathrm{d}r+\int_{I_3}|J_{k}(r)|^2\mathrm{d}r
\end{split}
\end{equation*}
where $$I_1=[R,2R]\cap[0,\frac k 2],\quad I_2=[R,2R]\cap[\frac k
2,2k]\quad \text{and}\quad I_3=[R,2R]\cap[2k,\infty].$$

Using \eqref{2.8} and \eqref{2.10} in Lemma \ref{Bessel}, we have
\begin{equation}\label{3.5}
\begin{split}
\int_{I_1}|J_{k}(r)|^2\mathrm{d}r\leq C
\int_{I_1}e^{-cr}\mathrm{d}r\leq C e^{-cR},
\end{split}
\end{equation}
and \begin{equation}\label{3.6}
\begin{split}\int_{I_3}|J_{k}(r)|^2\mathrm{d}r\leq C.\end{split}
\end{equation}
On the other hand, one has by \eqref{2.9}
\begin{equation*}
\begin{split}
\int_{[\frac k 2,2k]}|J_{k}(r)|^2\mathrm{d}r&\leq C \int_{[\frac k
2,2k]}k^{-\frac23}(1+k^{-\frac13}|r-k|)^{-\frac 12}\mathrm{d}r\leq
C.
\end{split}
\end{equation*}
Observing $[R,2R]\cap[\frac k 2,2k]=\emptyset$ unless $R\sim k$, we
obtain
\begin{equation}\label{3.7}
\begin{split}
\int_{I_2}|J_{k}(r)|^2\mathrm{d}r\leq C.
\end{split}
\end{equation}
This together with \eqref{3.5} and \eqref{3.6} yields \eqref{3.1}.
\end{proof}

\begin{proposition}\label{Hankel2}Let $\gamma\geq2$ and let $k\in \N, 1\leq\ell\leq d(k)$.
Suppose $\text{supp}~b^0_{k,\ell}(\rho)\subset I:=[1,2]$.  Then
\begin{equation}\label{3.8}
\begin{split}
\Big\|\mathcal{H}_{\nu(k)}&\big[\cos(
t\rho)b^0_{k,\ell}(\rho)\big](r)\Big\|_{L^2_t(\R;L^\gamma_{r^{n-1}\mathrm{d}r}([R,2R]))}
\\&\leq C \min\Big\{R^{\frac{(n+1)+(\gamma-2)\nu(k)}\gamma-\frac{n-1}
2}, R^{\frac{n-1}\gamma-\frac{n-2}2}\Big\}
\|b^0_{k,\ell}(\rho)\|_{L^2_\rho(I)},
\end{split}
\end{equation}
where $R\in 2^{\Z}$ and $C$ is a constant independent of $R, k$ and
$\ell$.
\end{proposition}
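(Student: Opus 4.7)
My strategy is to establish the bound at the two endpoints $\gamma=2$ and $\gamma=\infty$ and interpolate via Riesz--Thorin. First I would decompose $\mathcal{H}_{\nu(k)}[\cos(t\rho)\,b^0_{k,\ell}(\rho)](r) = r^{-(n-2)/2}F(t,r)$, where $F(t,r)=\int_0^\infty J_{\nu(k)}(r\rho)\cos(t\rho)\rho^{n/2}b^0_{k,\ell}(\rho)\,d\rho$. Since $r\sim R$ on $[R,2R]$, the prefactor $r^{-(n-2)/2}\sim R^{-(n-2)/2}$ extracts cleanly, reducing the task to bounding $\|F\|_{L^2_t L^\gamma_{r^{n-1}dr}([R,2R])}$ and multiplying by $R^{-(n-2)/2}$.

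The $\gamma=2$ endpoint follows directly from Proposition \ref{Hankel1} (after writing $\cos(t\rho)=\tfrac12(e^{it\rho}+e^{-it\rho})$ and absorbing $\rho^{n/2}\sim 1$ on $I$): one obtains $\|F\|_{L^2_tL^2_r([R,2R])}\lesssim\min(R^{1/2},1)\,\|b^0_{k,\ell}\|_{L^2_\rho(I)}$, and combining with the weight-change factor $R^{(n-1)/2}$ (passing from $dr$ to $r^{n-1}dr$) and the prefactor $R^{-(n-2)/2}$ yields the Hankel-norm bound $\min(R,R^{1/2})$, matching both claimed exponents at $\gamma=2$.

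For the $\gamma=\infty$ endpoint, I would use the one-dimensional calculus inequality $\|f\|_{L^\infty([R,2R])}^2\lesssim R^{-1}\|f\|_{L^2}^2+\|f\|_{L^2}\|\partial_r f\|_{L^2}$ pointwise in $t$, then integrate in $t$ and apply Cauchy--Schwarz to obtain
$$\|F\|_{L^2_tL^\infty_r([R,2R])}^2\lesssim R^{-1}\|F\|_{L^2_{t,r}}^2+\|F\|_{L^2_{t,r}}\|\partial_rF\|_{L^2_{t,r}}.$$
Plancherel in $t$ reduces $\|F\|_{L^2_{t,r}}^2$ and $\|\partial_rF\|_{L^2_{t,r}}^2$ to Bessel-function integrals in $r$, which I would control via \eqref{2.4} and \eqref{2.5} for $R\lesssim 1$ and via Lemma \ref{Bessel} together with Lemma \ref{Bessel2} ($|J'_{\nu(k)}(r)|\lesssim r^{-1/2}$) for $R\gtrsim 1$.

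Finally, Riesz--Thorin interpolation with $\theta=1-2/\gamma$ between the two endpoints, followed by a direct exponent comparison, shows that the interpolated estimate is dominated by $\min\{R^{\frac{(n+1)+(\gamma-2)\nu(k)}{\gamma}-\frac{n-1}{2}},R^{\frac{n-1}{\gamma}-\frac{n-2}{2}}\}$ in each regime $R\lesssim 1$ (where the $R^{\nu(k)}$ factor from the small-argument Bessel bound yields Term~1) and $R\gtrsim 1$ (where the $R^{-1/2}$ decay from the large-argument asymptotic yields Term~2). The principal technical obstacle lies in the transition zone $R\sim\nu(k)\gg 1$, where the Airy-type behavior of $J_{\nu(k)}$ makes neither asymptotic of Lemma \ref{Bessel} sharp; here the uniform-in-$k$ integrability $\int_R^{2R}|J_{\nu(k)}(r)|^2\,dr\lesssim 1$ (established in the proof of Proposition \ref{Hankel1}) is what guarantees that the resulting bound remains uniform in $k$ and $\ell$.
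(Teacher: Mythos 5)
Your plan coincides with the paper's own proof: both split into the regimes $R\lesssim 1$ and $R\gg 1$, obtain the $L^2_r$ endpoint from Proposition \ref{Hankel1}, obtain the $L^\infty_r$ endpoint from the one-dimensional Sobolev embedding on $[R,2R]$ together with $L^2$ bounds on $J_{\nu(k)}$ and $J'_{\nu(k)}$ (via \eqref{2.4}--\eqref{2.5} for small argument and Lemmas \ref{Bessel}, \ref{Bessel2} for large argument), and then interpolate in $\gamma$. The proposal is correct and essentially identical in structure to the paper's argument.
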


\begin{proof}
We first consider the case $R\gg 1$. Using the definition of Hankel
transform and the interpolation, we only need to prove
\begin{equation}\label{3.9}
\begin{split}
\Big\|\int_0^\infty e^{- it\rho} {J}_{\nu(k)}(
r\rho)b^0_{k,\ell}(\rho)
(r\rho)^{-\frac{n-2}2}&\rho^{n-1}\mathrm{d}\rho\Big\|_{L^2_t(\R;L^2_{r^{n-1}\mathrm{d}r}([R,2R]))}\\&
\lesssim R^{\frac1 2}\| b^0_{k,\ell}(\rho)\|_{L^2_\rho},
\end{split}
\end{equation}
and
\begin{equation}\label{3.10}
\begin{split}
\Big\|\int_0^\infty e^{- it\rho} {J}_{\nu(k)}(
r\rho)b^0_{k,\ell}(\rho)
(r\rho)^{-\frac{n-2}2}&\rho^{n-1}\mathrm{d}\rho\Big\|_{L^2_t(\R;L^\infty_{r^{n-1}\mathrm{d}r}([R,2R]))}\\&
\lesssim R^{-\frac{n-2}2}\| b^0_{k,\ell}(\rho)\|_{L^2_\rho}.
\end{split}
\end{equation}
\eqref{3.9} follows from Proposition \ref{Hankel1}. To prove
\eqref{3.10}, it is enough to show that there exists a constant $C$
independent of $k,\ell$ such that
\begin{equation}\label{3.11}
\begin{split}
\Big\|\int_0^\infty e^{-it\rho} J_{\nu(k)}( r\rho)
b^0_{k,\ell}(\rho) \varphi(\rho)\mathrm{d}\rho
\Big\|_{L^2_t(\R;L^\infty_{r}([R,2R]))}\leq C
\|b^0_{k,\ell}(\rho)\|_{L^2_\rho(I)}.
\end{split}
\end{equation}

By the Sobolev embedding $H^1(\Omega)\hookrightarrow
L^\infty(\Omega)$ with $\Omega=[R,2R]$, it suffices to show
\begin{equation}\label{3.12}
\begin{split}
\Big\|\int_0^\infty e^{- it\rho} J_{\nu(k)}(r\rho)
b^0_{k,\ell}(\rho) &\varphi(\rho)\mathrm{d}\rho
\Big\|_{L^2_t(\R;L^2_{r}([R,2R]))}\leq C
\|b^0_{k,\ell}(\rho)\|_{L^2_\rho(I)},
\end{split}
\end{equation}
and
\begin{equation}\label{3.13}
\begin{split}
\Big\|\int_0^\infty e^{- it\rho}  J^{\prime}_{\nu(k)}(r\rho)
b^0_{k,\ell}(\rho) \rho \varphi(\rho)\mathrm{d}\rho
\Big\|_{L^2_t(\R;L^2_{r}([R,2R]))}\leq C
\|b^0_{k,\ell}(\rho)\|_{L^2_\rho(I)}.
\end{split}
\end{equation}
In fact,  \eqref{3.12} follows from Proposition \ref{Hankel1}, we
apply the Plancherel theorem in $t$ and Lemma 2.2 to showing
\eqref{3.13}.

Secondly, we consider the case $R\lesssim1$. From the definition of
Hankel transform, we need to prove
\begin{equation}\label{3.14}
\begin{split}
\Big\|\int_0^\infty e^{- it\rho} {J}_{\nu(k)}(
r\rho)&b^0_{k,\ell}(\rho)
(r\rho)^{-\frac{n-2}2}\rho^{n-1}\mathrm{d}\rho\Big\|_{L^2_t(\R;L^\gamma_{r}([R,2R]))}
\\&\lesssim R^{\frac{2+(\gamma-2)\nu(k)}\gamma-\frac{n-1} 2}\|
b^0_{k,\ell}(\rho)\|_{L^2_\rho}.
\end{split}
\end{equation}
On the other hand, we have by Proposition \ref{Hankel1}
\begin{equation}\label{3.15}
\begin{split}
\Big\|\int_0^\infty e^{-it\rho} {J}_{\nu(k)}(
r\rho)b^0_{k,\ell}(\rho)
&(r\rho)^{-\frac{n-2}2}\rho^{n-1}\mathrm{d}\rho\Big\|_{L^2_t(\R;L^2_{r}([R,2R]))}\\&
\lesssim R^{-\frac{n-3} 2}\| b^0_{k,\ell}(\rho)\|_{L^2_\rho}.
\end{split}
\end{equation}

By interpolation, it suffices to prove the estimate
\begin{equation}\label{3.16}
\begin{split}
\Big\|\int_0^\infty e^{- it\rho} {J}_{\nu(k)}(
r\rho)b^0_{k,\ell}(\rho)
&(r\rho)^{-\frac{n-2}2}\rho^{n-1}\mathrm{d}\rho\Big\|_{L^2_t(\R;L^\infty_{r}([R,2R]))}\\&
\lesssim R^{-\frac{n-1} 2+\nu(k)}\| b^0_{k,\ell}(\rho)\|_{L^2_\rho}.
\end{split}
\end{equation}
Indeed, using Sobolev embedding, we can prove \eqref{3.16} by
showing
\begin{equation*}
\begin{split}
\Big\|\int_0^\infty e^{- it\rho} J_{\nu(k)}(r\rho)
b^0_{k,\ell}(\rho) &\varphi(\rho)\mathrm{d}\rho
\Big\|_{L^2_t(\R;L^2_{r}([R,2R]))}\leq C R^{\frac12+\nu(k)}
\|b^0_{k,\ell}(\rho)\|_{L^2_\rho(I)},
\end{split}
\end{equation*}
and
\begin{equation*}
\begin{split}
\Big\|\int_0^\infty e^{- it\rho}  J^{\prime}_{\nu(k)}( r\rho)
b^0_{k,\ell}(\rho) \rho \varphi(\rho)\mathrm{d}\rho
\Big\|_{L^2_t(\R;L^2_{r}([R,2R]))}\leq C R^{\nu(k)-\frac12}
\|b^0_{k,\ell}(\rho)\|_{L^2_\rho(I)}.
\end{split}
\end{equation*}
These two estimates are implied by \eqref{2.4} and \eqref{2.5}.
Therefore, we conclude this proposition.

\end{proof}

\section{Proof of Theorem \ref{thm}}
In this section, we use Proposition \ref{Hankel1} and Proposition
\ref{Hankel2} to prove Theorem \ref{thm}. We first consider the
Cauchy problem:
\begin{equation}\label{4.1}
\begin{cases}
(\partial_{tt}-\Delta +\frac{a}{|x|^2})u(x,t)=0,\\
u(x,0)=u_0(x),~\partial_tu(x,0)=0.
\end{cases}
\end{equation}
We use the spherical harmonic expansion to write
\begin{equation}\label{4.2}
u_0(x)=\sum_{k=0}^{\infty}\sum_{\ell=1}^{d(k)}a^0_{k,\ell}(r)Y_{k,\ell}(\theta).
\end{equation}
Then we have the following proposition:
\begin{proposition}\label{pro}
Let $\gamma=\frac{2(n-1)}{n-2}+$ and suppose
$\text{supp}~\big(\mathcal{H}_{\nu}a^0_{k,\ell}\big)\subset [1,2]$
for all $k,\ell\in\N$ and $1\leq\ell\leq d(k)$. Then
\begin{equation}\label{4.3}
\begin{split}
\|u(x,t)\|_{L^2_tL^\gamma_{r^{n-1}\mathrm{d}r}L^2(\mathbb{S}^{n-1})}\leq
C\|u_0\|_{L^2_x}.
\end{split}
\end{equation}
\end{proposition}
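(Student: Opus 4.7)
My strategy is to combine the spherical harmonic expansion with a dyadic decomposition in the radial variable, so that Proposition \ref{Hankel2} can be applied annulus by annulus and then summed.

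\textbf{Step 1: Explicit representation of $u$.} Since $\partial_t u|_{t=0}=0$, formulas \eqref{2.16}--\eqref{2.20} together with the functional calculus for $A_{\nu(k)}$ give
\begin{equation*}
u(t,x)=\sum_{k=0}^{\infty}\sum_{\ell=1}^{d(k)}Y_{k,\ell}(\theta)\,F_{k,\ell}(t,r),\qquad F_{k,\ell}(t,r):=\mathcal{H}_{\nu(k)}\bigl[\cos(t\rho)\,b^0_{k,\ell}(\rho)\bigr](r),
\end{equation*}
where $b^0_{k,\ell}=\mathcal{H}_{\nu(k)}a^0_{k,\ell}$ is supported in $[1,2]$ by hypothesis.

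\textbf{Step 2: Peeling off the angular norm.} Orthonormality of $\{Y_{k,\ell}\}$ in $L^2(\mathbb{S}^{n-1})$ yields $\|u(t,r\cdot)\|_{L^2(\mathbb{S}^{n-1})}^2=\sum_{k,\ell}|F_{k,\ell}(t,r)|^2$. Since $\gamma\geq 2$, Minkowski's inequality for mixed norms ($L^\gamma(\ell^2)\hookrightarrow \ell^2(L^\gamma)$), followed by Fubini in $(t,k,\ell)$, yields
\begin{equation*}
\|u\|_{L^2_t L^\gamma_{r^{n-1}\mathrm{d}r} L^2(\mathbb{S}^{n-1})}^2 \leq \sum_{k,\ell}\|F_{k,\ell}\|_{L^2_t L^\gamma_{r^{n-1}\mathrm{d}r}(\R^+)}^2.
\end{equation*}

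\textbf{Step 3: Dyadic summation in $r$.} Write $\R^+=\bigcup_{R\in 2^{\Z}}[R,2R]$. Using $(\sum_R a_R)^{2/\gamma}\leq \sum_R a_R^{2/\gamma}$ (valid since $2/\gamma\leq 1$) gives
\begin{equation*}
\|F_{k,\ell}\|_{L^2_t L^\gamma_{r^{n-1}\mathrm{d}r}(\R^+)}^2 \leq \sum_{R\in 2^{\Z}} \|F_{k,\ell}\|_{L^2_t L^\gamma_{r^{n-1}\mathrm{d}r}([R,2R])}^2.
\end{equation*}
Now apply Proposition \ref{Hankel2} with the two exponents $\alpha_k:=\frac{(n+1)+(\gamma-2)\nu(k)}{\gamma}-\frac{n-1}{2}$ and $\beta:=\frac{n-1}{\gamma}-\frac{n-2}{2}$. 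The choice $\gamma=\frac{2(n-1)}{n-2}+$ forces $\beta<0$ (by a hair), while a direct computation using $\nu(k)\geq\nu(0)=\sqrt{((n-2)/2)^2+a}>0$ and the hypothesis on $a$ shows $\alpha_k>0$ for every $k\geq 0$; indeed at $k=0$ one gets $\alpha_0=\frac{n-3+2\nu(0)}{2(n-1)}+O(\epsilon)>0$, and $\alpha_k$ only grows with $k$. Hence the geometric series
\begin{equation*}
\sum_{R\leq 1}R^{2\alpha_k}+\sum_{R\geq 1}R^{2\beta}
\end{equation*}
converges, \emph{uniformly in $k$}, to a finite constant. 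This yields the key uniform estimate $\|F_{k,\ell}\|_{L^2_t L^\gamma_{r^{n-1}\mathrm{d}r}(\R^+)}\leq C\,\|b^0_{k,\ell}\|_{L^2_\rho(I)}$.

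\textbf{Step 4: Plancherel.} By the $L^2$ isometry of $\mathcal{H}_{\nu(k)}$ (Lemma \ref{Hankel3}) and angular orthogonality, $\sum_{k,\ell}\|b^0_{k,\ell}\|_{L^2(\rho^{n-1}\mathrm{d}\rho)}^2=\|u_0\|_{L^2(\R^n)}^2$; since $\mathrm{supp}\,b^0_{k,\ell}\subset[1,2]$, this agrees with $\sum_{k,\ell}\|b^0_{k,\ell}\|_{L^2_\rho}^2$ up to a harmless constant. Assembling Steps 2--4 gives \eqref{4.3}.

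The main delicate point is Step 3: verifying that the small-$R$ exponent $\alpha_k$ remains bounded away from $0$ \emph{uniformly} in $k$, which is where the assumption $a>\frac{1}{(n-1)^2}-\frac{(n-2)^2}{4}$ (equivalently, $\nu(0)$ strictly positive and not too small when $n=3$) and the strict inequality $\gamma>\frac{2(n-1)}{n-2}$ are both used in an essential way.
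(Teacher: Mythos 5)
Your proposal is correct and follows essentially the same route as the paper: the same Hankel-transform representation of $u$, reduction to an $\ell^2_{k,\ell}$ sum of the radial pieces via Minkowski/embedding, dyadic decomposition in $r$ with Proposition \ref{Hankel2} applied on each annulus, summation of the resulting geometric series (whose uniform convergence in $k$ you verify more explicitly than the paper does), and Plancherel for $\mathcal{H}_{\nu(k)}$ to recover $\|u_0\|_{L^2}$. The only differences are cosmetic (the order in which Minkowski and the dyadic decomposition are applied).
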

\begin{proof}
Let us consider the equation \eqref{4.1} in polar coordinates. Write
$v(t,r,\theta)=u(t,r\theta)$ and $g(r,\theta)=u_0(r\theta)$. Then
$v(t,r,\theta)$ satisfies that
\begin{equation}\label{4.4}
\begin{cases}
\partial_{tt}
v-\partial_{rr}v-\frac{n-1}r\partial_rv-\frac1{r^2}\Delta_{\theta}v+\frac{a}{r^2}v=0,\\
v(0,r,\theta)=g(r,\theta),\quad\partial_t v(0,r,\theta)=0.
\end{cases}
\end{equation}
By \eqref{4.2}, we also have
\begin{equation*}
g(r,\theta)=\sum_{k=0}^{\infty}\sum_{\ell=1}^{d(k)}a^0_{k,\ell}(r)Y_{k,\ell}(\theta).
\end{equation*}
Using separation of variables, we can write $v$ as a superposition
\begin{equation}\label{4.5}
v(t,r,\theta)=\sum_{k=0}^{\infty}\sum_{\ell=1}^{d(k)}v_{k,\ell}(t,r)Y_{k,\ell}(\theta),
\end{equation}
where $v_{k,\ell}$ satisfies the following equation
\begin{equation*}
\begin{cases}
\partial_{tt}
v_{k,\ell}-\partial_{rr}v_{k,\ell}-\frac{n-1}r\partial_rv_{k,\ell}+\frac{k(k+n-2)+a}{r^2}v_{k,\ell}=0 \\
v_{k,\ell}(0,r)=a^0_{k,\ell}(r),\qquad\partial_t v_{k,\ell}(0,r)=0
\end{cases}
\end{equation*}
for each $k,\ell\in \N,$ and $1\leq\ell\leq d(k)$. From the
definition of $A_{\nu}$, it becomes
\begin{equation}\label{4.6}
\begin{cases}
\partial_{tt}
v_{k,\ell}+A_{\nu(k)}v_{k,\ell}=0, \\
v_{k,\ell}(0,r)=a^0_{k,\ell}(r),\qquad\partial_t v_{k,\ell}(0,r)=0.
\end{cases}
\end{equation}
Applying the Hankel transform to the equation \eqref{4.6}, we have
by Lemma \ref{Hankel3}
\begin{equation}\label{4.7}
\begin{cases}
\partial_{tt}
\tilde{ v}_{k,\ell}+\rho^2\tilde{v}_{k,\ell}=0, \\
\tilde{v}_{k,\ell}(0,\rho)=b^0_{k,\ell}(\rho),\qquad\partial_t\tilde{v}_{k,\ell}(0,\rho)=0,
\end{cases}
\end{equation}
where
\begin{equation}\label{4.8}
\tilde{v}_{k,\ell}(t,\rho)=(\mathcal{H}_{\nu}
v_{k,\ell})(t,\rho),\quad
b^0_{k,\ell}(\rho)=(\mathcal{H}_{\nu}a^0_{k,\ell})(\rho).
\end{equation}
Solving this ODE and using the inverse Hankel transform, we obtain
\begin{equation*}
\begin{split}
v_{k,\ell}(t,r)&=\int_0^\infty(r\rho)^{-\frac{n-2}2}J_{\nu(k)}(r\rho)\tilde{v}_{k,\ell}(t,\rho)\rho^{n-1}\mathrm{d}\rho\\
&=\frac1{2}\int_0^\infty(r\rho)^{-\frac{n-2}2}J_{\nu(k)}(r\rho)\big(e^{
it\rho}+e^{-it\rho}\big)b^0_{k,\ell}(\rho)\rho^{n-1}\mathrm{d}\rho.
\end{split}
\end{equation*}
Therefore, we get
\begin{equation}\label{4.9}
\begin{split} &u(x,t)=v(t,r,\theta)\\&=\sum_{k=0}^{\infty}\sum_{\ell=1}^{d(k)}Y_{k,\ell}(\theta)\int_0^\infty(r\rho)^{-\frac{n-2}2}J_{\nu(k)}(r\rho)\cos(
t\rho)b^0_{k,\ell}(\rho)\rho^{n-1}\mathrm{d}\rho\\&=\sum_{k=0}^{\infty}\sum_{\ell=1}^{d(k)}Y_{k,\ell}(\theta)\mathcal{H}_{\nu(k)}\big[\cos(
t\rho)b^0_{k,\ell}(\rho)\big](r).
\end{split}
\end{equation}
To prove \eqref{4.3}, it suffices to show
\begin{equation}\label{4.10}
\begin{split}
\Big\|\Big(\sum_{k=0}^{\infty}\sum_{\ell=1}^{d(k)}\big|\mathcal{H}_{\nu(k)}\big[\cos(
t\rho)b^0_{k,\ell}(\rho)\big](r)\big|^2\Big)^{\frac12}
\Big\|_{L^2_t(\R;L^\gamma_{r^{n-1}\mathrm{d}r}(\R^+))}\leq
C\|u_0\|_{L^2_x}.
\end{split}
\end{equation}
Using the dyadic decomposition, we have by $\ell^{2}\hookrightarrow
\ell^{\gamma}(\gamma>2)$
\begin{equation}\label{4.11}
\begin{split}
&\Big\|\Big(\sum_{k=0}^{\infty}\sum_{\ell=1}^{d(k)}\big|\mathcal{H}_{\nu(k)}\big[\cos(
t\rho)b^0_{k,\ell}(\rho)\big](r)\big|^2\Big)^{\frac12}
\Big\|^2_{L^2_t(\R;L^\gamma_{r^{n-1}\mathrm{d}r}(\R^+))}\\&=
\Big\|\Big(\sum_{R\in2^{\Z}}\Big\|\Big(\sum_{k=0}^{\infty}\sum_{\ell=1}^{d(k)}\big|\mathcal{H}_{\nu(k)}\big[\cos(
t\rho)b^0_{k,\ell}(\rho)\big](r)\big|^2\Big)^{\frac12}\Big\|^\gamma_{L^\gamma_{r^{n-1}\mathrm{d}r}([R,2R])}\Big)^{\frac1\gamma}\Big\|^2_{L^2_t(\R)}
\\&\lesssim
\sum_{R\in2^{\Z}}\sum_{k=0}^{\infty}\sum_{\ell=1}^{d(k)}\Big\|\mathcal{H}_{\nu(k)}\big[\cos(
t\rho)b^0_{k,\ell}(\rho)\big](r)\Big\|^2_{L^2_t(\R;L^\gamma_{r^{n-1}\mathrm{d}r}([R,2R]))}.
\end{split}
\end{equation}
By Proposition \ref{Hankel2}, we obtain
\begin{equation}\label{4.12}
\begin{split}
&\Big\|\Big(\sum_{k=0}^{\infty}\sum_{\ell=1}^{d(k)}\big|\mathcal{H}_{\nu(k)}\big[\cos(
t\rho)b^0_{k,\ell}(\rho)\big](r)\big|^2\Big)^{\frac12}
\Big\|^2_{L^2_t(\R;L^\gamma_{r^{n-1}\mathrm{d}r}(\R^+))}
\\&\lesssim
\sum_{R\in2^{\Z}}\sum_{k=0}^{\infty}\sum_{\ell=1}^{d(k)}\min\Big\{R^{\frac{(n+1)+(\gamma-2)\nu(k)}\gamma-\frac{n-1}
2}, R^{\frac{n-1}\gamma-\frac{n-2}2}\Big\}^2\|
b^0_{k,\ell}(\rho)\|^2_{L^2_\rho}\\&\lesssim
\sum_{k=0}^{\infty}\sum_{\ell=1}^{d(k)}\|
b^0_{k,\ell}(\rho)\|^2_{L^2_\rho}.
\end{split}
\end{equation}
Since $\text{supp}~b^0_{k,\ell}(\rho)\subset[1,2]$, we have
\begin{equation*}
\begin{split}
\sum_{k=0}^{\infty}\sum_{\ell=1}^{d(k)}\|
b^0_{k,\ell}(\rho)\|^2_{L^2_\rho} \lesssim
\sum_{k=0}^{\infty}\sum_{\ell=1}^{d(k)}\|
\big(\mathcal{H}_{\nu(k)}a^0_{k,\ell}\big)(\rho)\|^2_{L^2_{\rho^{n-1}\mathrm{d}\rho}}.
\end{split}
\end{equation*}
It follows from Lemma \ref{Hankel3} that
\begin{equation*}
\begin{split}
\sum_{k=0}^{\infty}\sum_{\ell=1}^{d(k)}\|
\big(\mathcal{H}_{\nu(k)}a^0_{k,\ell}\big)(\rho)\|^2_{L^2_{\rho^{n-1}\mathrm{d}\rho}}=
\sum_{k=0}^{\infty}\sum_{\ell=1}^{d(k)}\|
a^0_{k,\ell}(r)\|^2_{L^2_{r^{n-1}\mathrm{d}r}}=\|u_0(x)\|^2_{L^2_x(\R^n)}.
\end{split}
\end{equation*}
Therefore, we complete the proof of \eqref{4.3}.

\end{proof}
Now we turn to prove Theorem \ref{thm}. We choose $\beta\in
C_0^\infty(\R^+)$ supported in $[\frac12,2]$ such that
$\sum\limits_{N\in2^\Z}\beta(\frac{\rho}N)=1$ for all $\rho\in R^+$.
Let $\beta_N(\rho)=\beta(\frac{\rho}N)$ and $\tilde{\beta}_N$ be
similar to $\beta_N$. For simplicity, we assume $u_1=0$. Then we can
write
\begin{equation}\label{4.13}
\begin{split} u(x,t)=&\sum_{M\in2^{\Z}}\Big\{Y_{0,1}(\theta)\mathcal{H}_{\nu(0)}\big[\cos(
t\rho)b^0_{0,1}(\rho)\beta_M(\rho)\big](r)\\&\qquad\quad+\sum_{N\in2^{\N}}\sum_{k}\tilde{\beta}_N(k)\sum_{\ell=1}^{d(k)}Y_{k,\ell}(\theta)\mathcal{H}_{\nu(k)}\big[\cos(
t\rho)b^0_{k,\ell}(\rho)\beta_M(\rho)\big](r)\Big\}\\:=&u_{<}(x,t)+u_{\geq}(x,t).
\end{split}
\end{equation}

Without loss of the generality, it suffices to estiamte
$u_{\geq}(x,t)$. By Lemma \ref{square-expression}, Lemma
\ref{square-expression2} and the scaling argument, we show that for
$2\leq q, r$ and $r<\infty$
\begin{equation}\label{4.14}
\begin{split}
&\|u_{\geq}(t,x)\|^2_{L^q_t(\R;L^r_x(\R^n))}\\&\lesssim
\sum_{M\in2^{\Z}}\sum_{N\in2^{\N}}\Big\|\sum_{k}\tilde{\beta}_N(k)\sum_{\ell=1}^{d(k)}Y_{k,\ell}(\theta)\mathcal{H}_{\nu(k)}\big[\cos(
t\rho)b^0_{k,\ell}(\rho)\beta_M(\rho)\big](r)\Big\|^2_{L^q_t(\R;L^r_x(\R^n))}\\&\lesssim
\sum_{M\in2^{\Z}}M^{2(n-\frac 1q-\frac
nr)}\sum_{N\in2^{\N}}\Big\|\sum_{k}\tilde{\beta}_N(k)\sum_{\ell=1}^{d(k)}Y_{k,\ell}(\theta)\mathcal{H}_{\nu(k)}\big[\cos(
t\rho)b^0_{k,\ell}(M{\rho})\beta(\rho)\big](r)\Big\|^2_{L^q_t(\R;L^r_x(\R^n))}.
\end{split}
\end{equation}
$\bullet$ {\bf Case 1:} $n\geq4$. we have by interpolation
\begin{equation}\label{4.15}
\begin{split}
&\Big\|\sum_{k}\tilde{\beta}_N(k)\sum_{\ell=1}^{d(k)}Y_{k,\ell}(\theta)\mathcal{H}_{\nu(k)}\big[\cos(
t\rho)b^0_{k,\ell}(M{\rho})\beta(\rho)\big](r)\Big\|_{L^q_t(\R;L^r_x(\R^n))}\\
&\lesssim\Big\|\sum_{k}\tilde{\beta}_N(k)\sum_{\ell=1}^{d(k)}Y_{k,\ell}(\theta)\mathcal{H}_{\nu(k)}\big[\cos(
t\rho)b^0_{k,\ell}(M{\rho})\beta(\rho)\big](r)\Big\|^\lambda_{L^2_t(\R;L^{\gamma_0}_x(\R^n))}\\&\qquad\times
\Big\|\sum_{k}\tilde{\beta}_N(k)\sum_{\ell=1}^{d(k)}Y_{k,\ell}(\theta)\mathcal{H}_{\nu(k)}\big[\cos(
t\rho)b^0_{k,\ell}(M{\rho})\beta(\rho)\big](r)\Big\|^{1-\lambda}_{L^{\infty}_t(\R;L^2_x(\R^n))},
\end{split}
\end{equation}
where
\begin{equation}\label{4.16}
\begin{split}
\frac 1q=\frac\lambda 2+\frac{1-\lambda}\infty,\qquad
\frac1r=\frac\lambda {\gamma_0}+\frac{1-\lambda}2,\qquad
\frac1{\gamma_0}=\frac q2(\frac1r+\frac1q-\frac12).
\end{split}
\end{equation}
Since  $(q,r)\in\Lambda$, one has
$\frac{2(n-1)}{n-2}<\gamma_0\leq\frac{2(n-1)}{n-3}$. By
\eqref{Bernstein} and the argument in proving Proposition \ref{pro},
one has
\begin{equation}\label{4.17}
\begin{split}
&\Big\|\sum_{k}\tilde{\beta}_N(k)\sum_{\ell=1}^{d(k)}Y_{k,\ell}(\theta)\mathcal{H}_{\nu(k)}\big[\cos(
t\rho)b^0_{k,\ell}(M{\rho})\beta(\rho)\big](r)\Big\|_{L^2_t(\R;L^{\frac{2(n-1)}{n-2}+}_{x}(\R^n))}\\&\lesssim
N^{\frac12+}\Big\|\Big(\sum_{k}\sum_{\ell=1}^{d(k)}\big|\tilde{\beta}_N(k)\mathcal{H}_{\nu(k)}\big[\cos(
t\rho)b^0_{k,\ell}(M{\rho})\beta(\rho)\big](r)\big|^2\Big)^{\frac12}\Big\|_{L^2_t(\R;L^{\frac{2(n-1)}{n-2}+}_{r^{n-1}\mathrm{d}r}(\R^+))}\\&\lesssim
N^{\frac12+}\Big\|\Big(\sum_{k}\sum_{\ell=1}^{d(k)}\big|\tilde{\beta}_N(k)b^0_{k,\ell}(M{\rho})\beta(\rho)\big|^2\Big)^{\frac12}\Big\|_{L^2_\rho}.
\end{split}
\end{equation}
On the other hand, using the endpoint Strichartz estimate in Lemma
\ref{stri}, we have
\begin{equation}\label{4.18}
\begin{split}
&\Big\|\sum_{k}\tilde{\beta}_N(k)\sum_{\ell=1}^{d(k)}Y_{k,\ell}(\theta)\mathcal{H}_{\nu(k)}\big[\cos(
t\rho)b^0_{k,\ell}(M{\rho})\beta(\rho)\big](r)\Big\|_{L^2_t(\R;L^{\frac{2(n-1)}{n-3}}_{x}(\R^n))}\\&\lesssim
\Big\|\Big(\sum_{k}\sum_{\ell=1}^{d(k)}\big|\tilde{\beta}_N(k)b^0_{k,\ell}(M{\rho})\beta(\rho)\big|^2\Big)^{\frac12}\Big\|_{L^2_\rho}.
\end{split}
\end{equation}
Therefore, we obtain by interpolation
\begin{equation}\label{4.19}
\begin{split}
&\Big\|\sum_{k}\tilde{\beta}_N(k)\sum_{\ell=1}^{d(k)}Y_{k,\ell}(\theta)\mathcal{H}_{\nu(k)}\big[\cos(
t\rho)b^0_{k,\ell}(M{\rho})\beta(\rho)\big](r)\Big\|_{L^2_t(\R;L^{\gamma_0}_x(\R^n))}\\&\lesssim
N^{(n-1)(\frac1{\gamma_0}-\frac{n-3}{2(n-1)})+}\Big\|\Big(\sum_{k}\sum_{\ell=1}^{d(k)}\big|\tilde{\beta}_N(k)b^0_{k,\ell}(M{\rho})\beta(\rho)\big|^2\Big)^{\frac12}\Big\|_{L^2_\rho}.
\end{split}
\end{equation}
By Lemma \ref{Hankel3}, we have
\begin{equation*}
\begin{split}
\big\|[\mathcal{H}_{\nu(k)}a_{k,\ell}](r)\big\|_{L^2_{r^{n-1}\mathrm{d}r}}=\|a_{k,\ell}(\rho)\|_{L^2_{\rho^{n-1}\mathrm{d}\rho}}.
\end{split}
\end{equation*}
We are in sprit of energy estimate to obtain
\begin{equation}\label{4.20}
\begin{split}
&\Big\|\sum_{k}\tilde{\beta}_N(k)\sum_{\ell=1}^{d(k)}Y_{k,\ell}(\theta)\mathcal{H}_{\nu(k)}\big[\cos(
t\rho)b^0_{k,\ell}(M{\rho})\beta(\rho)\big](r)\Big\|_{L^\infty_t(\R;L^2_{r^{n-1}\mathrm{d}r}(\R^+;L^2(\mathbb{S}^{n-1})))}\\&\lesssim
\Big\|\Big(\sum_{k}\sum_{\ell=1}^{d(k)}\big|\tilde{\beta}_N(k)\mathcal{H}_{\nu(k)}\big[\cos(
t\rho)b^0_{k,\ell}(M{\rho})\beta(\rho)\big](r)\big|^2\Big)^{\frac12}\Big\|_{L^\infty_t(\R;L^{2}_{r^{n-1}\mathrm{d}r}(\R^+))}\\&\lesssim
\Big(\sum_{k}\sum_{\ell=1}^{d(k)}\tilde{\beta}_N(k)\big\|b^0_{k,\ell}(M{\rho})\beta(\rho)\big\|_{L^2_{\rho}}^2\Big)^{\frac12}.
\end{split}
\end{equation}
Combining \eqref{4.14},\eqref{4.15}, \eqref{4.19}, and \eqref{4.20},
we have
\begin{equation}\label{4.21}
\begin{split}
&\|u_{\geq}(t,x)\|^2_{L^q_t(\R;L^r_x(\R^n))}\\&\lesssim
\sum_{M\in2^{\Z}}M^{2(n-\frac 1q-\frac
nr)}\sum_{N\in2^{\N}}N^{2\lambda(n-1)(\frac1{\gamma_0}-\frac{n-3}{2(n-1)})+}\sum_{k}\sum_{\ell=1}^{d(k)}\tilde{\beta}_N(k)\big\|b^0_{k,\ell}(M{\rho})\beta(\rho)\big\|_{L^2_{\rho}}^2
\\&\lesssim
\sum_{M\in2^{\Z}}M^{2(n-\frac 1q-\frac
nr)}\sum_{N\in2^{\N}}N^{2[\frac2q+(n-1)(\frac1r-\frac12)]+}\sum_{k}\sum_{\ell=1}^{d(k)}\tilde{\beta}_N(k)\big\|b^0_{k,\ell}(M{\rho})\beta(\rho)\big\|_{L^2_{\rho}}^2.
\end{split}
\end{equation}
By making use of Lemma \ref{orthogonality},
$s=n(\frac12-\frac1r)-\frac1q$, and
$\bar{s}=(1+\epsilon)(\frac2q-(n-1)(\frac12-\frac1r))$, we get
\begin{equation}\label{4.22}
\begin{split}
\|u_{\geq}(t,x)\|_{L^q_t(\R;L^r_x(\R^n))}\lesssim
\|\langle\Omega\rangle^{\bar{s}}u_0\|_{\dot H^s}.
\end{split}
\end{equation}
$\bullet$ {\bf Case 2:} $n=3$. Since the endpoint Strichartz
estimate fails, the above argument breaks down. By the
interpolation, we have
\begin{equation}\label{4.23}
\begin{split}
&\Big\|\sum_{k}\tilde{\beta}_N(k)\sum_{\ell=1}^{d(k)}Y_{k,\ell}(\theta)\mathcal{H}_{\nu(k)}\big[\cos(
t\rho)b^0_{k,\ell}(M{\rho})\beta(\rho)\big](r)\Big\|_{L^q_t(\R;L^r_x(\R^3))}\\
&\lesssim\Big\|\sum_{k}\tilde{\beta}_N(k)\sum_{\ell=1}^{d(k)}Y_{k,\ell}(\theta)\mathcal{H}_{\nu(k)}\big[\cos(
t\rho)b^0_{k,\ell}(M{\rho})\beta(\rho)\big](r)\Big\|^\lambda_{L^2_t(\R;L^{4+}_x(\R^3))}\\&\qquad
\times\Big\|\sum_{k}\tilde{\beta}_N(k)\sum_{\ell=1}^{d(k)}Y_{k,\ell}(\theta)\mathcal{H}_{\nu(k)}\big[\cos(
t\rho)b^0_{k,\ell}(M{\rho})\beta(\rho)\big](r)\Big\|^{1-\lambda}_{L^{q_0}_t(\R;L^{r_0}_x(\R^3))},
\end{split}
\end{equation}
where
\begin{equation}\label{4.24}
\begin{split}
\frac 1q=\frac\lambda 2+\frac{1-\lambda}{q_0},\qquad
\frac1r=\frac\lambda {4+}+\frac{1-\lambda}{r_0},\qquad
\frac{1}2=\frac 1{q_0}+\frac{1}{r_0},\quad r_0\neq \infty.
\end{split}
\end{equation}

By \eqref{Bernstein} and the argument in proving Proposition
\ref{pro}, one has
\begin{equation}\label{4.25}
\begin{split}
&\Big\|\sum_{k}\tilde{\beta}_N(k)\sum_{\ell=1}^{d(k)}Y_{k,\ell}(\theta)\mathcal{H}_{\nu(k)}\big[\cos(
t\rho)b^0_{k,\ell}(M{\rho})\beta(\rho)\big](r)\Big\|_{L^2_t(\R;L^{4+}_{x}(\R^3))}\\&\lesssim
N^{\frac12+}\Big\|\Big(\sum_{k}\sum_{\ell=1}^{d(k)}\big|\tilde{\beta}_N(k)\mathcal{H}_{\nu(k)}\big[\cos(
t\rho)b^0_{k,\ell}(M{\rho})\beta(\rho)\big](r)\big|^2\Big)^{\frac12}\Big\|_{L^2_t(\R;L^{4+}_{r^{n-1}\mathrm{d}r}(\R^+))}\\&\lesssim
N^{\frac12+}\Big\|\Big(\sum_{k}\sum_{\ell=1}^{d(k)}\big|\tilde{\beta}_N(k)b^0_{k,\ell}(M{\rho})\beta(\rho)\big|^2\Big)^{\frac12}\Big\|_{L^2_\rho}.
\end{split}
\end{equation}
On the other hand, by the  Strichartz estimate with $(q_0, r_0)$ in
Lemma \ref{stri}, we have
\begin{equation}\label{4.26}
\begin{split}
&\Big\|\sum_{k}\tilde{\beta}_N(k)\sum_{\ell=1}^{d(k)}Y_{k,\ell}(\theta)\mathcal{H}_{\nu(k)}\big[\cos(
t\rho)b^0_{k,\ell}(M{\rho})\beta(\rho)\big](r)\Big\|_{L^{q_0}_t(\R;L^{r_0}_{x}(\R^3))}\\&\lesssim
\Big\|\Big(\sum_{k}\sum_{\ell=1}^{d(k)}\big|\tilde{\beta}_N(k)b^0_{k,\ell}(M{\rho})\beta(\rho)\big|^2\Big)^{\frac12}\Big\|_{L^2_\rho}.
\end{split}
\end{equation}
This together with \eqref{4.14}, \eqref{4.23} and \eqref{4.25}
yields that
\begin{equation}\label{4.27}
\begin{split}
&\|u_{\geq}(t,x)\|^2_{L^q_t(\R;L^r_x(\R^n))}\\&\lesssim
\sum_{M\in2^{\Z}}M^{2(n-\frac 1q-\frac
nr)}\sum_{N\in2^{\N}}N^{\lambda+}\sum_{k}\sum_{\ell=1}^{d(k)}\tilde{\beta}_N(k)\big\|b^0_{k,\ell}(M{\rho})\beta(\rho)\big\|_{L^2_{\rho}}^2
\\&\lesssim
\sum_{M\in2^{\Z}}M^{2(n-\frac 1q-\frac
nr)}\sum_{N\in2^{\N}}N^{(4+\epsilon)[\frac1q+\frac1r-\frac12]}\sum_{k}\sum_{\ell=1}^{d(k)}\tilde{\beta}_N(k)\big\|b^0_{k,\ell}(M{\rho})\beta(\rho)\big\|_{L^2_{\rho}}^2.
\end{split}
\end{equation}
 Since $s=n(\frac12-\frac1r)-\frac1q$, by the scaling argument, Lemma
\ref{orthogonality}, we  get
\begin{equation}\label{4.28}
\begin{split}
\|u_{\geq}(t,x)\|_{L^q_t(\R;L^r_x(\R^n))}\lesssim
\|\langle\Omega\rangle^{\bar{s}}u_0\|_{\dot H^s}.
\end{split}
\end{equation}
Moreover, for $q=2, 4<r<\infty$, \eqref{4.14} and the ``Bernstein"
inequality \eqref{Bernstein} imply that
\begin{equation*}
\begin{split}
&\|u_{\geq}(t,x)\|^2_{L^2_t(\R;L^r_x(\R^n))}
\\ \lesssim&
\sum_{M\in2^{\Z}}M^{2(n-\frac 12-\frac
nr)}\sum_{N\in2^{\N}}\Big\|\sum_{k}\tilde{\beta}_N(k)\sum_{\ell=1}^{d(k)}Y_{k,\ell}(\theta)\mathcal{H}_{\nu(k)}\big[\cos(
t\rho)b^0_{k,\ell}(M{\rho})\beta(\rho)\big](r)\Big\|^2_{L^2_t(\R;L^r_x(\R^n))}\\
\lesssim&\sum_{M\in2^{\Z}}M^{2(n-\frac 12-\frac
nr)}\sum_{N\in2^{\N}}
N^{2\bar{s}(r)}\Big\|\Big(\sum_{k}\sum_{\ell=1}^{d(k)}\big|\tilde{\beta}_N(k)\mathcal{H}_{\nu(k)}\big[\cos(
t\rho)b^0_{k,\ell}(M{\rho})\beta(\rho)\big](r)\big|^2\Big)^{\frac12}\Big\|^2_{L^2_t(\R;L^{r}_{r^{n-1}\mathrm{d}r}(\R^+))}\\\lesssim
& \|\langle\Omega\rangle^{\bar{s}(r)}u_0\|^2_{\dot H^s}.
\end{split}
\end{equation*}
Combining this with \eqref{4.22} and \eqref{4.28}, we complete the
proof of Theorem \ref{thm}.

\section{Proof of Theorem \ref{thm1}}
To prove Theorem \ref{thm1}, we first use the inhomogeneous
Strichartz estimates for the wave equation without potential in
\cite{H,O} and the arguments in \cite{PSS} to prove an inhomogeneous
Strichartz estimates for the wave equation with inverse-square
potential.

\begin{proposition}[Inhomogeneous Strichartz estimates]\label{inh}
Let $\widetilde{\Box}=\partial_t^2+A_{\nu}$ and let $v$ solve the
inhomogeneous wave equation $\widetilde{\Box} v=h$ in $\R\times\R^n$
with zero initial data. If $\nu>\max\{\frac{n-2}2-\frac n{q_0},
\frac n{r_0}-\frac{n-2}2-2\}$, then
\begin{equation}\label{5.1}
\begin{split}
\|v\|_{L^{q_0}_{t,x}(\R\times\R^n)}\lesssim
\|h\|_{L^{r_0}_{t,x}(\R\times\R^n)},
\end{split}
\end{equation}
where $q_0=(p-1)(n+1)/2$ and $r_0=(n+1)(p-1)/(2p)$ with
$p_{\text{h}}<p<p_{\text{conf}}$.
\end{proposition}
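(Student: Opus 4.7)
The plan is to conjugate away the inverse-square potential using the Hankel transform, reducing the problem to an inhomogeneous Strichartz estimate for the free wave equation on radial functions, and then to invoke the non-sharp inhomogeneous estimate of Harmse \cite{H} and Oberlin \cite{O}. This follows the strategy of Planchon--Stalker--Tahvildar-Zadeh \cite{PSS}.

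First I set $\lambda := \mu(0) = \frac{n-2}{2}$, so that $A_\lambda = -\partial_r^2 - \frac{n-1}{r}\partial_r$ is the radial part of $-\Delta$ on $\R^n$. Define $\widetilde{v} := \mathcal{K}^0_{\lambda,\nu} v$ and $\widetilde{h} := \mathcal{K}^0_{\lambda,\nu} h$. Since $\mathcal{K}^0_{\lambda,\nu}$ acts only in the radial variable (and therefore commutes with $\partial_t^2$) and satisfies the intertwining identity $A_\lambda \mathcal{K}^0_{\lambda,\nu} = \mathcal{K}^0_{\lambda,\nu} A_\nu$ from \eqref{2.21}, the function $\widetilde{v}$ is a radial solution of the free wave equation
\begin{equation*}
(\partial_t^2 + A_\lambda) \widetilde{v} = \widetilde{h}, \qquad \widetilde{v}|_{t=0} = \partial_t \widetilde{v}|_{t=0} = 0,
\end{equation*}
posed on $\R \times \R^n$.

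Next I would invoke the Harmse--Oberlin inhomogeneous Strichartz estimate for $\widetilde v$. A direct calculation from the definitions of $q_0$ and $r_0$ gives the scaling identity $\frac{1}{r_0} - \frac{1}{q_0} = \frac{2}{n+1}$, which is the correct scaling for the inhomogeneous wave estimate without derivative loss; the hypothesis $p_h < p < p_{\text{conf}}$ then places $(q_0, r_0)$ in the admissible range of \cite{H,O}, yielding
\begin{equation*}
\|\widetilde{v}\|_{L^{q_0}_{t,x}(\R\times\R^n)} \lesssim \|\widetilde{h}\|_{L^{r_0}_{t,x}(\R\times\R^n)}.
\end{equation*}

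Finally, I undo the conjugation. Since $\mathcal{K}^0_{\nu,\lambda} \mathcal{K}^0_{\lambda,\nu} = \mathrm{Id}$ by Lemma \ref{Hankel3}(i), we have $v = \mathcal{K}^0_{\nu,\lambda} \widetilde{v}$; applying Lemma \ref{continuous} slice-by-slice in $t$ with $\alpha = \beta = 0$ gives $\|v(t)\|_{L^{q_0}_x} \lesssim \|\widetilde{v}(t)\|_{L^{q_0}_x}$ and $\|\widetilde{h}(t)\|_{L^{r_0}_x} \lesssim \|h(t)\|_{L^{r_0}_x}$, after which I integrate in time. The required boundedness of $\mathcal{K}^0_{\nu,\lambda}$ on $L^{q_0}_{\text{rad}}$ demands $\nu > \frac{n-2}{2} - \frac{n}{q_0}$, while the boundedness of $\mathcal{K}^0_{\lambda,\nu}$ on $L^{r_0}_{\text{rad}}$ demands $\nu > \frac{n}{r_0} - \frac{n-2}{2} - 2$; these are precisely the two hypotheses imposed on $\nu$. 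The main obstacle, and the origin of the lower bound $p > p_h$, is verifying that the admissibility windows in Lemma \ref{continuous} are compatible with the Harmse--Oberlin range: as $p \downarrow p_h$ the index $1/r_0$ approaches the boundary where $\mathcal{K}^0_{\lambda,\nu}$ would have to be bounded on $L^1$, which is not known, as noted in Remark ii following Theorem \ref{thm1}.
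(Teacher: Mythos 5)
Your proposal is correct and follows essentially the same route as the paper: conjugate by $\mathcal{K}^0_{\lambda,\nu}$ using the intertwining relation \eqref{2.21} to reduce to the free wave equation, apply the Harmse--Oberlin estimate (the identity $\frac{1}{r_0}-\frac{1}{q_0}=\frac{2}{n+1}$ and the range $\frac{2n}{n-1}<q_0<\frac{2(n+1)}{n-1}$ being exactly equivalent to $p_h<p<p_{\text{conf}}$), and undo the conjugation via Lemma \ref{continuous}, whose admissibility windows with $\alpha=\beta=0$ yield precisely the two stated lower bounds on $\nu$. The only quibble is the closing aside: the constraint $p>p_h$ comes from the lower endpoint of the Harmse--Oberlin range rather than from $1/r_0$ reaching $1$ (at $p=p_h$ one still has $r_0>1$), but this does not affect the proof.
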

\begin{proof}
By the continuity property of $\mathcal{K}^0_{\nu,\lambda}$ in Lemma
\ref{continuous}, it follows that
\begin{equation}\label{5.2}
\begin{split}
&\|v\|_{L^{q_0}_{t,x}(\R\times\R^n)}\leq
\|\mathcal{K}^0_{\nu,\lambda}\|_{q_0\rightarrow q_0}
\|\mathcal{K}^0_{\lambda,\nu}v\|_{L^{q_0}_{t,x}(\R\times\R^n)}.
\end{split}
\end{equation}
Noting that \eqref{2.21} with $k=0$, one has
$\mathcal{K}^0_{\lambda,\nu}h=\mathcal{K}^0_{\lambda,\nu}\widetilde{\Box}v=\Box\mathcal{K}^0_{\lambda,\nu}v$.
We recall the inhomogeneous Strichartz estimates, for
$\frac1{r}-\frac1{q}=\frac2{n+1}$ and
$\frac{2n}{n-1}<q<\frac{2(n+1)}{n-1}$,
\begin{equation*}
\begin{split}
\|u\|_{L^{q}_{t,x}(\R\times\R^n)}\leq C\|
(\partial_{tt}-\Delta)u\|_{L^{r}_{t,x}(\R\times\R^n)},
\end{split}
\end{equation*}
which was shown by Harmse\cite{H} and Oberlin \cite{O}. Thus we
obtain
\begin{equation}\label{5.3}
\begin{split}
&\|v\|_{L^{q_0}_{t,x}(\R\times\R^n)}\leq C
\|\mathcal{K}^0_{\nu,\lambda}\|_{q_0\rightarrow q_0}
\|\mathcal{K}^0_{\lambda,\nu}
h\|_{L^{r_0}_{t,x}(\R\times\R^n)}\\&\lesssim
\|\mathcal{K}^0_{\nu,\lambda}\|_{q_0\rightarrow
q_0}\|\mathcal{K}^0_{\lambda,\nu}\|_{r_0\rightarrow r_0}\|
h\|_{L^{r_0}_{t,x}(\R\times\R^n)},
\end{split}
\end{equation}
where we use the facts that $\frac1{r_0}-\frac1{q_0}=\frac2{n+1}$
and $\frac{2n}{n-1}<q_0<\frac{2(n+1)}{n-1}$.
\end{proof}
Now we are in position to prove Theorem \ref{thm1}. Define the
solution map
\begin{equation*}
\begin{split}
\Phi(u)&=\cos\big(t\sqrt{P_a}\big)u_0(x)+\frac{\sin\big(t\sqrt{P_a}\big
)}{\sqrt{P_a}}u_1(x)+\int_0^t\frac{\sin\big((t-s)\sqrt{P_a}\big)}{\sqrt{P_a}}F(u(s,x))\mathrm{d}s
\\&:=u_{\text{hom}}+u_{\text{inh}},
\end{split}
\end{equation*}
on the complete metric space $X$
$$X=\big\{u: u\in C_t(\dot H^{s_c})\cap L^{q_0}_{t,x},
\|u\|_{L^{q_0}_{t,x}}\leq 2C\epsilon\big\}$$ with the metric
$d(u_1,u_2)=\|u_1-u_2\|_{L^{q_0}_{t,x}}$,
 where $P_a=-\Delta+\frac{a}{|x|^2}$
with $a$ satisfying \eqref{1.7}, and $\epsilon\leq\epsilon(p)$ is as
in \eqref{1.8}.

From Lemma \ref{stri1} and \eqref{1.5}, we obtain
\begin{equation}\label{5.4}
\begin{split}
\|u_{\text{hom}}\|_{C_t(\dot H^{s_c})\cap L^{q_0}_{t,x}}\leq
C\big(\|u_0\|_{\dot H^{s_c}}+\|u_1\|_{\dot H^{s_c-1}}\big)\leq
C\epsilon.
\end{split}
\end{equation}
By Proposition \ref{inh} and the inhomogeneous version of the
 Strichartz estimates \eqref{1.2}, one has
\begin{equation}\label{5.5}
\begin{split}
\|u_{\text{inh}}\|_{C_t(\dot H^{s_c})\cap L^{q_0}_{t,x}}\leq
C\|F(u)\|_{L^{r_0}_{t,x}}\leq C\|u\|^{p}_{L^{q_0}_{t,x}}\leq
C^2(C\epsilon)^{p-1}\epsilon\leq C\epsilon.
\end{split}
\end{equation}
A similar argument as above leads to
\begin{equation}\label{5.6}
\begin{split}
\|\Phi(u_1)-\Phi(u_2)\|_{L^{q_0}_{t,x}}\leq&
C\|F(u_1)-F(u_2)\|_{L^{r_0}_{t,x}}\\\leq&
C^2(C\epsilon)^{p-1}\|u_1-u_2\|_{L^{q_0}_{t,x}}\leq
\frac12\|u_1-u_2\|_{L^{q_0}_{t,x}}.
\end{split}
\end{equation}
Therefore, the solution map $\Phi$ is a contraction map on $X$ under
the matric $d(u_1,u_2)=\|u_1-u_2\|_{L^{q_0}_{t,x}}$. The standard
contraction argument gives the proof.

\section{Appendix: The Proof of Lemma \ref{square-expression}}
We will apply the H\"ormander's technique to showing a weak-type
$(1,1)$ estimate for the multiplier operators with respect to the
Hankel transform.

The multiplier operators associated with the Hankel transform are
defined by
\begin{equation}\label{6.1}
[L_jf](r)=\int_0^\infty(r\rho)^{-\frac{n-2}2}J_{\nu}(r\rho)[\mathcal{H}_{\nu}f](\rho)\beta_j(\rho)\mathrm{d}\omega(\rho),\quad
j\in\Z
\end{equation}
where
\begin{equation}\label{6.2}
(\mathcal{H}_{\nu}f)(\rho)=\int_0^\infty(r\rho)^{-\frac{n-2}2}J_{\nu}(r\rho)f(r)\mathrm{d}\omega(r),\quad
\text{and}\quad \mathrm{d}\omega(\rho)=\rho^{n-1}\mathrm{d}\rho.
\end{equation}
Since $\mathcal{H}_{\nu}=\mathcal{H}^{-1}_{\nu}$, we have
$\mathcal{H}_{\nu}[L_jf]=\beta_j(\rho)[\mathcal{H}_{\nu}f]$. We
first claim that
\begin{equation}\label{6.3}
\big\|\big(\sum_{j\in\Z}|L_jf|^2\big)^{\frac12}\big\|_{L^p(\omega)}\sim\big\|\sum_{j\in\Z}L_jf\big\|_{L^p(\omega)}\sim\|f\|_{L^p(\omega)}.
\end{equation}
This implies Lemma \ref{square-expression}, by choosing
$f=\mathcal{H}_{\nu}[\cos(t\rho)b_{k,\ell}^0(\rho)]$.  To show
\eqref{6.3}, we need the following
\begin{lemma}\label{square}
Let $f\in L^p(\omega)$, $1<p<\infty$. Then there exists a constant
$C_p$ such that
\begin{equation}\label{6.4}
\big\|\big(\sum_{j\in\Z}|{L}_jf|^2\big)^{\frac12}\big\|_{L^p(\omega)}\leq
C_p\|f\|_{L^p(\omega)}.
\end{equation}
\end{lemma}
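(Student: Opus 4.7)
The plan is to establish \eqref{6.4} through a randomization-and-Calder\'on-Zygmund argument on the space of homogeneous type $(\R^+, |\cdot|, d\omega)$, adapted from H\"ormander's classical multiplier technique.

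First I would linearize the square function via Rademacher functions $\{r_j(t)\}_{j\in\Z}$ on $[0,1]$. Setting
$$T_t f (r) = \sum_{j \in \Z} r_j(t)\, L_j f(r),$$
Khintchine's inequality yields
$$\Big\|\Big(\sum_{j\in\Z}|L_j f|^2\Big)^{1/2}\Big\|_{L^p(\omega)} \sim \Big(\int_0^1 \|T_t f\|_{L^p(\omega)}^p \, dt\Big)^{1/p},$$
so \eqref{6.4} reduces to proving $\|T_t\|_{L^p(\omega) \to L^p(\omega)} \leq C_p$ uniformly in $t \in [0,1]$. Since $\mathcal{H}_\nu = \mathcal{H}_\nu^{-1}$ and $\mathcal{H}_\nu$ is an $L^2$-isometry by Lemma \ref{Hankel3}, one can write
$$T_t f = \mathcal{H}_\nu \big[\psi_t \, \mathcal{H}_\nu f\big], \qquad \psi_t(\rho) := \sum_{j\in\Z} r_j(t)\beta_j(\rho),$$
and the bounded overlap of the $\beta_j$ gives $\|\psi_t\|_{L^\infty(\R^+)} \leq C$ uniformly in $t$, so $\|T_t f\|_{L^2(\omega)} \leq C \|f\|_{L^2(\omega)}$.

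Second, the core of the argument is a weak-type $(1,1)$ bound for $T_t$ on $(\R^+, d\omega)$, which is a space of homogeneous type with doubling measure. Writing $T_t f(r) = \int_0^\infty K_t(r,s) f(s)\, d\omega(s)$, one has
$$K_t(r,s) = (rs)^{-(n-2)/2} \sum_{j \in \Z} r_j(t) \int_0^\infty J_\nu(r\rho) J_\nu(s\rho) \beta_j(\rho)\, \rho\, d\rho.$$
I would verify the H\"ormander integral condition
$$\sup_{s \in \R^+,\, R > 0} \int_{|r-s| \geq 2R} \sup_{|s-s'| \leq R} |K_t(r,s) - K_t(r,s')| \, d\omega(r) \leq C$$
uniformly in $t$, by splitting each dyadic piece of the kernel according to the three asymptotic regimes of $J_\nu$ from Lemma \ref{Bessel}. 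Away from the transition zone $\rho \sim \nu/\min(r,s)$, the oscillatory integral definition of the $j$-th piece admits integration by parts in $\rho$ (using Lemma \ref{Bessel2} for the derivative bound) producing smallness of order $(2^j|r-s|)^{-N}$; the transition zone is absorbed by the uniform $\nu^{-1/3}$ decay of Lemma \ref{Bessel}; and the zone where $r\rho$ or $s\rho$ is bounded is controlled by the elementary estimates \eqref{2.4}-\eqref{2.5}. Summing the dyadic contributions yields the H\"ormander condition with constant independent of $t$.

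Third, the Calder\'on-Zygmund theorem on spaces of homogeneous type then produces $T_t : L^1(\omega) \to L^{1,\infty}(\omega)$ with bound independent of $t$; Marcinkiewicz interpolation against the $L^2$ bound gives $L^p$ boundedness for $1 < p \leq 2$, and the self-adjointness of $T_t$ (up to sign) covers $2 \leq p < \infty$ by duality. Inserting this back into Khintchine's inequality completes the proof of Lemma \ref{square}.

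The main obstacle is the H\"ormander kernel estimate. The delicacy is that $J_\nu$ has three genuinely different asymptotic regimes, so the integration by parts gain is available only in the oscillatory region $r\rho, s\rho \gg \nu$; one must argue separately in the transition and exponentially decaying zones, and verify that after summing over the dyadic scales $j$ the bounds still collapse to a constant independent of both $t$ and the basepoint $s$. All remaining steps are standard consequences of the Calder\'on-Zygmund machinery on spaces of homogeneous type.
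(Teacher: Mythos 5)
Your proposal follows essentially the same route as the paper: both reduce the matter to Calder\'on--Zygmund theory on the homogeneous-type space $(\R^+,\mathrm{d}\omega)$, with the $L^2$ bound coming from Plancherel for $\mathcal{H}_{\nu}$ and the crux being the summed H\"ormander condition $\sum_{j}\int|K_j(r,s)-K_j(r,s')|\,\mathrm{d}\omega(r)\le C$ on the dyadic Hankel-convolution kernels, which is exactly the condition $\sum_{j}\int|\tau_{y}k_j(x)-\tau_{y_0}k_j(x)|\,\mathrm{d}\omega(x)\le C$ appearing in the appendix. The only differences are organizational: you linearize the square function by Khintchine/Rademacher randomization and run scalar Calder\'on--Zygmund theory uniformly in $t$, whereas the paper treats $S\colon f\mapsto\{L_jf\}$ directly as an $\ell^2$-valued singular integral via Coifman--Weiss, and the paper outsources the kernel estimate to the references of Betancor--Rodr\'iguez-Mesa and Gosselin--Stempak rather than re-deriving it from the Bessel asymptotics as you sketch (but do not carry out).
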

We postpone the proof for a moment. By duality, one has
\begin{equation*}
\|f\|_{L^p(\omega)}=\sup_{\|g\|_{L^{p'}(\omega)}\leq1}\int_0^\infty
f(r)\overline{g}(r)\mathrm{d}\omega(r).
\end{equation*}
By Lemma \ref{Hankel3}, we observe that
\begin{equation*}
\begin{split}
\int_0^\infty
f(r)\overline{g}(r)\mathrm{d}\omega(r)&=\sum_{j,j'\in\Z}\int_0^\infty
\mathcal{H}_{\nu}[L_jf](\rho){\mathcal{H}_{\nu}[L_{j'}\overline{g}]}(\rho)\mathrm{d}\omega(\rho)
\\&=\sum_{j,j'\in\Z}\int_0^\infty \beta_j(\rho)\beta_{j'}(\rho)[\mathcal{H}_{\nu}f](\rho)[\mathcal{H}_{\nu}\overline{g}](\rho)\mathrm{d}\omega(\rho)
\\&\leq
C\sum_{j\in\Z}\int_0^\infty\beta_j(\rho)\beta_{j}(\rho)[\mathcal{H}_{\nu}f](\rho)[\mathcal{H}_{\nu}\overline{g}](\rho)\mathrm{d}\omega(\rho).
\end{split}
\end{equation*}
This implies that
\begin{equation}\label{6.5}
\begin{split}
\int_0^\infty f(r)\overline{g}(r)\mathrm{d}\omega(r)\leq
C\sum_{j\in\Z}\int_0^\infty [L_jf](r)[L_jg](r)\mathrm{d}\omega(r).
\end{split}
\end{equation}
Hence by Lemma \ref{square}, we obtain
\begin{equation}
\begin{split} \|f\|_{L^p(\omega)}&\leq
C\sup_{\|g\|_{L^{p'}(\omega)}\leq1}\big\|\big(\sum_{j\in\Z}|{L}_jf|^2\big)^{\frac12}\big\|_{L^p(\omega)}\big\|\big(\sum_{j\in\Z}|{L}_jg|^2\big)^{\frac12}\big\|_{L^{p'}(\omega)}
\\&\leq C\big\|\big(\sum_{j\in\Z}|{L}_jf|^2\big)^{\frac12}\big\|_{L^p(\omega)}.
\end{split}
\end{equation}
This together with \eqref{6.4} gives \eqref{6.3}. When $p=2$, we
have by Lemma \ref{Hankel3}
\begin{equation*}
\begin{split}
&\Big\|\big(\sum_{j\in\Z}|{L}_jf|^2\big)^{\frac12}\Big\|^2_{L^2(\omega)}=\sum_{j\in\Z}\big\|{L}_jf\big\|^2_{L^2(\omega)}
=\int_0^\infty\sum_{j\in\Z}|\beta_j(\rho)|^2|\mathcal{H}_{\nu}f|^2\mathrm{d}\omega(\rho)\leq
C\|f\|_{L^2(\omega)}.
\end{split}
\end{equation*}
Define the operator $S(f)$ by $f~\mapsto~\{L_j f\}_{j\in\Z}$, then
$\|S(f)\|_{L^2(\omega;\ell^2(\Z))}\leq C\|f\|_{L^2(\omega)}$.

To show \eqref{6.4}, it suffices to prove
\begin{equation}\label{6.7}
\begin{split}
\|S(f)\|_{L^{1,\infty}(\omega;\ell^2(\Z))}\leq C\|f\|_{L^1(\omega)},
\end{split}
\end{equation}
where $L^{1,\infty}(\omega)$ denotes the weak-$L^1(\omega)$. Define
the generalized convolution $f\# g$ as
\begin{equation}\label{6.8}
\begin{split}
f\# g(x)=\int_0^\infty(\tau_xf)(y)g(y)\mathrm{d}\omega(y), \quad
x\in \R^+,
\end{split}
\end{equation}
where $f,g\in L^1(\omega)$, the Hankel translation $\tau_xf$ is
denoted to be
\begin{equation}\label{6.9}
\begin{split}
(\tau_xf)(y)=\int_0^\infty K_{\nu}(x,y,z)f(z)\mathrm{d}\omega(z),
\quad x,y\in \R^+,
\end{split}
\end{equation}
and
\begin{equation}\label{6.10}
\begin{split}
K_{\nu}(x,y,z)=\int_0^\infty(xt)^{-\frac{n-2}2}J_{\nu}(xt)(yt)^{-\frac{n-2}2}J_{\nu}(yt)(zt)^{-\frac{n-2}2}J_{\nu}(zt)\mathrm{d}\omega(t),
\quad x,y,z\in \R^+.
\end{split}
\end{equation}
Then $\mathcal{H}_{\nu}[f\#
g]=\mathcal{H}_{\nu}(f)\mathcal{H}_{\nu}(g)$. Moreover, we have
$L_jf=k_j\# f$ with $k_j=\mathcal{H}_{\nu}(\beta_j)$. Taking into
account the fact that $(\tau_x f)(y)=(\tau_y f)(x)$ and  Theorem 2.4
in \cite{CW}, it suffices to prove the Hankel version of the
well-known H\"ormander condition
\begin{equation*}
\begin{split}
\int_{|x-y_0|>2|y-y_0|}\big(\sum_{j\in\Z}\big|\tau_{y}k_j(x)-\tau_{y_0}k_j(x)\big|^2\big)^{\frac12}\mathrm{d}\omega(x)\leq
C,
\end{split}
\end{equation*}
where $C$ is independent of $y, y_0$. This is implied by
\begin{equation*}
\begin{split}
\sum_{j\in\Z}\int_{|x-y_0|>2|y-y_0|}\big|\tau_{y}k_j(x)-\tau_{y_0}k_j(x)\big|\mathrm{d}\omega(x)\leq
C,
\end{split}
\end{equation*}
which can be proved by the arguments in \cite{BM,GS}. \vskip0.5cm

{\bf Acknowledgments:}\quad  The authors would like to express their
gratitude to Professor S. Shao for his helpful discussions and
 the anonymous referee
 for their invaluable comments and
suggestions. The authors were partly supported by the NSF of China
(No.11171033, No.11231006) and by Beijing Center of Mathematics and
Information Science.


\end{document}